\newcommand{\blockmatrix}[3]{\begin{minipage}[t][#2][c]{#1}\center#3\end{minipage}}
\newtheorem{thm}{Theorem}[section]
\newtheorem*{thm*}{Theorem}
\newtheorem{cor}[thm]{Corollary}
\newtheorem{lem}[thm]{Lemma}
\newtheorem{prop}[thm]{Proposition}
\newtheorem*{prop*}{Proposition}
\newtheorem*{conj*}{Conjecture}
\newtheorem*{dfn*}{Definition}
\theoremstyle{definition}
\newtheorem{rem}[thm]{\textbf{Remark}}
\newtheorem*{rmk*}{Remark}
\newtheorem*{fact*}{Fact}
\theoremstyle{proof}
\newcommand{\norm}[1]{\left\Vert#1\right\Vert}
\newcommand{\abs}[1]{\left\vert#1\right\vert}
\newcommand{\set}[1]{\left\{#1\right\}}
\newcommand{\brac}[1]{\left(#1\right)}
\newcommand{\scalar}[1]{\left \langle #1 \right \rangle}
\newcommand{\Real}{\mathbb{R}}
\newcommand{\eps}{\varepsilon}
\newcommand{\dist}{\textrm{dist}}
\newcommand{\vol}{\textrm{Vol}}
\newcommand{\Var}{\mathbb V\textrm{ar}}
\newcommand{\Cov}{\mathbb C\textrm{ov}}
\newcommand{\Ent}{\mathbb E\textrm{nt}}
\newcommand{\E}{\mathbb E}
\renewcommand{\P}{\mathbb P}
\newcommand{\n}{\eta}
\newlength{\defbaselineskip}
\numberwithin{equation}{section}
\begin{document}

\title{Interpolating Thin-Shell and Sharp Large-Deviation Estimates For Isotropic Log-Concave Measures}
\author{Olivier Gu\'edon\textsuperscript{1} and Emanuel Milman\textsuperscript{2}}

\date{}

\footnotetext[1]{
Universit\'e Paris-Est Marne La Vall\'ee,
Laboratoire d'Analyse et de Math\'ematiques Appliqu\'ees.
5, Bd Descartes, Champs sur Marne 77454, Marne La Vall\'ee, C\'edex 2, France. Email: olivier.guedon@univ-mlv.fr.}

\footnotetext[2]{Department of Mathematics,
Technion - Israel Institute of Technology, Haifa 32000, Israel. Supported by ISF and the Taub Foundation (Landau Fellow).
Email: emilman@tx.technion.ac.il.}

\maketitle

\begin{abstract}
Given an isotropic random vector $X$ with log-concave density in Euclidean space $\Real^n$, we study the concentration properties of $|X|$ on all scales, both above and below its expectation. We show in particular that:
\[
\P(\abs{|X| -\sqrt{n}} \geq t \sqrt{n}) \leq C \exp(-c n^{\frac{1}{2}} \min(t^3,t)) \;\;\; \forall t \geq 0 ~,
\]
for some universal constants $c,C>0$. This improves the best known deviation results on the thin-shell and mesoscopic scales due to Fleury and Klartag, respectively, and recovers the sharp large-deviation estimate
of Paouris. Another new feature of our estimate is that it improves when $X$ is $\psi_\alpha$ ($\alpha \in (1,2]$), in precise agreement with Paouris' estimates. The upper bound on the thin-shell width $\sqrt{\Var(|X|)}$ we obtain is of the order of $n^{1/3}$, and improves down to $n^{1/4}$ when $X$ is $\psi_2$. Our estimates thus continuously interpolate between a new best known thin-shell estimate and the sharp large-deviation estimate of Paouris.
As a consequence, a new best known bound on the Cheeger isoperimetric constant appearing in a conjecture of Kannan--Lov\'asz--Simonovits is deduced.
\end{abstract}

\section{Introduction}

Let a Euclidean norm $\abs{\cdot}$ on $\Real^n$ be fixed. This work is dedicated to quantitative concentration properties of $|X|$, where $X$ is an isotropic random vector in $\Real^n$ with log-concave density. Recall that a random vector $X$ in $\Real^n$ (and its density) is called isotropic if $\E X = 0$ and $\E X \otimes X = Id$, i.e. its barycenter is at the origin and its covariance matrix is equal to the identity one. For such an $X$, if $A \in M_{n}(\Real)$ denotes an $n$ by $n$ matrix, observe that $\E |A X|^2 = \norm{A}_{HS}^2$, where $\norm{A}_{HS} = \sqrt{\sum_{i,j} A_{i,j}^2}$ denotes the Hilbert--Schmidt norm of $A$.
Here and throughout we use $\E$ to denote expectation, $\P$ to denote probability, and $\Var$ to denote variance. A function $g: \Real^n \rightarrow \Real_+$ is called log-concave if $-\log g : \Real^n \rightarrow \Real \cup \set{+\infty}$ is convex. Throughout this work, $C$,$c$,$c_2$,$C'$, etc. denote universal positive numeric constants, independent of any other parameter and in particular the dimension $n$, whose value may change from one occurrence to the next.

It was conjectured by Anttila, Ball and Perissinaki \cite{ABP} that $|X|$ is concentrated around its expectation significantly more than suggested by the trivial bound $\Var|X| \leq \E|X|^2 = n$. Namely, they conjectured that  there exists a sequence $\set{\eps_n}$ decreasing to $0$ with the dimension $n$, so that $X$ is concentrated within a ``thin shell'' of relative width $2 \eps_n$ around the (approximately) expected Euclidean norm of $\sqrt{n}$:
\begin{equation} \label{eq:ABP-conc}
\P(\abs{|X| - \sqrt{n}} \geq \eps_n \sqrt{n}) \leq \eps_n ~.
\end{equation}
Their conjecture was mainly motivated by the Central Limit Problem for log-concave measures, and as pointed out in \cite{ABP}, implies that most marginals of log-concave measures are approximately Gaussian.

A stronger version of this conjecture was put forth by Bobkov and Koldobsky \cite{BobkovKoldobsky}. It may be equivalently formulated as stating that the ``thin-shell width" $\sqrt{\Var|X|}$ is bounded above by a universal constant $C$.

An even stronger conjecture is due to Kannan, Lov\'asz and Simonovits \cite{KLS}. In an equivalent form, it states that for any smooth function $f : \Real^n \rightarrow \Real$:
\[
\Var(f(X)) \leq C \E|\nabla f(X)|^2 ~.
\]
Applied to the function $f(x) = |x|^p$ with $p = c \sqrt{n}$, the KLS conjecture implies (see \cite{FleuryOnVarianceConjecture} and Section \ref{sec:deviation}) that:
\begin{equation} \label{eq:KLS-conc}
\P(\abs{|X| - \sqrt{n}} \geq t \sqrt{n}) \leq C \exp(-c \sqrt{n} t) \;\;\; \forall t \geq 0 ~.
\end{equation}

It was shown by G. Paouris \cite{Paouris-IsotropicTail} that the predicted positive deviation estimate (\ref{eq:KLS-conc}) indeed holds in the large:
\begin{equation} \label{eq:Paouris-large-dev-gen}
\P(|X| \geq (1+t) \sqrt{n}) \leq \exp(-c \sqrt{n} t) \;\;\; \forall t \geq C > 0 ~.
\end{equation}
Moreover, Paouris showed that when $A \in M_{n}(\Real)$ with $\norm{A}_{HS}^2 = n$, and $X$ is $\psi_\alpha$ ($\alpha \in [1,2]$) with constant $b_\alpha > 0$, then:
\begin{equation} \label{eq:Paouris-large-dev-alpha}
\P(|A X| \geq (1+t) \sqrt{n}) \leq \exp(-c (n/(b_\alpha^2 \norm{A}_{op}^2))^{\frac{\alpha}{2}} t) \;\;\; \forall t \geq C > 0 ~.
\end{equation}
Here $\norm{A}_{op}$ denotes the operator norm of $A$. Recall that $X$ (and its density) is said to be ``$\psi_\alpha$ with constant $b_\alpha$'' if:
\[
 \brac{\E \abs{\scalar{X,y}}^p}^{1/p} \leq b_\alpha p^{1/\alpha} \brac{\E \abs{\scalar{X,y}}^2}^{1/2} \;\;\; \forall p \geq 2 \;\;\; \forall y \in \Real^n ~.
\]
Note that this definition is linearly invariant and that necessarily $b_\alpha \geq 2^{-1/\alpha}$. We will simply say that ``$X$ is $\psi_\alpha$'', if it is $\psi_\alpha$ with a universal positive constant $C$.
By a result of Berwald \cite{BerwaldMomentComparison} or by Borell's Lemma \cite{Borell-logconcave} (see \cite[Appendix III]{Milman-Schechtman-Book}), it is well known that any $X$ with log-concave density is $\psi_1$ with $b_1 \leq C$, some universal constant, and so we only gain additional information when $\alpha > 1$.

Subsequently, it was shown by Paouris \cite{PaourisSmallBall} that under the same assumptions, the following small-ball estimate, analogous to the large deviation one (\ref{eq:Paouris-large-dev-alpha}), also holds:
\begin{equation} \label{eq:Paouris-small-ball}
\P(|A X| \leq \eps \sqrt{n}) \leq (C \eps)^{c (n/(b_\alpha^2 \norm{A}_{op}^2))^{\frac{\alpha}{2}}} \;\;\; \forall \eps \in (0,1/C) ~,
\end{equation}
for some constant $C > 1$.

The positive large-deviation estimate (\ref{eq:Paouris-large-dev-alpha}) is easily verified to be sharp (up to universal constants) for all $\alpha \in [1,2]$. The sharpness of (\ref{eq:Paouris-small-ball}) is not known, and in fact is intimately related to the Slicing Problem (see \cite{DafnisPaouris}). In any case, this leaves open the concentration estimates in the bulk: positive deviation $\P(|X| \geq (1+t) \sqrt{n})$ when $t \in [0,C]$, and negative deviation $\P(|X| \leq (1-t) \sqrt{n})$ when $t \in [0,c]$ ($c \in (0,1)$); in particular, this gives no information on the thin-shell $\sqrt{\Var{|X|}}$.

\medskip

In a breakthrough work, the first non-trivial estimate on the concentration of $|X|$ around its expectation was given by B. Klartag in \cite{KlartagCLP}, involving delicate logarithmic improvements in $n$ over the trivial bounds. This validated the conjectured thin-shell concentration (\ref{eq:ABP-conc}), allowing Klartag to resolve the Central Limit Problem for log-concave measures. A different proof continuing Paouris' approach was given by Fleury, Gu\'edon and Paouris in \cite{FleuryGuedonPaourisCLP}. Klartag then improved in \cite{KlartagCLPpolynomial} his estimates from logarithmic to polynomial in $n$ as follows (for any small $\eps > 0$):
\begin{equation} \label{eq:Klartag-deviation}
\P(\abs{|X| - \sqrt{n}} \geq t \sqrt{n}) \leq C_\eps \exp(-c_\eps n^{\frac{1}{3}-\eps} t^{\frac{10}{3}-\eps}) \;\;\; \forall t \in [0,1] ~.
\end{equation}
This implies in particular a thin-shell estimate of:
\[
\sqrt{\Var{|X|}} \leq C_\eps n^{\frac{1}{2} - \frac{1}{10} + \eps} ~.
\]
Note, however, that when $t=1/2$, (\ref{eq:Klartag-deviation}) does not recover the sharp positive large-deviation estimate of Paouris (\ref{eq:Paouris-large-dev-gen}).

Recently in \cite{FleuryImprovedThinShell}, B. Fleury improved Klartag's thin-shell estimate to:
\[
\sqrt{\Var{|X|}} \leq C n^{\frac{1}{2} - \frac{1}{8}} ~,
\]
by obtaining the following deviation estimates:
\[
\P(|X| \geq (1+t) \sqrt{n}) \leq C \exp(-c n^{\frac{1}{4}} t^2) \;\;\; \forall t \in [0,1] ~;
\]
\[
\P(|X| \leq (1-t) \sqrt{n}) \leq C \exp(-c n^{\frac{1}{8}} t) \;\;\; \forall t \in [0,1] ~.
\]
Note, however, that when $t=1/2$, Fleury's positive and negative large-deviation estimates are both inferior to those of Klartag, and so in the mesoscopic scale $t = n^{-\delta}$ ($\delta > 0$ small), Klartag's estimates still outperform Fleury's (and Paouris' ones are inapplicable). In addition, note that both Klartag and Fleury's estimates do not seem to improve under a $\psi_\alpha$ condition, contrary to the ones of Paouris. See also \cite{Bobkov-Nazarov,KlartagUnconditionalVariance,FleuryOnVarianceConjecture,EMilman-Gaussian-Marginals} for further related results.

\medskip

All of this suggests that one might hope for a concentration estimate which:
\begin{itemize}
\item Recovers Paouris' sharp positive large-deviation estimate (\ref{eq:Paouris-large-dev-alpha}).
\item Improves if $X$ is $\psi_\alpha$.
\item Improves the best-known thin-shell estimate of Fleury.
\item Improves the best-known mesoscopic-deviation estimate of Klartag.
\item Interpolates continuously between all scales of $t$ (bulk, mesoscopic, large-deviation).
\end{itemize}
The aim of this work is to provide precisely such an estimate.

\subsection{The Results}

Following Paouris, we formulate our main results in greater generality, allowing an application of a linear transformation to $X$.

\begin{thm} \label{thm:intro-deviation}
Let $X$ denote an isotropic random vector in $\Real^n$ with log-concave density, which is in addition $\psi_\alpha$ ($\alpha \in [1,2]$) with constant $b_\alpha$, and let $A \in M_{n}(\Real)$ satisfy $\norm{A}_{HS}^2 = n$. Then:
\begin{equation} \label{eq:deviation}
\P(\abs{|A X| - \sqrt{n}} \geq t \sqrt{n}) \leq C \exp(-c \n^{\frac{\alpha}{2}} \min(t^{2+\alpha},t)) \;\;\; \forall t \geq 0 ~,
\end{equation}
where:
\begin{equation} \label{eq:n0-def}
\n := \frac{n}{\norm{A}_{op}^2 b_\alpha^2} ~.
\end{equation}
In particular, we obtain the following thin-shell estimate:
\begin{equation} \label{eq:thin-shell}
\sqrt{\Var(|A X|)} \leq C n^{\frac{1}{2}} \n^{-\frac{\alpha}{2(2+\alpha)}} ~.
\end{equation}
\end{thm}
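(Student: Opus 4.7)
My plan is to reduce the deviation estimate \eqref{eq:deviation} to sharp two-sided control of the moment function $\Phi(p) := (\E\abs{AX}^p)^{1/p}$. Since the isotropicity of $X$ combined with $\norm{A}_{HS}^2 = n$ gives $\Phi(2) = \sqrt{n}$, and since Paouris' estimate \eqref{eq:Paouris-large-dev-alpha} forces $\Phi(p) \asymp \sqrt{n}$ as long as $p \leq c\,\n^{\alpha/2}$, the natural target is an interpolating bound
\[
\Phi(p) \leq \sqrt{n}\left(1 + C\,(p/\n^{\alpha/2})^{1/(1+\alpha)}\right), \qquad 2 \leq p \leq c\,\n^{\alpha/2},
\]
together with an analogous lower bound on the negative-moment counterpart $\Phi(-p) := (\E \abs{AX}^{-p})^{-1/p}$.

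Given such a bound, the upper tail would follow from Markov applied at an optimized $p$: writing
\[
\P(\abs{AX} \geq (1+t)\sqrt{n}) \leq \bigl(\Phi(p)/((1+t)\sqrt{n})\bigr)^p \leq \exp\!\bigl(Cp\,(p/\n^{\alpha/2})^{1/(1+\alpha)} - pt\bigr),
\]
choosing $p^* \sim t^{1+\alpha}\n^{\alpha/2}$ yields precisely $\exp(-c\,\n^{\alpha/2}\,t^{2+\alpha})$ for $t \in [0,c]$; for $t > c$ one defaults to $p^* \sim \n^{\alpha/2}$ and recovers the linear exponent $\exp(-c\,\n^{\alpha/2}\,t)$ from \eqref{eq:Paouris-large-dev-alpha}. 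The lower tail is handled symmetrically using $\Phi(-p)$, with Paouris' small-ball estimate \eqref{eq:Paouris-small-ball} covering $t$ near $1$. The thin-shell bound \eqref{eq:thin-shell} then drops out of the layer-cake identity $\Var\abs{AX} \leq \int_0^\infty 2u\,\P(\abs{\abs{AX}-\sqrt{n}} \geq u)\,du$: substituting $u = t\sqrt{n}$, a direct change of variables $s = \n^{\alpha/2}t^{2+\alpha}$ evaluates the dominant small-$t$ contribution to $\sim n\,\n^{-\alpha/(2+\alpha)}$.

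The real work is thus the interpolating moment estimate in the range $p \in [2, c\,\n^{\alpha/2}]$. I would differentiate $\log \Phi(p)$ in $p$, aiming at an ordinary differential inequality whose integration from $p=2$ gives the claimed growth rate. The $\psi_\alpha$ hypothesis should enter through the marginal bound $(\E \abs{\scalar{AX,\theta}}^q)^{1/q} \leq b_\alpha \norm{A}_{op}\,q^{1/\alpha}$ for $q \geq 2$, and be funneled into $\Phi(p)$ itself via Paouris' $Z_q$-body technology: one thinks of $\Phi(p)$ as the $L^p$-norm of the support function $h_{Z_p}$ against the uniform measure on the sphere, and bounds this using moment estimates on marginals of $X$.

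The main obstacle I foresee is producing the \emph{sharp} exponent $1/(1+\alpha)$, which for $\alpha = 1$ improves Fleury's and Klartag's rates and for $\alpha = 2$ yields the new $n^{1/4}$ thin-shell width. My guess is that neither an upper-moment nor a negative-moment argument alone suffices: one has to couple the two, using the Paouris small-ball estimate \eqref{eq:Paouris-small-ball} to certify a matching \emph{lower} bound on $\Phi(-p)$, and then use duality between $\Phi(p)$ and $\Phi(-p)$ (via Hölder or log-convexity of $p \mapsto \log \E \abs{AX}^p$) to close a self-improving loop that a one-sided estimate cannot close. Extracting the $\psi_\alpha$-improvement, as opposed to the bare log-concave case, is precisely what seems to force this interpolation mechanism into the proof.
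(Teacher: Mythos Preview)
Your outer shell is correct and matches the paper exactly: Theorem~\ref{thm:intro-deviation} is deduced from the moment estimate
\[
\left(\E|AX|^p\right)^{1/p} \leq \sqrt{n}\left(1 + C\left(\frac{|p-2|}{\n^{\alpha/2}}\right)^{1/(\alpha+1)}\right), \qquad |p-2| \leq c\,\n^{\alpha/2},
\]
via Markov at an optimized $p$, with the linear-in-$t$ regime handled by Borell's lemma and the lower tail patched by Paouris' small-ball estimate; the thin-shell bound then follows by integrating the tail. The paper does precisely this in Section~\ref{sec:deviation}.

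The gap is in your mechanism for the moment estimate itself. You correctly identify that one should differentiate $p \mapsto \log\Phi(p)$, but your proposed engine---a duality or ``self-improving loop'' between $\Phi(p)$ and $\Phi(-p)$, closed with the help of Paouris' small-ball estimate---is not how the sharp exponent $1/(\alpha+1)$ is obtained, and I do not see how to make such a loop produce it. Log-convexity of $p \mapsto \log\E|AX|^p$ only gives $\Phi(-p)\leq\Phi(2)\leq\Phi(p)$; it does not transfer quantitative information from one side to the other, and the small-ball estimate \eqref{eq:Paouris-small-ball} is far too crude near $t=0$ to feed back into a thin-shell-scale bound.

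The paper's actual route is: (i) replace $AX$ by $Y = (AX + G_n)/\sqrt{2}$; (ii) for each $p$, average $|P_F Y|^p$ over a random $k$-dimensional subspace $F$, rewriting $\E|Y|^p$ as $\E_U h_{k,p}(U)$ on $SO(n)$; (iii) differentiate in $p$ and decompose the resulting entropy as $\Ent_{\mu_{k,p}}(t^p) = \E_U \Ent_{\mu_U}(t^p) + \Ent_U h_{k,p}(U)$, controlling the first piece by Borell's concavity \eqref{eq:Borell-concave} and the second by the log-Sobolev inequality \eqref{eq:log-Sob} on $SO(n)$ together with a log-Lipschitz bound $L_{k,p}$ on $h_{k,p}$; (iv) optimize over the free parameter $k$ \emph{separately for each $p$}. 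The $\psi_\alpha$ hypothesis enters only through step (iii): the key new ingredient is the geometric estimate $L_{k,p} \leq C\,\norm{A}_{op} b_\alpha \max(k,p)^{1/\alpha + 1/2}$ (Theorem~\ref{thm:intro-Lip}), proved via Ball's bodies $K_q$ and one-sided centroid bodies $Z_q^+$, with the Gaussian convolution supplying the lower bound $Z_q^+(g) \supset c\sqrt{q}\,B_2^n$ that makes the distance to the Euclidean ball small. Your proposal contains none of these four ingredients, and in particular misses both the convolution step and the projection-to-dimension-$k$ structure that creates the parameter over which one optimizes.
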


For concreteness and future reference, we state again the deviation estimates above and below the expectation separately: the constant $C$ in (\ref{eq:deviation}) may actually be removed in the former estimate:
\begin{equation} \label{eq:large-deviation}
\P(|A X| \geq (1+t) \sqrt{n}) \leq \exp(-c \n^{\frac{\alpha}{2}} \min(t^{2+\alpha},t)) \;\;\; \forall t \geq 0 ~;
\end{equation}
and combining our estimate (\ref{eq:deviation}) with Paouris' small-ball estimate (\ref{eq:Paouris-small-ball}), we obtain for the latter:
\begin{equation} \label{eq:small-ball}
\P(|A X| \leq (1-t) \sqrt{n}) \leq C \exp(-c \n^{\frac{\alpha}{2}} \max(t^{2+\alpha},\log\frac{c_2}{1-t}) ) \;\;\; \forall t \in [0,1] ~.
\end{equation}

Applying Theorem \ref{thm:intro-deviation} with $\alpha=1$ and $A = Id$, we obtain that for any isotropic $X$ with log-concave density, the above estimates hold with $\n \geq c n$, and in particular we deduce the following improved thin-shell estimate:
\begin{equation} \label{eq:improved-thin-shell}
\sqrt{\Var(|X|)} \leq C n^{\frac{1}{2} - \frac{1}{6}} ~.
\end{equation}
Also note that (\ref{eq:large-deviation}) recovers (up to constants) Paouris' sharp large-deviation estimate (\ref{eq:Paouris-large-dev-alpha}). Moreover, we obtain $\P(|A X| \geq (1+t) \sqrt{n}) \leq \exp(-C_t \n^{\frac{\alpha}{2}})$ and $\P(|A X| \leq \eps \sqrt{n}) \leq C' \exp(-C_\eps \n^{\frac{\alpha}{2}})$
for \emph{any} $t > 0$ and $\eps \in (0,1)$, whereas the estimates (\ref{eq:Paouris-large-dev-alpha}) and (\ref{eq:Paouris-small-ball}) only ensure that this holds for $t \geq C$ and $\eps \in (0,1/C)$, for some \emph{large enough} $C > 0$. It is also possible to recover Paouris' small-ball estimate (\ref{eq:Paouris-small-ball}), but this seems to require additional justification, which we leave for another note.

\medskip

Theorem \ref{thm:intro-deviation} is a standard consequence of (and essentially equivalent to) the following moment estimates, which are the main result of this work:

\begin{thm} \label{thm:intro-moments}
With the same assumptions and notation as in Theorem \ref{thm:intro-deviation},
for any $1 \leq \abs{p-2} \leq c_1 \n^{\frac{\alpha}{2(\alpha+2)}}$:
\begin{equation} \label{eq:small-moments}
1 - C \frac{\abs{p-2}}{\n^{\frac{\alpha}{\alpha+2}}} \leq \frac{\brac{\E|A X|^p}^{\frac{1}{p}}}{\brac{\E|A X|^2}^{\frac{1}{2}}} \leq 1 + C \frac{\abs{p-2}}{\n^{\frac{\alpha}{\alpha+2}}} ~,
\end{equation}
and for any $c_1 \n^{\frac{\alpha}{2(\alpha+2)}} \leq \abs{p-2} \leq c_2 \n^{\frac{\alpha}{2}}$:
\begin{equation} \label{eq:moments}
1 - C \brac{\frac{\abs{p-2}}{\n^{\frac{\alpha}{2}}}}^{\frac{1}{\alpha+1}} \leq \frac{\brac{\E|A X|^p}^{\frac{1}{p}}}{\brac{\E|A X|^2}^{\frac{1}{2}}} \leq 1 + C \brac{\frac{\abs{p-2}}{\n^{\frac{\alpha}{2}}}}^{\frac{1}{\alpha+1}} ~.
\end{equation}
\end{thm}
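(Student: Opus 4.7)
Write $Z_p := (\E |AX|^p)^{1/p}$ and $y(p) := Z_p/Z_2$; by isotropy and the normalization $\norm{A}_{HS}^2 = n$ we have $Z_2 = \sqrt{n}$, so the claim amounts to
\[
|y(p)-1| \leq C\min\!\brac{\frac{|p-2|}{\n^{\alpha/(\alpha+2)}},\; \brac{\frac{|p-2|}{\n^{\alpha/2}}}^{1/(\alpha+1)}}
\]
on the stated ranges. The two bounds agree precisely at the crossover $|p-2|\sim \n^{\alpha/(2(\alpha+2))}$, which strongly suggests that the whole theorem should fall out of a single differential inequality for $y(p)$ whose regime changes at that scale.

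The standard entry point, used in this form by Paouris and by Fleury--Gu\'edon--Paouris, is the identity
\[
(\log Z_p)' \;=\; \frac{1}{p^2\,\E|AX|^p}\,\Ent_\mu\brac{|AX|^p} \;=\; \frac{1}{p^2}\, H(\nu_p \,\|\, \mu),
\]
where $\nu_p$ is the tilted measure with density proportional to $|AX|^p$ w.r.t.\ the law $\mu$ of $X$. My plan is to bound the right-hand side by an expression of the schematic form $C\, p^{2/\alpha-2}\, G(y(p))\,/\, \n$, producing a closed ODE for $y$. The factor $\n^{-1/2}$ should come from the support function estimate for the $L^p$-centroid body under the $\psi_\alpha$ hypothesis, namely $h_{Z_p(AX)}(\theta)\leq C\, b_\alpha\, p^{1/\alpha}\, \norm{A}_{op}\,|\theta|$, combined with the isotropic reference $h_{Z_2(AX)}(\theta)=|A^T\theta|$. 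The decisive object is $G(y)$: near $y=1$ it should be $O(1)$ (controlled by a thin-shell/variance input refining Klartag--Fleury), producing after integration the linear bound $y-1\lesssim |p-2|/\n^{\alpha/(\alpha+2)}$ once one bootstraps from $p=2$ using $|p-2|\geq 1$ to absorb constants; for $y$ away from $1$ it should grow like $y^\alpha$, turning the inequality into $y^\alpha y'(p)\lesssim \n^{-\alpha/2}$, whose solution is exactly $y-1\sim (|p-2|/\n^{\alpha/2})^{1/(\alpha+1)}$. The two regimes are then matched at the crossover scale. The sub-case $p<2$ requires an additional ingredient, since the entropy method is one-sided: the lower bound on $y(p)$ must be obtained from an upper bound on $Z_q$ ($q\geq 2$) via a reverse H\"older inequality coming from log-concavity (Berwald/Borell).

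The main obstacle I anticipate is producing the correct self-improving dependence $G(y)\sim y^\alpha$ in the large-$y$ regime --- this is exactly what Klartag's and Fleury's arguments failed to capture, which is why neither recovered Paouris' sharp estimate (\ref{eq:Paouris-large-dev-alpha}). Obtaining it will likely require a refinement of Fleury's covariance estimate for $X\otimes X - \mathrm{Id}$, localised to the tail event $\set{|AX|\geq y\sqrt{n}}$, so that the $\psi_\alpha$ strength is felt uniformly across all scales of $p$ rather than only in the large-$p$ limit; plugging in Paouris' small-ball inequality (\ref{eq:Paouris-small-ball}) as auxiliary input seems natural here. A secondary subtle point is that the implicit constants in the near-$p=2$ bootstrap must not deteriorate when passing through the crossover, which forces the argument to carry both regimes simultaneously rather than gluing two separate proofs.
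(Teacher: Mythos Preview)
Your proposal identifies the right object --- the derivative $(\log Z_p)' = p^{-2}\,\Ent_\mu(|AX|^p)/\E|AX|^p$ --- but it has no concrete mechanism for bounding this entropy, and the speculation about a self-improving function $G(y)$ governed by a localized covariance estimate is not how the argument actually closes. The paper's proof does not work directly with $AX$ at all: it first replaces $AX$ by $Y=(AX+G_n)/\sqrt{2}$, then projects $Y$ onto a random $k$-dimensional subspace and rewrites $\E_{F,Y}|P_F Y|^p$ as $\E_U h_{k,p}(U)$ over $SO(n)$. The entropy of $t^p$ under the resulting measure is then decomposed into a fiberwise part (controlled by Borell's log-concavity of $q\mapsto\log(\int t^q w/\Gamma(q+1))$, which costs $O(1/k)$) and an $SO(n)$-entropy of $h_{k,p}$ (controlled by the log-Sobolev inequality on $SO(n)$, which costs $O(L_{k,p}^2/(p^2 n))$). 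The decisive new ingredient --- completely absent from your sketch --- is the geometric log-Lipschitz bound $L_{k,p}\leq C b_\alpha\norm{A}_{op}\max(k,p)^{1/\alpha+1/2}$, proved via Ball's $K_q$-bodies and one-sided centroid bodies $Z_q^+$. Plugging this in gives
\[
\frac{d}{dp}\log(\E|Y|^p)^{1/p}\leq C\brac{\frac{k^{1+2/\alpha}}{p^2\n}+\frac{1}{k}},
\]
and \emph{optimizing the projection dimension $k$ separately for each $p$} (at $k\sim |p|^{\alpha/(\alpha+1)}\n^{\alpha/(2(\alpha+1))}$) yields the bound $(\log Z_p)'\lesssim |p|^{-1/\beta}\n^{-1/(2\beta)}$ with $\beta=1+1/\alpha$, which integrates to \eqref{eq:moments}. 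The transfer back from $Y$ to $AX$, and the sharper estimate \eqref{eq:small-moments} for small $|p-2|$, are then obtained from the deviation inequalities (with Paouris' small-ball estimate invoked for negative moments).

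In short: the crossover you noticed is real, but it arises from optimizing an auxiliary \emph{dimension} parameter $k$ in a projected problem on $SO(n)$, not from a phase transition in a closed ODE for $y(p)$. Your proposed route --- bounding $\Ent_\mu(|AX|^p)$ directly via some tail-localized covariance refinement --- has no known implementation that yields the correct $\psi_\alpha$-dependence, and the paper makes no use of such an estimate.
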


More precisely, we first derive a refined version of Theorem \ref{thm:intro-moments} with $A X$ replaced by $Y = (A X + G_n)/\sqrt{2}$, where $G_n$ denotes an independent standard Gaussian random vector in $\Real^n$. From this version, we derive the deviation estimates (\ref{eq:large-deviation}) and (\ref{eq:small-ball}) for $Y$ directly. Theorem \ref{thm:intro-deviation} for $A X$ then easily follows, but to deduce back the \emph{negative} moment estimates in (\ref{eq:moments}) for $A X$ up to $-p = c_2 \n^{\frac{\alpha}{2}}$ (or equivalently, the \emph{negative} deviation estimate (\ref{eq:small-ball})), we elude to the small-ball estimate (\ref{eq:Paouris-small-ball}). We remark that the lower bound $\abs{p-2} \geq 1$ in Theorem \ref{thm:intro-moments} may be replaced by any positive constant, leading to a different constant $C>0$ in the conclusion, and that as usual, the $L_0$-norm is interpreted as $\exp(\E \log |A X|)$.

\begin{rem}
Our choice to present the results assuming that $\norm{A}_{HS}^2 = n$ is purely for aesthetic reasons, facilitating the comparison to the previously known results.
Indeed, we can obviously remove this assumption by scaling $X$, and state all of our deviation estimates around (and relative to) the expected value $(\E |AX|^2)^{1/2} = \norm{A}_{HS}$ instead of $\sqrt{n}$. This leads to the following scale-invariant definition of $\n$ as $\n := \norm{A}_{HS}^2/(b_\alpha^2 \norm{A}_{op}^2)$, which naturally also appears in the work of Paouris \cite{Paouris-IsotropicTail,PaourisSmallBall}.
\end{rem}

Let us finally mention that by a standard application of a remarkable theorem due to Bobkov \cite{BobkovVarianceBound}, we improve the best-known general bound on the Cheeger constant $D_{Che}(\mu)$ of a probability measure $\mu$ in $\Real^n$ with isotropic log-concave density (we refer to \cite{BobkovVarianceBound,EMilman-RoleOfConvexity} for missing definitions and background). Bobkov's theorem states that for such measures $D_{Che}(\mu)^2 \geq c / (\E |X| \sqrt{\Var |X|})$ (where $X$ is distributed according to $\mu$), and so our improved thin-shell estimate (\ref{eq:improved-thin-shell}) implies:
\begin{cor}
Let $\mu$ denote a probability measure in $\Real^n$ with isotropic log-concave density. Then $D_{Che}(\mu) \geq c n^{-\frac{5}{12}}$.
\end{cor}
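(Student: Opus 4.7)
The plan is essentially a one-line combination of three ingredients, all of which are either already established in the paper or explicitly cited from the literature. The proof is routine; the only things to verify are the two numerical bounds that feed into Bobkov's inequality.

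First, I would invoke the stated theorem of Bobkov from \cite{BobkovVarianceBound}, namely that for any isotropic log-concave probability measure $\mu$ on $\Real^n$ with associated random vector $X$,
\[
D_{Che}(\mu)^2 \geq \frac{c}{\E|X| \, \sqrt{\Var|X|}} ~.
\]
So the task reduces to producing upper bounds on each of the two factors in the denominator.

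For the first factor, I would use the trivial bound following from Jensen's inequality and isotropicity: $\E|X| \leq (\E|X|^2)^{1/2} = \sqrt{n}$. For the second factor, I would plug in the improved thin-shell estimate (\ref{eq:improved-thin-shell}) established earlier in the introduction via Theorem \ref{thm:intro-deviation} applied with $\alpha=1$, $A=Id$, giving
\[
\sqrt{\Var|X|} \leq C n^{\frac{1}{2} - \frac{1}{6}} = C n^{\frac{1}{3}} ~.
\]

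Substituting these two bounds into Bobkov's inequality yields
\[
D_{Che}(\mu)^2 \geq \frac{c}{\sqrt{n} \cdot n^{1/3}} = \frac{c}{n^{5/6}} ~,
\]
whence $D_{Che}(\mu) \geq c \, n^{-5/12}$ as claimed. There is no real obstacle here: the genuine work is all concentrated in the thin-shell estimate (\ref{eq:improved-thin-shell}), whose proof constitutes the main content of the paper; the corollary itself is a straightforward bookkeeping step once that estimate and Bobkov's theorem are in hand.
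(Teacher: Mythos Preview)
Your proposal is correct and follows exactly the approach indicated in the paper: the corollary is stated there as an immediate consequence of Bobkov's inequality $D_{Che}(\mu)^2 \geq c / (\E |X| \sqrt{\Var |X|})$ combined with the thin-shell estimate (\ref{eq:improved-thin-shell}), with no further argument given. Your explicit use of $\E|X| \leq (\E|X|^2)^{1/2} = \sqrt{n}$ and the arithmetic $n^{1/2} \cdot n^{1/3} = n^{5/6}$ simply spells out what the paper leaves implicit.
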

\noindent
This should be compared to the bound $D_{Che}(\mu) \geq c > 0$ conjectured by Kannan, Lov\'asz and Simonovits \cite{KLS}.
Note that our estimate improves all the way to $D_{Che}(\mu) \geq c n^{-\frac{3}{8}}$ when the density of $\mu$ is $\psi_2$.

\subsection{The Approach}

We assume throughout all proofs in this work that $\n$, and hence $n$, are greater than some large enough positive constant, since otherwise all stated results follow trivially (or easily, by inspecting the proof). Let $G_{n,k}$ denote the Grassmann manifold of all $k$-dimensional linear subspaces of $\Real^n$, and $SO(n)$ the group of rotations. Fixing a Euclidean structure on $\Real^n$, and given a linear subspace $F$, we denote by $S(F)$ and $B_2(F)$ the unit-sphere and unit-ball in $F$, respectively. When $F = \Real^n$, we simply write $S^{n-1}$ and $B_2^n$. We denote by $P_F$ the orthogonal projection onto $F$ in $\Real^n$, and given a random vector $Y$ with density $g$, we denote by $\pi_F g$ the marginal density of $g$ on $F$, i.e. the density of $P_F Y$. When $F = \textrm{span}(\theta)$, $\theta \in S^{n-1}$, we denote by $\pi_\theta g$ the density on $\Real$ given by $\pi_\theta g(t) := \pi_F g(t \theta)$.

For the proof of Theorem \ref{thm:intro-moments}, we use many of the ingredients developed previously by Klartag \cite{KlartagCLPpolynomial}, and adapted to the language of moments by Fleury \cite{FleuryOnVarianceConjecture,FleuryImprovedThinShell}:
\begin{itemize}
\item It is (almost) enough to verify (\ref{eq:small-moments}) and (\ref{eq:moments}) with $AX$ replaced by $Y = (A X + G_n)/\sqrt{2}$.
\item It is useful to first project $Y$ onto a lower-dimensional subspace $F \in G_{n,k}$. This idea also appears in essence in the work of Paouris \cite{Paouris-IsotropicTail}. Klartag and Paouris use V. Milman's approach to Dvoretzky's theorem \cite{VMilman-DvoretzkyTheorem,Milman-Schechtman-Book} for identifying lower-dimensional structures in most marginals $P_F Y$. Fleury, on the other hand, takes an average over the Haar measure on $G_{n,k}$, which is more efficient (see \cite{FleuryImprovedThinShell} or below):
\begin{equation} \label{eq:Gaussians}
\frac{\brac{\E|Y|^p}^{1/p}}{\brac{\E|Y|^2}^{1/2}} \leq \frac{\brac{\E_{F,Y} |P_F Y|^p}^{1/p}}{\brac{\E_{F,Y} |P_F Y|^2}^{1/2}} ~.
\end{equation}
\item Rewriting using the invariance of the Haar measure and polar coordinates:
\begin{equation} \label{eq:intro-ratio}
\frac{\brac{\E_{F,Y} |P_F Y|^p}^{1/p}}{\brac{\E_{F,Y} |P_F Y|^2}^{1/2}} = \frac{\brac{\E_{U} h_{k,p}(U)}^{1/p}}{\brac{ \E_{U} h_{k,2}(U)}^{1/2}} ~,
\end{equation}
where $U$ is uniformly distributed over $SO(n)$, $E_0 \in G_{n,k}$, $\theta_0 \in S(E_0)$, $g$ denotes the density of $Y$ in $\Real^n$, and $h_{k,p} : SO(n) \rightarrow \Real_+$ is defined as:
\begin{equation} \label{eq:intro-h_p}
 h_{k,p}(u) := \vol(S^{k-1}) \int_{0}^\infty t^{p+k-1} \pi_{u(E_0)} g(t u(\theta_0)) dt ~.
\end{equation}
To control the ratio in (\ref{eq:intro-ratio}), a good bound on the log-Lipschitz constant $L_{k,p}$ of $h_{k,p}$ is required.
\end{itemize}

Our main technical result in this work is the following improvement over the log-Lipschitz bounds of Klartag from \cite{KlartagCLPpolynomial}:
\begin{thm} \label{thm:intro-Lip}
Under the same assumptions as in Theorem \ref{thm:intro-deviation}, if $p \geq -k+1$ then $L_{k,p} \leq C \norm{A}_{op} b_\alpha \max(k,p)^{1/\alpha + 1/2}$.
\end{thm}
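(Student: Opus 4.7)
The strategy is to bound $L_{k,p}$ by computing the gradient of $\log h_{k,p}$ on the Lie group $SO(n)$ along arbitrary one-parameter subgroups. By left-invariance it suffices to control
\[
\frac{1}{h_{k,p}(I)}\, \frac{d}{ds}\bigg|_{s=0} h_{k,p}(\exp(sW))
\]
in absolute value, uniformly over skew-symmetric $W$ with $\norm{W}_{op} \leq 1$. The first step is to rewrite $h_{k,p}$ so that $u$ enters only through the argument of $g$: substituting $z = u(w)$ with $w \in E_0^\perp$ in the marginal $\pi_{u(E_0)} g$, one obtains
\[
h_{k,p}(u) = \vol(S^{k-1}) \int_0^\infty t^{p+k-1} \int_{E_0^\perp} g(u(t\theta_0 + w))\, dw\, dt,
\]
so that differentiation at $u = I$ produces an integral of $\nabla g(t\theta_0 + w) \cdot W(t\theta_0 + w)$ against the measure $t^{p+k-1}\, dw\, dt$ over the half-cone $\set{t\theta_0 + w : t > 0,\ w \in E_0^\perp}$.

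The key algebraic observation is $\mathrm{tr}(W) = 0$, so the integral of $\nabla g \cdot Wx$ over all of $\Real^n$ would vanish by the divergence theorem. We exploit this via integration by parts: in the transverse coordinates $w \in E_0^\perp$ there are no boundary terms, while in $t \in [0,\infty)$ the boundary at the origin is tamed by the weight $t^{p+k-1}$, which the hypothesis $p \geq -k+1$ is designed to accommodate. After these manipulations, and after using the skew-symmetry of $W$ to cancel the contribution parallel to $\theta_0$, the derivative reduces to a weighted integral of $g$ against linear functionals $\scalar{\cdot, Wv}$ with $\norm{Wv} \leq \norm{W}_{op} \leq 1$.

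To convert this into a bound on $L_{k,p}$, we invoke the structural features of $Y = (AX + G_n)/\sqrt{2}$. Since $g$ is the convolution of the law of $AX/\sqrt{2}$ with a Gaussian of covariance $\tfrac{1}{2}\mathrm{Id}$, the density $g$ inherits the $\psi_\alpha$ tails of $AX$ in every direction (with constant at most $C \norm{A}_{op} b_\alpha$), while the Gaussian convolution provides smoothness and the identity $-\nabla \log g(y) = 2 y - \sqrt{2}\, \E[AX \mid Y = y]$ for the log-gradient. On the half-cone, the weight $t^{p+k-1}$ localizes the integration at radius $\sim \sqrt{\max(k,p)}$, which is the concentration radius for $|P_F Y|$ at the $L^p$-moment level. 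Combining the $\psi_\alpha$ moment bound of a linear functional of $Y$ at scale $\max(k,p)$, namely $C \norm{A}_{op} b_\alpha \max(k,p)^{1/\alpha}$, with the additional factor $\sqrt{\max(k,p)}$ coming from this radius, yields the claimed bound $C \norm{A}_{op} b_\alpha \max(k,p)^{1/\alpha + 1/2}$.

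The main obstacle is arranging the integration by parts so that \emph{every} surviving term factorizes into this product form — a linear functional of $Y$ with good $\psi_\alpha$ moments, multiplied by the concentration scale of $|P_F Y|$ — rather than requiring cruder log-concavity estimates that would produce an inferior exponent such as $2/\alpha$. Careful tracking of the weight $t^{p+k-1}$ through the by-parts manipulations, and separating the roles of $AX$ and $G_n$ in the log-gradient of $g$, is what pins down both the exponent $1/\alpha + 1/2$ and the precise dependence on $\max(k,p)$.
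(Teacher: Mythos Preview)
Your proposal sketches an analytical route in the spirit of Klartag's original argument, but it does not actually carry out the step on which everything hinges, and there is good reason to doubt that the step works as advertised. You correctly rewrite $h_{k,p}$ so that $u$ enters only through $g(u(\cdot))$, correctly differentiate, and correctly identify the log-gradient formula $-\nabla\log g(y)=2y-\sqrt{2}\,\E[AX\mid Y=y]$. But from that point on the argument is entirely heuristic: you assert that after integration by parts ``every surviving term factorizes'' into a linear functional of $Y$ (contributing $b_\alpha\|A\|_{op}\max(k,p)^{1/\alpha}$) times a radius $\sim\sqrt{\max(k,p)}$, yet you neither perform the integration by parts nor exhibit a single such term. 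You yourself call this ``the main obstacle,'' and indeed it is: Klartag's analytical argument, which uses exactly the same log-gradient identity and integration-by-parts scheme you describe, produces the exponent $2$ (i.e.\ $L_{k,p}\le Ck^2$ when $p\le k$), not $3/2$. Nothing in your sketch explains what new manipulation would beat this. The term $\E[AX\mid Y=y]$ is not a linear functional of $Y$, and controlling its size pointwise by $\psi_\alpha$ moments of $AX$ is precisely where the crude exponent enters.

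The paper's proof takes a completely different, geometric route that sidesteps this difficulty. It bounds $L_{k,p}$ by $C\max(k,p)\,\dist(Z^+_{\max(k,p)}(g),B_2^n)$, where $Z^+_q$ is the one-sided $L_q$-centroid body; the argument splits the tangent space $T_{Id}SO(n)$ into three blocks (rotations within $E_0$ fixing or moving $\theta_0$, and rotations mixing $E_0$ with $E_0^\perp$), and for each block rewrites $h_{k,p}$ as an integral of $\|\cdot\|_{K_q(\pi_H g)}^{-q}$ for Ball's convex bodies $K_q$, so that the derivative is controlled by the triangle inequality for these norms and by $\dist(K_q(\pi_H g),B_2(H))$. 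A separate comparison (Proposition~\ref{prop:Z-K-main} and Theorem~\ref{thm:dist}) then passes from $K_q$-bodies to $Z^+_q$-bodies. The Gaussian convolution enters only at the very end, through the elementary observation that $Z^+_q(g)\supset c\sqrt{q}\,B_2^n$ because $G_n$ already supplies a $\sqrt{q}$-sized centroid body; combined with the $\psi_\alpha$ upper bound $Z^+_q(g)\subset C b_\alpha\|A\|_{op}q^{1/\alpha}B_2^n$, this gives $\dist(Z^+_q(g),B_2^n)\le C b_\alpha\|A\|_{op}q^{1/\alpha-1/2}$ and hence the exponent $1/\alpha+1/2$. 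None of this machinery appears in your proposal, and the clean factorization you hope for is obtained here not by clever by-parts bookkeeping but by the convexity of Ball's norms.
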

\noindent
Contrary to Klartag's analytical approach for controlling the log-Lipschitz constant, ours is completely based on geometric convexity arguments, employing the convex bodies $K_{k+q}$ introduced by K. Ball in \cite{Ball-kdim-sections}, and a variation on the $L_q$-centroid bodies, which were introduced by E. Lutwak and G. Zhang in \cite{LutwakZhang-IntroduceLqCentroidBodies}.

\medskip

Fleury proceeds by employing three additional ingredients:
\begin{itemize}
\item As shown by Borell \cite{BorellLyapunov} (see also \cite{BarlowMarshallProschan}), for any log-concave density $w$ on $\Real_+$:
\begin{equation} \label{eq:Borell-concave}
 q \mapsto \log \frac{\int_0^\infty t^{q} w(t) dt}{\Gamma(q+1)} \text{ is concave on $\Real_+$} ~.
\end{equation}
Consequently, $p \mapsto \log (h_{k,p}(u) / \Gamma(k+p))$ is concave on $p \in [-k+1,\infty)$ for any fixed $u \in SO(n)$.
This ingredient was also used in \cite{FleuryGuedonPaourisCLP}.
\item
As follows e.g. from the work of Bakry and \'Emery \cite{BakryEmery} (see also \cite{Ledoux-Book}), for any Lipschitz function $f : SO(n) \rightarrow \Real_+$, the following log-Sobolev inequality is satisfied (see Sections \ref{sec:Lip} and \ref{sec:moments} for definitions):
\begin{equation} \label{eq:log-Sob}
\Ent_{U}(f) \leq \frac{c}{n} \E_{U} (|\nabla f|^2/f) ~.
\end{equation}
\item
The latter log-Sobolev inequality implies via the Herbst argument, that for any log-Lipschitz function $f : SO(n) \rightarrow \Real_+$ with log-Lipschitz constant bounded above by $L$, the following reverse H\"{o}lder inequality holds (see \cite[(15)]{FleuryImprovedThinShell}):
\begin{equation} \label{eq:reverse-Holder}
(\E_{U} f^q)^{\frac{1}{q}} \leq \exp\brac{C \frac{L^2}{n} (q-r)} (\E_{U} f^r)^{\frac{1}{r}} \;\;\; \forall q > r > 0 ~.
\end{equation}
\end{itemize}

We proceed by using these ingredients as our predecessors, but our proof corrects the slight inefficiency of Fleury's approach in the resulting large-deviation estimate (witnessed by the comparison to Klartag's estimate earlier). The improvement here comes from the fact that we take the derivative in $p$ of (\ref{eq:Gaussians}), and optimize on the dimension $k$ for each $p$ separately, as opposed to optimizing on a single $k$ directly in (\ref{eq:Gaussians}). However, this by itself would not yield the improvement in the thin-shell estimate - the latter is due to our improved log-Lipschitz estimate in Theorem \ref{thm:intro-Lip}. Only by combining this improved log-Lipschitz estimate with our variation on Fleury's method, are we able to recover the sharp large-deviation estimates of Paouris (\ref{eq:Paouris-large-dev-alpha}). Moreover, the negative moment estimates of (\ref{eq:small-moments}) and (\ref{eq:moments}) are also obtained
almost for free, at least with $A X$ replaced by $Y$, after some slight additional justification for handling the $p$ moments in the range $p \in [-c \n^{-1/2} , c \n^{-1/2}]$.

\medskip

The rest of this work is organized as follows. In Section \ref{sec:Lip} we prove a more general version of Theorem \ref{thm:intro-Lip}. In Section \ref{sec:moments} we provide a complete proof of a refined version of Theorem \ref{thm:intro-moments}, with $A X$ replaced by $Y$, without eluding to (\ref{eq:Paouris-small-ball}). In Section \ref{sec:deviation}, we derive for completeness Theorem \ref{thm:intro-deviation} from Theorem \ref{thm:intro-moments}, and obtain the reduction from $A X$ to $Y$.
In the Appendix, we provide a proof of Proposition \ref{prop:Z-K-main} and other lemmas, whose purpose is to handle the case when $X$ is not centrally-symmetric (has non-even density).

\medskip
\noindent \textbf{Acknowledgement.} We thank Bo'az Klartag for his interest and comments and Matthieu Fradelizi for discussions. We also thank the anonymous referees for helpful suggestions. This work was done in part when the authors attended the Thematic Program on Asymptotic Geometric Analysis at the Fields Institute in Toronto.

\section{An improved log-Lipschitz estimate} \label{sec:Lip}

Let $M_{k,l}(\Real)$ denote the set of $k$ by $l$ matrices over $\Real$, and set $M_n(\Real) = M_{n,n}(\Real)$.
We equip
\[
SO(n) = \set{u \in M_{n}(\Real) ; u^t u = Id \;,\; det(u) = 1}
\]
with its standard (left and right) invariant Riemannian metric $g$, which we specify for concreteness on $T_{Id} SO(n)$, the tangent space at the identity element $Id \in SO(n)$. Fixing an orthonormal basis of $\Real^n$ and taking the derivative of the relation $u^t u = Id$, we see that this tangent space may be identified with all anti-symmetric matrices $\set{ B \in M_{n}(\Real) ; B^t + B = 0}$. Given $B \in T_{Id} SO(n)$, we set $|B|^2 = \scalar{B,B} := g_{Id}(B,B) = \frac{1}{2}\norm{B}_{HS}^2$, where recall the Hilbert-Schmidt norm of $A \in M_{k,l}(\Real)$ is given by $\norm{A}^2_{HS} := tr(A^t A) = \sum_{1 \leq i \leq k , 1 \leq j \leq l} A_{i,j}^2$. The factor of $\frac{1}{2}$ above is simply a convenience to ensure that
a full $2 \pi$ degree rotation in any two-plane leaving the orthogonal complement in place, has geodesic length $2 \pi$, and to prevent further appearances of factors like $\sqrt{2}$ later on. Up to this factor, this metric coincides with the one induced from the natural embedding $SO(n) \subset M_n(\Real)$ when $M_n(\Real)$ is equipped with the Hilbert-Schmidt metric (i.e. identified with the canonical Euclidean space on its $n^2$ entries).

\subsection{Main Result}

Throughout this section, let $Y$ denote a random vector in $\Real^n$ with log-concave density $g$ and barycenter at the origin.
Given an integer $k$ between $1$ and $n$, a real number $p \geq -k+1$, a linear subspace $E_0 \in G_{n,k}$ and $\theta_0 \in S(E_0)$, we recall the definition of the function $h_{k,p} : SO(n) \rightarrow \Real_+$:
\begin{equation} \label{eq:h_p}
 h_{k,p}(u) := \vol(S^{k-1}) \int_{0}^\infty t^{p+k-1} \pi_{u(E_0)} g(t u(\theta_0)) dt \;\;\; , \;\;\; u \in SO(n) ~.
\end{equation}
Note that $\pi_E g$ is log-concave for any $E \in G_{n,k}$ by the Pr\'ekopa--Leindler Theorem (e.g. \cite{GardnerSurveyInBAMS}).

When $Y = (X + G_n) / \sqrt{2}$, where (as throughout this work) $X$ denotes an isotropic random vector in $\Real^n$ with log-concave density, an upper bound on the log-Lipschitz constant (i.e. the Lipschitz constant of the logarithm) of:
\[
SO(n) \ni u \mapsto \pi_{u(E_0)} g(t u(\theta_0))
\]
was obtained by Klartag \cite[Lemma 3.1]{KlartagCLPpolynomial}, playing a crucial role in his polynomial estimates on the thin-shell of an isotropic log-concave measure. When $t \leq C \sqrt{k}$, Klartag's estimate is of the order of $k^2$. In \cite{FleuryImprovedThinShell}, Fleury defined a truncated version of (\ref{eq:h_p}), where the integral ranges up to $C \sqrt{k}$. Klartag's estimate obviously implies the same bound on the log-Lipschitz constant of this truncated version of $h_{k,p}$.

Our main technical result in this work is the following improved estimate on the log-Lipschitz constant of $h_{k,p}$, which is completely based on geometric convexity arguments. Note that we do not need any truncation, nor do we need to assume that $Y$ has been convolved with a Gaussian to obtain a meaningful estimate. However, the improvement over Klartag's $k^2$ bound appears after this convolution.

\begin{thm} \label{thm:Lip}
The log-Lipschitz constant $L_{k,p}$ of $h_{k,p}(u): SO(n) \rightarrow \Real_+$ is bounded above by $C \max(k,p) \dist(Z^+_{\max(k,p)}(g),B_2^n)$.
\end{thm}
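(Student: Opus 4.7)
My plan is to express $h_{k,p}(u)$ as a rotation-covariant moment of $g$, and then bound the log-derivative along directions in $SO(n)$ by a geometric comparison with the one-sided $L_q$-centroid body $Z_q^+(g)$ via K.~Ball's convex bodies $K_q$. First, using the rotation-invariance of Lebesgue measure, I would absorb $u$ into the density: letting $g_u(x) := g(ux)$ and changing variables in the integral over $u(E_0)^\perp$,
\begin{equation*}
h_{k,p}(u) = \vol(S^{k-1}) \int_0^\infty t^{p+k-1}\, \pi_{E_0}(g_u)(t \theta_0)\, dt.
\end{equation*}
With $q := p + k \geq 1$, K.~Ball's construction identifies the right-hand side, up to a multiplicative normalization, with $\pi_{E_0}(g_u)(0)\, \rho_{K_q(\pi_{E_0} g_u)}(\theta_0)^q$, so
\begin{equation*}
\log h_{k,p}(u) = \mathrm{const}(k,q) + \log \pi_{E_0}(g_u)(0) + q\, \log \rho_{K_q(\pi_{E_0} g_u)}(\theta_0).
\end{equation*}
The explicit factor of $q$ in front of the radial function is the source of the $\max(k,p)$ prefactor appearing in the bound.

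Next, I would parametrize $u_s = e^{sB} u$ with $B \in T_{Id} SO(n)$ antisymmetric of unit norm, and compute $\tfrac{d}{ds}\big|_{s=0} \log h_{k,p}(u_s)$. The first-variation is an integral over the $(n-k+1)$-dimensional slab $\Omega := \Real_+ \phi_V + F_V^\perp$ (with $\phi_V := u\theta_0$, $F_V := u(E_0)$) of $\langle \nabla g(z), Bz\rangle$ weighted by $\langle z, \phi_V \rangle^{q-1}$. Decomposing $Bz = tB\phi_V + Bw$ with $w \in F_V^\perp$, and using the antisymmetry $\langle B\phi_V, \phi_V\rangle = 0$, one splits $B\phi_V$ into its components in $F_V \cap \phi_V^\perp$ and $F_V^\perp$: the $F_V^\perp$-part is absorbed by integration by parts in $F_V^\perp$ (boundary terms vanish by decay), and the remaining contributions reduce to moment ratios of the form
\begin{equation*}
\frac{\int \langle x, \eta \rangle\, \langle x, \phi_V \rangle_+^{q-1}\, g(x)\, dx}{\int \langle x, \phi_V \rangle_+^q\, g(x)\, dx},
\end{equation*}
for test directions $\eta \in \Real^n$ determined by $B$. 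An analogous analysis applies to $\log \pi_{E_0}(g_u)(0)$ and produces a lower-order contribution of the same shape.

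The final step is a Banach--Mazur-type comparison: the denominator above equals $h_{Z_q^+(g)}(\phi_V)^q$, and the numerator is bounded by $h_{Z_q^+(g)}(\eta)\, h_{Z_q^+(g)}(\phi_V)^{q-1}$ via H\"{o}lder, so the ratio is at most $h_{Z_q^+(g)}(\eta)/h_{Z_q^+(g)}(\phi_V) \leq \dist(Z_q^+(g), B_2^n)$. Combining with the prefactor $q$ from differentiating $\rho_{K_q}^q$ yields $L_{k,p} \leq C q\, \dist(Z_q^+(g), B_2^n)$, and since $q = p+k \leq 2\max(k,p)$ with $h_{Z_q^+(g)}$ varying only by universal constants under factor-of-two changes in $q$ (by Borell's inequality), the stated bound follows. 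The main obstacle will be the careful first-variation bookkeeping on $SO(n)$, in particular tracking the $Bw$ term and the $\log \pi_{E_0}(g_u)(0)$ contribution and verifying they are controlled uniformly by the single quantity $\dist(Z_q^+(g), B_2^n)$; a secondary technical point is that because $g$ need not be centrally symmetric, the \emph{one-sided} body $Z_q^+$ rather than the usual $Z_q$ is the natural object, reflecting the one-sided nature of the radial integrals defining $h_{k,p}$.
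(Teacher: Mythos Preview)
Your reduction to ambient moment ratios is where the argument breaks. The integral defining $h_{k,p}(u)$ is not over $\Real^n$ but over the $(n-k+1)$-dimensional half-slab $\Omega = \Real_+\,\theta_0 + u(E_0)^\perp$, and for $k>1$ neither $h_{k,p}(u)$ nor its first variation can be written as a full-space moment of $g$: in particular the denominator is \emph{not} $h_{Z_q^+(g)}(\phi_V)^q$, and the numerator is not the mixed moment you wrote. What your Ball-body decomposition actually produces (correctly) is $\frac{d}{ds}\log h_{k,p}(u_s)$ controlled by $q$ times a ratio of the Ball norm $\|\cdot\|_{K_{k+p}(\pi_{E_0}g)}$ evaluated at two unit directions, i.e.\ by $q\cdot \dist(K_{k+p}(\pi_{E_0}g),B_2(E_0))$. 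Your H\"older step, which would compare $Z_q^+$-support functions, does not apply to this $K_q$-radial-function ratio, and you have supplied no mechanism to pass from the Ball body of the marginal to $Z^+_q(g)$. In the paper this passage is the content of a separate theorem (Theorem~\ref{thm:dist}), proved via the identity $Z^+_q(K_{m+q}(w))=Z^+_q(w)$ together with a two-sided comparison $Z^+_q(K_{m+q}(w)) \leftrightarrow K_{m+q}(w)$ (Proposition~\ref{prop:Z-K-main}); this is where the real work lies, and your outline skips it entirely.

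A second gap concerns the movements that rotate the subspace $E_0$ itself (your ``$Bw$ term'' and the $F_V^\perp$-component of $B\phi_V$). For these, the marginal $\pi_{E_0}g_u$ changes with $s$, and after integration by parts in $F_V^\perp$ what remains is a directional derivative of the marginal in an $E_0$-direction, not an $n$-dimensional moment. The paper handles this by embedding into a larger invariant subspace $H$ (of dimension $k+1$ or $2k-1$, depending on the movement type), performing a change of variables that again exposes a single Ball norm $\|\cdot\|_{K_{m+p}(\pi_H g)}$, and only then invoking Theorem~\ref{thm:dist}. Your sketch does not provide any analogue of this step, and the acknowledgment that ``tracking the $Bw$ term'' is the main obstacle is accurate: as it stands, that term is not controlled.
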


Here $Z^+_q(w) \subset \Real^n$ ($q \geq 1$) denotes the \emph{one-sided} $L_q$-centroid body of the density $w$ (which may not have total mass one), defined via its support functional:
\[
 h_{Z^+_q(w)}(y) = \brac{2 \int_{\Real^n} \scalar{x,y}_+^q w(x) dx}^{1/q} ~,
\]
(here as usual $a_+ := \max(a,0)$). A dual variant of this definition (when $q \in (0,1)$) was also used by C. Haberl in \cite{HaberlLpIntersectionBodies}. When $w$ is even, this coincides with the more standard definition of the $L_q$-centroid body, introduced by E. Lutwak and G. Zhang in \cite{LutwakZhang-IntroduceLqCentroidBodies} (under a different normalization):
\[
h_{Z_q(w)}(y) = \brac{\int_{\Real^n} \abs{\scalar{x,y}}^q w(x) dx}^{1/q} ~.
\]
Clearly:
\[
Z^+_q(w) \subset 2^{1/q} Z_q(w) ~.
\]
In any case, when $w$ is the characteristic function of a set $K$, we denote $Z^+_q(K) := Z^+_q(1_K)$, and similarly for $Z_q(K)$. Lastly, the \emph{geometric distance} $\dist(K,L)$ between two subsets $K,L \subset \Real^n$ is defined as:
\[
\dist(K,L) := \inf \set{ C_2 / C_1 ; C_1 L \subset K \subset C_2 L ~,~ C_1,C_2 > 0 } ~.
\]

A very useful result for handling the non-even case is due to Gr\"{u}nbaum \cite{GrunbaumSymmetry}
(see also \cite[Formula (10)]{Fradelizi-Habilitation} or \cite[Lemma 3.3]{Bobkov-GaussianMarginals} for simplified proofs):

\begin{lem}[Gr\"{u}nbaum] \label{lem:barycenter}
Let $X_1$ denote a random variable on $\Real$ with log-concave density and barycenter at the origin. Then $\frac{1}{e} \leq \P(X_1 \geq 0) \leq 1-\frac{1}{e}$.
\end{lem}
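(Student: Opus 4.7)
By replacing $X_1$ with $-X_1$ (which preserves log-concavity and the barycenter), the two inequalities are symmetric; it therefore suffices to prove $\P(X_1 \leq 0) \geq 1/e$, or equivalently $\P(X_1 \geq 0) \leq 1 - 1/e$. Write $\rho$ for the density and $u(t) := \P(X_1 \leq t)$. By the one-dimensional Pr\'ekopa--Leindler theorem, $u$ is log-concave on $\Real$, i.e.\ $\log u$ is concave; moreover, the support of $\rho$ is a connected interval containing the origin in its interior (the degenerate half-line case is incompatible with a vanishing mean), so we may assume $\rho(0) > 0$.

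Set $\beta := u(0)$. The tangent line to the concave function $\log u$ at $t = 0$, noting $(\log u)'(0) = \rho(0)/\beta$, yields the single inequality
\[
u(t) \;\leq\; \beta\, \exp\!\Big(\tfrac{\rho(0)}{\beta}\, t\Big) \quad \text{for all } t \in \Real.
\]
My plan is to extract two moment estimates from this one bound. Integrating on $(-\infty, 0]$ gives $\int_{-\infty}^0 u(t)\, dt \leq \beta^2/\rho(0)$; on $[0,\infty)$, rewriting the same bound as $1 - u(t) \geq 1 - \beta\exp(\rho(0) t/\beta)$ and integrating over the interval $[0,T]$ with $T := (\beta/\rho(0))\log(1/\beta)$ (on which the right-hand side is non-negative) yields, after a short calculation, $\int_0^\infty (1 - u(t))\, dt \geq \beta[\log(1/\beta) - (1-\beta)]/\rho(0)$. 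The barycenter condition $\E X_1 = 0$, after integration by parts (boundary terms vanishing thanks to the standard exponential tail decay of log-concave densities), is precisely the identity $\int_{-\infty}^0 u(t)\, dt = \int_0^\infty (1 - u(t))\, dt$. Combining this with the two one-sided estimates and cancelling $\beta/\rho(0)$ gives $\log(1/\beta) - (1-\beta) \leq \beta$, i.e.\ $\log(1/\beta) \leq 1$, so $\beta \geq 1/e$.

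The main conceptual point I expect to have to highlight is that a \emph{single} tangent-line estimate for the log-concave CDF $u$ at the origin produces, after inversion and truncation at the level where the bound crosses $1$, both of the one-sided estimates needed to invoke the barycenter identity, and moreover that this argument is sharp --- witnessed by the exponential density $\rho(x) = e^{-(x+1)}\mathbf{1}_{[-1,\infty)}(x)$, which is log-concave with barycenter $0$ and attains $\P(X_1 \leq 0) = 1 - 1/e$. The remaining loose ends (finite mean and exponential tails for integration by parts, and standard approximation to smooth positive densities so that $(\log u)'(0)$ is unambiguously defined) are routine.
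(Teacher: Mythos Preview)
Your proof is correct. The paper does not supply its own proof of this lemma; it merely cites Gr\"unbaum's original result and points to simplified proofs in Fradelizi and in Bobkov, so there is nothing in the paper to compare against. Your argument---using log-concavity of the CDF $u$, the tangent-line bound $u(t)\le \beta\exp(\rho(0)t/\beta)$, and the barycenter identity $\int_{-\infty}^0 u=\int_0^\infty(1-u)$---is essentially the standard functional proof (in the spirit of the cited references), carried out cleanly and with the sharpness example correctly identified.
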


Note that by definition, $Y$ (and its density $g$) is $\psi_\alpha$ ($\alpha \in [1,2]$) with constant $b_\alpha$ iff $Z_q(g) \subset b_\alpha q^{1/\alpha} Z_2(g)$ for all $q \geq 2$. Also recall that by a result of Berwald \cite{BerwaldMomentComparison} or as a consequence of Borell's Lemma \cite{Borell-logconcave} (see also \cite{Milman-Pajor-LK} or \cite[Appendix III]{Milman-Schechtman-Book}), a log-concave probability density $g$ is always $\psi_1$, and that moreover:
\begin{equation} \label{eq:Z_q-prop}
 1 \leq q_1 \leq q_2 \;\;\; \Rightarrow \;\;\; Z_{q_1}(g) \subset Z_{q_2}(g) \subset C \frac{q_2}{q_1} Z_{q_1}(g) ~.
\end{equation}
If in addition the barycenter of $g$ is at the origin, then repeating the argument leading to (\ref{eq:Z_q-prop}) and using Lemma \ref{lem:barycenter}, one verifies:
\begin{equation} \label{eq:Z^+_q-prop}
 1 \leq q_1 \leq q_2 \;\;\; \Rightarrow \;\;\; \brac{\frac{2}{e}}^{\frac{1}{q_1} - \frac{1}{q_2}} Z^+_{q_1}(g) \subset Z^+_{q_2}(g) \subset C \brac{\frac{2e-2}{e}}^{\frac{1}{q_1} - \frac{1}{q_2}} \frac{q_2}{q_1} Z^+_{q_1}(g) ~.
\end{equation}

When $g$ is isotropic, note that $Z_2(g) = B_2^n$, and one may similarly show (see Lemma \ref{lem:Z^+_2}) that $c B_2^n \subset Z^+_2(g) \subset \sqrt{2} B_2^n$.
It follows immediately from (\ref{eq:Z^+_q-prop}) that in that case $\dist(Z^+_k(g),B_2^n) \leq C k$, and we see that Theorem \ref{thm:Lip} recovers Klartag's $k^2$ order of magnitude when $p \leq k$ (which is the case of interest in the subsequent analysis).

\medskip

The improvement over Klartag's bound comes from the following elementary:
\begin{lem} \label{lem:add-G}
Let $X$ denote an isotropic random-vector in $\Real^n$ with log-concave density. Given $A \in M_{n}(\Real)$, set $Y = (AX + G_n)/\sqrt{2}$ and denote by $g$ its density. Then for all $q \geq 2$:
\begin{enumerate}
\item $Z_q^+(g) \supset c \sqrt{q} B_2^n$.
\item If $X$ is $\psi_\alpha$ ($\alpha \in [1,2]$) with constant $b_\alpha$, then $Z_q^+(g) \subset (C_1 \norm{A}_{op} b_\alpha q^{1/\alpha} + C_2 \sqrt{q}) B_2^n$.
\end{enumerate}
\end{lem}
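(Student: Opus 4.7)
The plan is to estimate the support functional
\[
h_{Z_q^+(g)}(y) = \brac{2\,\E\, \scalar{Y,y}_+^q}^{1/q}
\]
from below and above by separating the two independent summands in $Y = (AX+G_n)/\sqrt{2}$, exploiting the fact that the Gaussian piece already gives a $\sqrt{q}$ contribution while the log-concave piece is controlled by the $\psi_\alpha$ hypothesis.

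For Part (1), the lower bound, I would fix $y$ and condition on $G_n$. Since $s \mapsto s_+^q$ is convex for $q \geq 1$ (indeed, for $q \geq 2$), Jensen's inequality applied to the inner expectation over $X$ gives
\[
\E \scalar{Y,y}_+^q = \E_{G_n} \E_X \brac{\tfrac{1}{\sqrt 2}\scalar{AX,y} + \tfrac{1}{\sqrt 2}\scalar{G_n,y}}_+^q \geq \E_{G_n} \brac{\tfrac{1}{\sqrt 2}\scalar{G_n,y}}_+^q,
\]
because $\E_X \scalar{AX,y} = 0$ by centrality of $X$. Since $\scalar{G_n,y}/\sqrt{2} \sim N(0,|y|^2/2)$ and $\brac{\E |Z|^q}^{1/q} \geq c\sqrt{q}$ for a standard Gaussian $Z$, we obtain $h_{Z_q^+(g)}(y) \geq c\sqrt{q}\,|y|$, which is exactly the asserted inclusion $Z_q^+(g) \supset c\sqrt{q}\, B_2^n$.

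For Part (2), the upper bound, I would use $\scalar{Y,y}_+ \leq |\scalar{Y,y}|$ and the triangle inequality in $L^q$:
\[
\|\scalar{Y,y}\|_q \leq \tfrac{1}{\sqrt 2}\|\scalar{X,A^t y}\|_q + \tfrac{1}{\sqrt 2}\|\scalar{G_n,y}\|_q.
\]
The $\psi_\alpha$ assumption on $X$, together with isotropy ($\|\scalar{X,A^t y}\|_2 = |A^t y|$) and $|A^t y| \leq \|A\|_{op}|y|$, bounds the first term by $b_\alpha q^{1/\alpha}\|A\|_{op}|y|$. The second term is a direct Gaussian moment computation, bounded by $C\sqrt q\,|y|$. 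Multiplying by $2^{1/q} \leq 2$ and combining yields the stated estimate $h_{Z_q^+(g)}(y) \leq (C_1\|A\|_{op} b_\alpha q^{1/\alpha} + C_2 \sqrt q)|y|$.

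There is no real obstacle here; the lemma is essentially an accounting exercise in how the two independent pieces of $Y$ contribute to the one-sided moment. The only conceptual point worth flagging is that centrality of $X$ is essential for the Jensen step in Part (1) — without it the conditional mean would not vanish and one would lose control over the sign of $\scalar{AX,y}$.
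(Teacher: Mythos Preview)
Your proof is correct, and in fact your argument for Part~(1) is cleaner than the paper's. The paper argues by conditioning on the sign of $X_1 := \scalar{AX,y}$: on the event $\{X_1 \geq 0\}$ one has $(X_1 + G_1)_+ \geq (G_1)_+$ pointwise, so by independence $\E(X_1+G_1)_+^q \geq \E(G_1)_+^q\,\P(X_1 \geq 0)$, and then Gr\"unbaum's lemma (Lemma~\ref{lem:barycenter}) is invoked to bound $\P(X_1 \geq 0) \geq 1/e$ using the log-concavity of $X_1$. Your Jensen step bypasses this entirely: convexity of $s \mapsto s_+^q$ and $\E_X\scalar{AX,y}=0$ already give $\E_X(X_1+G_1)_+^q \geq (G_1)_+^q$, so only centrality of $X$ is used and log-concavity plays no role. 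For Part~(2) the two proofs are essentially the same; you use Minkowski in $L^q$, while the paper uses the cruder pointwise bound $|a+b|^q \leq 2^{q-1}(|a|^q+|b|^q)$, which costs only constants.
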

\begin{proof}
Given $\theta \in S^{n-1}$, denote
$Y_1 = \pi_{\theta} Y$, $X_1 = \pi_{\theta} A X$ and $G_1 = \pi_{\theta} G_n$ (a one-dimensional standard Gaussian random variable). We have:
\[
 h^q_{Z^+_q(g)}(\theta) = 2 \E (Y_1)_+^q = \frac{2}{2^{q/2}} \E \brac{X_1 + G_1}_+^q \geq \frac{2}{2^{q/2}} \E (G_1)_+^q \P(X_1 \geq 0) ~.
\]
When $X$ is centrally-symmetric then $\P(X_1 \geq 0) = 1/2$. In the general case, since $X_1$ has log-concave density on $\Real$ and barycenter at the origin, Lemma \ref{lem:barycenter} implies that $\P(X_1 \geq 0) \geq 1/e$, and hence:
\[
 h^q_{Z^+_q(g)}(\theta) \geq \frac{1}{e 2^{q/2}} \E|G_1|^q ~,
\]
by the symmetry of $G_1$.
An elementary calculation shows that $c_1 \sqrt{q} \leq (\E |G_1|^q)^{1/q} \leq c_2 \sqrt{q}$ for all $q \geq 1$, concluding the proof of the first assertion. Similarly:
\[
\frac{1}{2} h^q_{Z_q^+(g)}(\theta) \leq h^q_{Z_q(g)}(\theta) = \E |Y_1|^q = \E \abs{\frac{X_1 + G_1}{\sqrt{2}}}^q \leq \frac{2^{q-1}}{2^{q/2}} \E (|X_1|^q + |G_1|^q) ~.
\]
Assuming that $X$ is $\psi_\alpha$ with constant $b_\alpha$ and isotropic, it follows that:
\[
(\E|X_1|^q)^{1/q} \leq b_\alpha q^{1/\alpha} (\E|X_1|^2)^{1/2} \leq b_\alpha q^{1/\alpha} \norm{A}_{op} ~,
\]
and the second assertion readily follows.
\end{proof}

\begin{cor} \label{cor:Lip}
Let $X$ denote an isotropic random-vector in $\Real^n$ with log-concave density, which is in addition $\psi_\alpha$ ($\alpha \in [1,2]$) with constant $b_\alpha$. Let $A \in M_{n}(\Real)$, set $Y = (A X + G_n)/\sqrt{2}$ and denote by $g$ the density of $Y$. Then:
\[
\dist(Z^+_q(g),B_2^n) \leq C_1 (1 + \norm{A}_{op} b_\alpha q^{1/\alpha - 1/2}) ~.
\]
Consequently, when $\norm{A}_{op} \geq 1$, Theorem \ref{thm:Lip} implies that:
\[
L_{k,p} \leq C_2 \norm{A}_{op} b_\alpha \max(k,p)^{1/\alpha + 1/2} ~.
\]
\end{cor}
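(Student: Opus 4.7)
The plan is to derive both assertions of the corollary as routine algebraic consequences of Lemma \ref{lem:add-G} and Theorem \ref{thm:Lip}; no new geometric input is needed.

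For the bound on $\dist(Z^+_q(g), B_2^n)$, I will simply take the ratio of the inradius and circumradius estimates supplied by Lemma \ref{lem:add-G}. The inner inclusion $Z_q^+(g) \supset c\sqrt{q}\,B_2^n$ and the outer inclusion $Z_q^+(g) \subset (C_1 \norm{A}_{op} b_\alpha q^{1/\alpha} + C_2 \sqrt{q}) B_2^n$, both valid for $q \geq 2$, yield directly
\[
\dist(Z^+_q(g), B_2^n) \leq \frac{C_1 \norm{A}_{op} b_\alpha q^{1/\alpha} + C_2 \sqrt{q}}{c\sqrt{q}} \leq C'\bigl(1 + \norm{A}_{op} b_\alpha q^{1/\alpha - 1/2}\bigr).
\]
For the residual band $q \in [1,2]$, the monotonicity relation (\ref{eq:Z^+_q-prop}) gives $\dist(Z_q^+(g), Z_2^+(g)) \leq C$ uniformly, so the bound at $q=2$ transfers to this range at the cost of enlarging the universal constants.

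For the estimate on $L_{k,p}$, I will invoke Theorem \ref{thm:Lip} with $q := \max(k,p)$, which satisfies $q \geq 1$ since $k \geq 1$, and substitute in the bound just derived:
\[
L_{k,p} \leq C\,q\,\dist(Z^+_q(g), B_2^n) \leq C_3\,q + C_3\,\norm{A}_{op} b_\alpha\,q^{1/\alpha + 1/2}.
\]
The second summand is exactly the target expression. To absorb the first, note that for $\alpha \in [1,2]$ one has $1/\alpha + 1/2 \geq 1$, so $q^{1/\alpha + 1/2} \geq q$; combined with the hypothesis $\norm{A}_{op} \geq 1$ and the universal lower bound $b_\alpha \geq 2^{-1/\alpha}$ recalled in the introduction, this shows $q \leq C\,\norm{A}_{op} b_\alpha\,q^{1/\alpha + 1/2}$, and the two summands collapse into one.

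I do not anticipate any substantive obstacle: the corollary is essentially a packaging step. The only point requiring brief care is the extension to the narrow range $q \in [1,2]$, where the quantitative form of Lemma \ref{lem:add-G} is not directly applicable and one instead appeals to the monotonicity (\ref{eq:Z^+_q-prop}).
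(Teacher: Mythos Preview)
Your proof is correct and follows exactly the route the paper intends: the corollary is stated without a separate proof because it is meant as an immediate combination of the inradius/circumradius bounds of Lemma~\ref{lem:add-G} with Theorem~\ref{thm:Lip}, and your write-up is the natural fleshing out of that. The only extra detail you supply beyond what the paper records is the handling of $q\in[1,2]$ via (\ref{eq:Z^+_q-prop}); this is harmless and correct, though in the subsequent applications one always has $q=\max(k,p)\geq k\geq 2$, so this range never actually arises.
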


\subsection{Proof of Theorem \ref{thm:Lip}}

For convenience, we assume that $2 \leq k \leq n/2$, although it will be clear from the proof that this is immaterial. By the symmetry and transitivity of $SO(n)$, and since $E_0 \in G_{n,k}$ was arbitrary, it is enough to bound $|\nabla_{u_0} \log h_{k,p}|$ at $u_0 = Id$.
We complete $\theta_0$ to an orthonormal basis $\set{\theta_0,e^2,\ldots,e^k}$ of $E_0$, and take $\set{e^{k+1},\ldots,e^n}$ to be any completion to an orthonormal basis of $\Real^n$. In this basis, the anti-symmetric matrix $M := \nabla_{Id} \log h_{k,p} \in T_{Id} SO(n)$ looks as follows:
\begin{equation} \label{eq:M}
M = \brac{
\begin{BMAT}(rc){c:c}{c:c}
 \blockmatrix{0.4in}{0.4in}{$M_1$}  & \blockmatrix{0.8in}{0.4in}{$M_2$} \\
\blockmatrix{0.4in}{0.8in}{$-M_2^t$} &  \blockmatrix{0.8in}{0.8in}{$0$}
\end{BMAT}
} ~,~
M_1 =
\brac{
\begin{BMAT}(rc){c:c}{c:c}
\blockmatrix{0.25in}{0.25in}{\scriptsize{$0$}} & \blockmatrix{0.6in}{0.25in}{\scriptsize{$V_1$}} \\
\blockmatrix{0.25in}{0.6in}{\scriptsize{$-V_1^t$}} &  \blockmatrix{0.6in}{0.6in}{\scriptsize{$0$}}
\end{BMAT}
} ~,~
M_2 =
\brac{
\begin{BMAT}(rc){c}{c:c}
\blockmatrix{1.0in}{0.1in}{\scriptsize{$V_2$}} \\
\blockmatrix{1.0in}{0.4in}{$V_3$}
\end{BMAT}
} ~,
\end{equation}
where $M_1 \in M_{k,k}(\Real)$, $M_2 \in M_{k,n-k}(\Real)$, $V_1 \in M_{1,k-1}(\Real)$, $V_2 \in M_{1,n-k}(\Real)$ and $V_3 \in M_{k-1,n-k}(\Real)$. Indeed, the lower $n-k$ by $n-k$ block of $M$ is clearly $0$, since rotations in $E_0^\perp$, the orthogonal complement to $E_0$, leave $\pi_{u(E_0)} g$ and hence $h_{k,p}$ unaltered; and the lower $k-1$ by $k-1$ block of $M_1$ is $0$ since rotations which fix $\theta_0$ and act invariantly on $E_0$ preserve $h_{k,p}$ as well. Consequently $|\nabla_{Id} \log h_{k,p}|^2 = \norm{V_1}_{HS}^2 + \norm{V_2}_{HS}^2 + \norm{V_3}_{HS}^2$. We will analyze the contribution of these three terms separately.

Denote by $T_i$ ($i=1,2,3$) the subspace of $T_{Id} SO(n)$ having the form (\ref{eq:M}) with $V_{j} = 0$ for $j \neq i$. Given $B \in T_i$, we call the geodesic in $SO(n)$ emanating from $Id$ in the direction of $B$, i.e. $\Real \ni s \mapsto u_s := \exp_{Id}(s B) \in SO(n)$, a \emph{Type-$i$ movement}. By definition, $\frac{d}{ds} u_s|_{s=0} = B$, and hence $\frac{d}{ds}  \log h_{k,p}(u_s) |_{s=0} = \scalar{\nabla_{Id} \log h_{k,p}, B}$. Clearly $\norm{V_i}_{HS} = \sup_{ 0 \neq B \in T_i} \scalar{\nabla_{Id} \log h_{k,p},B} / |B|$, so our goal now is to obtain a uniform upper bound on the derivative of $\log h_{k,p}$ induced by a Type-$i$ movement.

\medskip

To this end, we recall the following crucial fact, due to K. Ball \cite[Theorem 5]{Ball-kdim-sections} in the even case, and verified to still hold in the general one by Klartag \cite[Theorem 2.2]{KlartagPerturbationsWithBoundedLK}:
\begin{thm*}\label{thm:Ball}
Let $w$ denote a log-concave function on $\Real^m$ with $0 < \int w < \infty$ and $w(0) > 0$. Given $q \geq 1$, set:
\[
\norm{x} = \norm{x}_{K_{q}(w)} := \brac{q \int_{0}^\infty t^{q-1} w(t x) dt}^{-\frac{1}{q}} ~,~ x \in \Real^m ~.
\]
Then for all $x,y \in \Real^m$, $0 \leq \norm{x} < \infty$, $\norm{x}= 0$ iff $x = 0$, $\norm{\lambda x} = \lambda \norm{x}$ for all $\lambda \geq 0$, and $\norm{x + y} \leq \norm{x} + \norm{y}$.
\end{thm*}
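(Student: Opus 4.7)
The plan is to reduce all four properties to concavity of the dual functional $\rho(x) := 1/\norm{x}_{K_q(w)} = \brac{q \int_0^\infty t^{q-1} w(tx) dt}^{1/q}$ on $\Real^m$, with the convention $\rho(0) := +\infty$. Positive $1$-homogeneity $\rho(\lambda x) = \lambda \rho(x)$ is the change of variables $u = \lambda t$. Finiteness and strict positivity of $\rho$ for $x \neq 0$ follow from Borell's Lemma (a log-concave $L^1$ function decays super-polynomially along every ray, so the moment integral converges) together with $w(0) > 0$ and lower semicontinuity of $w$ at the origin, giving $w(tx) \geq c > 0$ on some interval $[0,\varepsilon]$. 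For $x = 0$ the integrand is the constant $w(0) > 0$, so $\rho(0) = +\infty$ and $\norm{0} = 0$. Given these facts, positive homogeneity of $\norm{\cdot}$ and the non-degeneracy $\norm{x} = 0 \iff x = 0$ are immediate. The triangle inequality follows from concavity plus $1$-homogeneity of $\rho$, which jointly yield super-additivity $\rho(x+y) \geq \rho(x) + \rho(y)$; combining this with the elementary $1/(a+b) \leq 1/a + 1/b$ gives
\[
\norm{x+y} = 1/\rho(x+y) \leq 1/(\rho(x) + \rho(y)) \leq 1/\rho(x) + 1/\rho(y) = \norm{x} + \norm{y}.
\]

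The heart of the argument is concavity of $\rho$, which I would deduce from Ball's one-dimensional lemma: if $F, G, H : [0,\infty) \to \Real_+$ are log-concave and satisfy the Pr\'ekopa--Leindler hypothesis $H((1-\mu)s + \mu t) \geq F(s)^{1-\mu} G(t)^\mu$ for all $s, t \geq 0$, then for $q \geq 1$,
\[
\brac{q \int_0^\infty r^{q-1} H(r) dr}^{1/q} \geq (1-\mu) \brac{q \int_0^\infty s^{q-1} F(s) ds}^{1/q} + \mu \brac{q \int_0^\infty t^{q-1} G(t) dt}^{1/q}.
\]
To apply this, I would fix $x, y \in \Real^m \setminus \{0\}$ and $\lambda \in (0,1)$, normalize by $1$-homogeneity so that $\rho(x) = \rho(y) = 1$, then apply the lemma to the radial slices $F(s) = w(sx)$, $G(t) = w(ty)$, $H(r) = w(rz)$ where $z = \mu x + (1-\mu) y$ is the appropriately reparameterized convex combination. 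The log-concavity of $w$ on the two-plane $\text{span}(x,y)$ supplies the pointwise hypothesis once the parameter $\mu$ and the radial scales are matched so that $rz$ coincides with $(1-\mu)(sx) + \mu(ty)$; a naive Hölder argument at a single $t$ goes the wrong way, so the genuine $1$D interpolation across different $s,t$ is essential.

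The main obstacle is Ball's one-dimensional lemma itself. For integer $q$ it reduces to classical Brunn--Minkowski on $\Real^q$ applied to the radial lifts $f \mapsto \tilde f(y) = f(|y|)$, since $q \int_0^\infty r^{q-1} f(r) dr$ is proportional to $\int_{\Real^q} \tilde f(y) dy$; for real $q \geq 1$ one appeals to a direct analytic proof via Borell / Brascamp--Lieb, as established by Ball in \cite{Ball-kdim-sections} in the centrally symmetric setting. The second subtlety is the non-even case, addressed by Klartag in \cite{KlartagPerturbationsWithBoundedLK}: the one-sided statement on $[0,\infty)$ remains valid for arbitrary log-concave $w$ with $w(0) > 0$, with Gr\"unbaum's lemma (Lemma~\ref{lem:barycenter}) providing the necessary control on the asymmetric behavior of radial slices. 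I would invoke these established one-dimensional results rather than re-derive them.
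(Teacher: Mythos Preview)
First, note that the paper does not prove this theorem at all: it is quoted as a known fact, with the even case attributed to Ball \cite[Theorem 5]{Ball-kdim-sections} and the general case to Klartag \cite[Theorem 2.2]{KlartagPerturbationsWithBoundedLK}. So there is no ``paper's proof'' to compare against, but your outline still deserves scrutiny.

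Your plan contains a basic error that unravels the whole scheme. The functional
\[
\rho(x)=\Big(q\int_0^\infty t^{q-1}w(tx)\,dt\Big)^{1/q}
\]
is positively $(-1)$-homogeneous, not $(+1)$-homogeneous: the substitution $u=\lambda t$ produces a factor $\lambda^{-q}$, so $\rho(\lambda x)=\lambda^{-1}\rho(x)$. Any finite positive $(-1)$-homogeneous function fails concavity already along a single ray (for the Gaussian $w$, $\rho(x)=c/|x|$). Consequently the chain ``concavity + $1$-homogeneity $\Rightarrow$ super-additivity $\Rightarrow$ triangle inequality'' is void; in fact, super-additivity of $\rho$ would force the absurd $\norm{x+y}\le\min(\norm{x},\norm{y})$.

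Your stated ``Ball's one-dimensional lemma'' is also false as written. Take $q=2$, $\mu=\tfrac12$, $F=2\cdot 1_{[0,1]}$, $G=1_{[0,1]}$, $H=\sqrt{2}\cdot 1_{[0,1]}$; the Pr\'ekopa--Leindler hypothesis holds, yet
\[
\Big(\int_0^\infty rH\Big)^{1/2}=2^{-1/4}\approx 0.841<\tfrac12\Big((\smallint sF)^{1/2}+(\smallint tG)^{1/2}\Big)=\tfrac12\big(1+2^{-1/2}\big)\approx 0.854.
\]
The claimed ``reduction to Brunn--Minkowski for integer $q$ via radial lifts'' yields only the Pr\'ekopa--Leindler (geometric-mean) conclusion, not the arithmetic-mean one you wrote down.

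What one actually needs is \emph{convexity} of $\norm{\cdot}=1/\rho$, equivalently $(-1/q)$-concavity of $x\mapsto\int_0^\infty t^{q-1}w(tx)\,dt$; equivalently still, quasi-concavity of $\rho$. Ball's genuine one-dimensional inequality (and Klartag's non-even extension) is calibrated to give exactly this, after the reparametrization you gesture at but do not carry out; the matching of $\mu$ and the radial scales is the whole point, and your normalization ``$\rho(x)=\rho(y)=1$ by $1$-homogeneity'' cannot be performed since $\rho$ has the wrong homogeneity. Finally, Lemma~\ref{lem:barycenter} (Gr\"unbaum) is not relevant here: the hypothesis is $w(0)>0$, not that the barycenter lies at the origin, and Klartag's argument does not use it.
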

We will thus say that $\norm{\cdot}_{K_{q}(w)}$ defines a norm, even though it may fail to be even, and denote by $K_q(w) := \{x \in \Real^m ; \norm{x}_{K_q(w)} \leq 1 \}$ its associated convex compact unit-ball.
Note that the constant $q$ in front of the integral above is simply a convenient normalization for later use.
We also set $\norm{x}_{\hat{K}_q(w)} := \max(\norm{x}_{K_q(w)},\norm{-x}_{K_q(w)})$, having unit-ball $\hat{K}_q(w) = K_q(w) \cap -K_q(w)$.
Note that the triangle inequality implies that:
\begin{equation} \label{eq:triangle}
\abs{\norm{x}_{K_{q}(w)} - \norm{y}_{K_{q}(w)}} \leq \norm{x-y}_{\hat{K}_q(w)} ~.
\end{equation}
Finally, note that since $B_2^m$ is centrally-symmetric, then $C_1 B_2^m \subset K \cap -K \subset K \subset C_2 B_2^m$ iff $C_1 B_2^m \subset K \subset C_2 B_2^m$, and hence:
\begin{equation} \label{eq:dist}
\frac{\norm{x}_{\hat{K}_q(w)}}{\norm{y}_{K_{q}(w)}} \leq \dist(K_q(w),B_2^m) \frac{|x|}{|y|} ~.
\end{equation}

\subsubsection{Type-1 movement}

Let $B \in T_1$ with $|B|=1$ generate a Type-1 movement $\set{u_s}$, and denote $\xi_0 = \frac{d}{ds} u_s(\theta_0) |_{s=0} \in T_{\theta_0} S(\Real^n)$. Using henceforth the natural embedding $T_{\theta} S(\Real^n) \subset T_{\theta} \Real^n \simeq \Real^n$, a Type-1 movement ensures that $u_s$ is a rotation in the $\set{\theta_0,\xi_0}$ plane and that $\xi_0$ lies in the orthogonal complement to $\theta_0$ in $E_0$, so $u_s(E_0)= E_0$. Also note since $|B|=1$ that $|\xi_0|=1$. Recalling the definition of $h_{k,p}$, we conclude that for such a movement:
\[
h_{k,p}(u_s) =  \vol(S^{k-1}) \int_{0}^\infty  t^{p+k-1} \pi_{E_0} g(t u_s(\theta_0)) dt = c_{p,k} \norm{u_s(\theta_0)}^{-(k+p)}_{K_{k+p}(\pi_{E_0} g)} ~,
\]
where $c_{p,k} = \vol(S^{k-1})/(k+p)$ is totally immaterial. Consequently:
\[
 \abs{\scalar{\nabla_{Id} \log h_{k,p},B}} = \abs{\left . \frac{d}{ds}  \log h_{k,p}(u_s) \right |_{s=0} } =
(k+p) \abs{\left . \frac{d}{ds} \log \norm{u_s(\theta_0)}_{K_{k+p}(\pi_{E_0} g)} \right |_{s=0} } ~.
\]
Since $\abs{\frac{d}{ds} \norm{u_s(\theta_0)}_{K_{k+p}(\pi_{E_0} g)}} \leq \norm{\frac{d}{ds} u_s(\theta_0)}_{\hat{K}_{k+p}(\pi_{E_0} g)}$ by the triangle-inequality (\ref{eq:triangle}), we conclude using (\ref{eq:dist}) that:
\[
 \abs{\scalar{\nabla_{Id} \log h_{k,p},B}} \leq (k+p) \frac{\norm{\xi_0}_{\hat{K}_{k+p}(\pi_{E_0} g)}}{\norm{\theta_0}_{K_{k+p}(\pi_{E_0} g)}} \leq (k+p) \dist(K_{k+p}(\pi_{E_0} g)) , B_2(E_0)) ~.
\]

\subsubsection{Type-2 movement}

Let $B \in T_2$ with $|B|=1$ generate a Type-2 movement $\set{u_s}$, and denote $\theta_s := u_s(\theta_0)$ and $\xi_s := \frac{d}{ds} \theta_s \in T_{\theta_s} S(\Real^n)$. The Type-2 movement ensures that $\xi_0 \in E_0^\perp$ and that $u_s$ is a rotation in the $\set{\theta_0,\xi_0} = \set{\theta_s,\xi_s}$ plane, and $|B|=1$ ensures that $|\xi_0|=1$. Denoting $E^1$ the orthogonal complement to $\theta_0$ in $E_0$, it follows that $u_s$ rotates $E_0$ into $E_s := u_s(E_0) = E^1 \oplus \text{span}\{\theta_s\}$. Consequently, $u_s$ leaves $H := E_0 \oplus \text{span}\{\xi_0\} = E_s \oplus \text{span}\{\xi_s\} \in G_{n,k+1}$ invariant, and therefore:
\[
h_{k,p}(u_s) =  \vol(S^{k-1}) \int_{0}^\infty \int_{-\infty}^{\infty} t^{p+k-1} \pi_{H} g(t \theta_s + r \xi_s) dr dt ~.
\]
Performing the change of variables $r = v t$, which is valid except at the negligible point $t=0$, we obtain:
\[
 h_{k,p}(u_s) = \vol(S^{k-1}) \int_{0}^\infty \int_{-\infty}^{\infty} t^{p+k} \pi_{H} g(t (\theta_s + v \xi_s)) dv dt = c_{p,k}
\int_{-\infty}^{\infty} \norm{\theta_s + v \xi_s}^{-(k+p+1)}_{K_{k+p+1}(\pi_H g)} dv ~,
\]
where $c_{p,k} = \vol(S^{k-1})/(k+p+1)$. Using that $\frac{d}{ds} \xi_s = -\theta_s$ and the triangle inequality (\ref{eq:triangle}) and (\ref{eq:dist}) for $\norm{\cdot}_{K_{k+p+1}(\pi_H g)}$, we obtain:
\begin{eqnarray*}
\abs{\scalar{\nabla_{Id} \log h_{k,p},B}} &=& \abs{\left . \frac{d}{ds} \log h_{k,p}(u_s) \right |_{s=0} }
\leq (k+p+1) \sup_{v \in \Real} \frac{\norm{\xi_0 - v \theta_0}_{\hat{K}_{k+p+1}(\pi_{H} g)}}{\norm{\theta_0 + v \xi_0}_{K_{k+p+1}(\pi_{H} g)}} \\
&\leq&
 (k+p+1) \dist(K_{k+p+1}(\pi_H g),B_2(H)) \sup_{v \in \Real} \frac{|\xi_0 - v \theta_0|}{|\theta_0 + v \xi_0|} \\
&=&
 (k+p+1) \dist(K_{k+p+1}(\pi_H g),B_2(H)) ~,
\end{eqnarray*}
where we have used the fact that $\theta_0$ and $\xi_0$ are orthogonal unit vectors in the last equality.

\subsubsection{Type-3 movement}

Finally, we analyze the most important movement type, which is responsible for a subspace of movements of dimension $(k-1)(n-k)$ (out of the $\text{dim } G_{n,k} +\text{dim } S^{k-1} = k (n-k) + (k-1)$ dimensional subspace of non-trivial movements).

Let $0 \neq B \in T_3$ generate a Type-3 movement $\set{u_s}$, and set  $e^j_s := u_s(e^j)$ and $f^j :=  \frac{d}{ds} e^j_s |_{s=0}$, $j=2,\ldots,k$. The Type-3 movement ensures that $u_s(\theta_0) = \theta_0$ and that all $f^j \in E_0^\perp$. Denote $F_0 := \text{span}\{f^2,\ldots,f^k\}$, and note that by slightly perturbing $B$ if necessary, we may assume that $F_0$ is $k-1$ dimensional. Finally, set $H = E_0 \oplus F_0 \in G_{n,2k-1}$, and notice that $H$ is invariant under $u_s$ (since $u_s$ is an isometry acting as the identity on the orthogonal complement). Consequently, $H = E_s \oplus F_s$, where $E_s := u_s(E_0)$ and $F_s := u_s(F_0)$, and therefore:
\[
 h_{k,p}(u_s) =  \vol(S^{k-1}) \int_{0}^\infty \int_{F_s} t^{p+k-1} \pi_{H} g (t \theta_0 + y) dy dt ~.
\]
Using the change of variables $y = z t$, we obtain (with $c_{p,k} = \vol(S^{k-1})/(2k-1+p)$):
\[
 h_{k,p}(u_s) = \vol(S^{k-1}) \int_{0}^\infty \int_{F_s} t^{p+2k-2} \pi_{H} g (t (\theta_0 + z)) dz dt = c_{p,k} \int_{F_s} \norm{\theta_0 + z}^{-(2k-1+p)}_{K_{2k-1+p}(\pi_H g)} dz ~,
\]
which we rewrite, since $u_s$ is orthogonal, as:
\[
 h_{k,p}(u_s) = c_{p,k} \int_{F_0} \norm{\theta_0 + u_s(z)}^{-(2k-1+p)}_{K_{2k-1+p}(\pi_H g)} dz ~.
\]
As usual, the triangle inequality (\ref{eq:triangle}) for $\norm{\cdot}_{K_{2k-1+p}(\pi_H g)}$ implies that:
\[
\abs{\scalar{\nabla_{Id} \log h_{k,p},B}} = \abs{\left . \frac{d}{ds}  \log h_{k,p}(u_s) \right |_{s=0}}
\leq (2k-1+p) \sup_{z \in F_0} \frac{\norm{B z}_{\hat{K}_{2k-1+p}(\pi_{H} g)}}{\norm{\theta_0 + z}_{K_{2k-1+p}(\pi_{H} g)}} ~,
\]
and so by (\ref{eq:dist}):
\begin{eqnarray*}
 \frac{\abs{\scalar{\nabla_{Id} \log h_{k,p},B}}}{(2k-1+p) \dist(K_{2k-1+p}(\pi_H g),B_2(H)) } & \leq &
\sup_{z \in F_0} \frac{|Bz|}{|\theta_0 + z|} \leq \norm{B}_{op} \sup_{z \in F_0} \frac{|z|}{\sqrt{1 + |z|^2}} \\
& \leq & \frac{\norm{B}_{HS}}{\sqrt{2}} = |B| ~,
\end{eqnarray*}
where we have used that $\theta_0$ is perpendicular to $F_0$, and that $\norm{B}_{op} \leq \norm{B}_{HS} / \sqrt{2}$ for any anti-symmetric matrix $B$,
as may be easily verified by using the Cauchy--Schwarz inequality.

\subsection{Distance of $K_{m+p}$ to Euclidean ball}

To conclude the proof of Theorem \ref{thm:Lip}, it remains to control the geometric distance of $K_{m+p}(\pi_H g)$ to a Euclidean ball, for $H \in G_{n,m}$ with $m$ of the order of $k$. To this end, we compare $K_{m+p}(\pi_H g)$ to $Z_q(\pi_H g) = P_H Z_{q}(g)$ for a suitably chosen $q \geq 1$. Our motivation comes from the groundbreaking work of Paouris \cite{Paouris-IsotropicTail}, who noted that:
\[
Z_q(\pi_H g)  = Z_q(K_{m+q}(\pi_H g)) ~,
\]
and using the inclusion $Z_q(K) \subset conv(K \cup -K)$ for any set $K$ of volume $1$, obtained an upper bound on $\vol(Z_q(\pi_H g))$ by bounding above $\vol(K_{m+q}(\pi_H g))$, enabling Paouris to deduce important features of $P_H Z_{q}(g)$. In this work, on the other hand, we take the converse path, passing from $K_{m+q}$ bodies to $Z_q$ ones, and consequently need to introduce the $Z^+_q$ bodies to handle non-even densities. Moreover, we require bounds on $Z^+_q(K)$ both from above and from below, which turn out to be more laborious in the non-even case (when $K$ is not centrally-symmetric).

Since the distance to the Euclidean ball cannot increase under orthogonal projections, and since $c_1 Z^+_k(g) \subset c_2 Z^+_m(g) \subset c_3 Z^+_{2k-1}(g) \subset c_4 Z^+_{k}(g)$ when $k \leq m \leq 2k-1$ by (\ref{eq:Z^+_q-prop}), it remains to establish the following:

\begin{thm}\label{thm:dist}
Let $w$ denote a log-concave function on $\Real^m$ with $0 < \int w < \infty$ and barycenter at the origin. Then for any $p \geq -m+1$:
\[
\dist (K_{m+p}(w) , B_2^m) \leq C \max\brac{\frac{m}{m+p},1} \dist (Z^+_{\max(p,m)}(w),B_2^m) ~.
\]
\end{thm}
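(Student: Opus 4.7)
The plan is to convert the problem on log-concave densities into a convex-geometric one and reduce it to a direct comparison between a convex body and its $Z_q^+$-body. Two reductions are involved. First, a polar-coordinate computation yields the Paouris-type identity $Z_q^+(w) = Z_q^+(K_{m+q}(w))$ for every $q \geq 1$: expressing $h_{Z_q^+(w)}(y)^q = 2\int_{\Real^m} \scalar{x,y}_+^q w(x) dx$ in spherical coordinates and recognizing the radial integral $\int_0^\infty r^{m+q-1} w(r\theta) dr$ as $\frac{1}{m+q}\norm{\theta}_{K_{m+q}(w)}^{-(m+q)}$ produces exactly the polar integrand for the convex body $K_{m+q}(w)$. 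Second, applying Borell's log-concavity (\ref{eq:Borell-concave}) to the one-dimensional density $s \mapsto w(s\theta)$ along each ray yields a comparison among $K$-bodies at different indices: for $-m+1 \leq p \leq q$, I will obtain $K_{m+p}(w) \subset K_{m+q}(w) \subset C \frac{m+q}{m+p} K_{m+p}(w)$, hence $\dist(K_{m+p}(w), B_2^m) \leq C \frac{m+q}{m+p} \dist(K_{m+q}(w), B_2^m)$. Specializing to $q = \max(p, m)$ produces precisely the factor $\max(m/(m+p),1)$ in the statement.

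The technical crux is then to prove, for any convex body $K \subset \Real^m$ with appropriate centering and every $q \geq m$,
\[
c\, h_K(\theta) \vol(K)^{1/q} \leq h_{Z_q^+(K)}(\theta) \leq 2\, h_K(\theta) \vol(K)^{1/q} \quad \forall \theta \in S^{m-1},
\]
so that $\dist(Z_q^+(K), B_2^m)$ and $\dist(K, B_2^m)$ agree up to an absolute constant. The upper bound is immediate from $\scalar{x,\theta}_+ \leq h_K(\theta)$ on $K$. For the lower bound I will form the cone $C \subset K$ with apex at a point $p^* \in K$ attaining $\scalar{p^*,\theta} = R := h_K(\theta)$ and with base the central section $K_0 := K \cap \theta^\perp$. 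Fradelizi's inequality, applied to the one-dimensional log-concave function $s \mapsto \vol_{m-1}(K \cap \set{\scalar{\cdot,\theta}=s})$ whose barycenter sits at $0$, together with Gr\"unbaum's asymmetry bound $h_K(-\theta) \leq m\, h_K(\theta)$, yields $\vol_{m-1}(K_0) \geq c\, \vol(K)/(mR)$, hence $\vol(C) \geq c\, \vol(K)/m^2$. Scaling $C$ from $p^*$ gives $\vol(\set{x \in C : \scalar{x,\theta} > sR}) = (1-s)^m \vol(C)$, and a layer-cake integration produces $\int_K \scalar{x,\theta}_+^q dx \geq c R^q \vol(K)/m^2 \cdot qB(q,m+1)$. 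The elementary identity $qB(q,m+1) = 1/\binom{q+m}{m} \geq (m/(2eq))^m$ combined with $q \geq m$ collapses all remaining $q$- and $m$-dependent factors into absolute constants upon taking $q$-th roots.

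The main obstacle lies in justifying the centering assumption above: when $w$ is not even, $K = K_{m+q}(w)$ need not inherit the barycenter-at-origin property of $w$, so the one-dimensional marginal $s \mapsto \vol_{m-1}(K \cap \set{\scalar{\cdot,\theta}=s})$ may not have barycenter at $0$, and both Fradelizi's and Gr\"unbaum's inequalities demand replacement. This is precisely the content of Proposition \ref{prop:Z-K-main} (deferred to the Appendix), which supplies substitute ``approximately centered'' volume bounds for $K_{m+q}(w)$ from the barycenter hypothesis on $w$. With this in place, the assembly is direct: for $q := \max(p, m)$, the Borell comparison gives $\dist(K_{m+p}(w), B_2^m) \leq C\max(m/(m+p),1)\,\dist(K_{m+q}(w), B_2^m)$; the technical crux gives $\dist(K_{m+q}(w), B_2^m) \leq C'\,\dist(Z_q^+(K_{m+q}(w)), B_2^m)$; and the Paouris identity rewrites $Z_q^+(K_{m+q}(w)) = Z_q^+(w) = Z_{\max(p,m)}^+(w)$, completing the proof.
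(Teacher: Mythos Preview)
Your proof is essentially the same as the paper's: both combine the identity $Z_q^+(K_{m+q}(w)) = Z_q^+(w)$ (the paper's (\ref{eq:ZK-Z})), the $K$-body index comparison (the paper's (\ref{eq:K_q-prop}), which you derive via Borell), and Proposition~\ref{prop:Z-K-main}, applied at $q=\max(p,m)$ so that the Gamma-ratio in (\ref{eq:Z-K-main}) becomes an absolute constant. Your cone/Fradelizi/Gr\"unbaum paragraph sketches the centered-body case of Proposition~\ref{prop:Z-K-main} before you correctly observe that $K_{m+q}(w)$ need not be centered and defer to the proposition; the paper simply splits into the cases $p\geq m$ and $p<m$ and invokes the same three ingredients directly.
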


For the proof, we recall several useful properties of the bodies $K_q(w)$ and $Z^+_q(K)$.
First, it is known (see \cite{BarlowMarshallProschan,Ball-kdim-sections,Milman-Pajor-LK} for the even case and \cite[Lemmas 2.5,2.6]{KlartagPerturbationsWithBoundedLK} or \cite[Lemma 3.2 and (3.12)]{PaourisSmallBall} for the general one) that under the assumptions of Theorem \ref{thm:dist}:
\begin{equation} \label{eq:K_q-prop}
1 \leq q_1 \leq q_2 \;\;\; \Rightarrow \;\;\; e^{-m(\frac{1}{q_1}-\frac{1}{q_2})} \frac{K_{q_1}(w)}{w(0)^{1/q_1}} \subset \frac{K_{q_2}(w)}{w(0)^{1/q_2}} \subset \frac{\Gamma(q_2+1)^{1/q_2}}{\Gamma(q_1+1)^{1/q_1}} \frac{K_{q_1}(w)}{w(0)^{1/q_1}} ~.
\end{equation}
Second, integration in polar coordinates (cf. \cite{Paouris-IsotropicTail}) directly shows that:
\begin{equation} \label{eq:ZK-Z}
Z^+_q(K_{m+q}(w)) = Z^+_q(w) ~.
\end{equation}
Lastly, we require the following proposition, which is well-known in the even-case (e.g. \cite[Lemma 4.1]{Paouris-Small-Diameter}), but requires more work in the general one (note for instance that the barycenter of $K_{m+q}(w)$ below need not be at the origin); its proof is postponed to the Appendix.
\begin{prop} \label{prop:Z-K-main} For any $q \geq 1$:
\begin{equation} \label{eq:Z-K-main}
C_1 Z^+_q(K_{m+q}(w)) \subset \vol(K_{m+q}(w))^{1/q} K_{m+q}(w) \subset C_2 Z^+_q(K_{m+q}(w)) \brac{\frac{\Gamma(m+q+1)}{\Gamma(m)\Gamma(q+1)}}^{1/q}
\end{equation}
\end{prop}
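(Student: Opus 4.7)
Write $K := K_{m+q}(w)$, noting that $0 \in K$ by Ball's Theorem. The left-hand inclusion is elementary: for any $y \in \Real^m$ and any $x \in K$, the fact that $0 \in K$ gives $\langle x,y\rangle_+ \leq h_K(y)$, whence
\[
h_{Z^+_q(K)}(y)^q \;=\; 2\int_K \langle x,y\rangle_+^q\,dx \;\leq\; 2\,\vol(K)\, h_K(y)^q,
\]
so $Z^+_q(K) \subset 2^{1/q} \vol(K)^{1/q} K$, giving the left inclusion with $C_1 = 1/2$.

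For the right inclusion, fix $y \in S^{m-1}$, let $a := h_K(y)$ and $s(t) := \vol_{m-1}(K\cap\{\langle x,y\rangle=t\})$. By Brunn's principle $s^{1/(m-1)}$ is concave on $[-h_K(-y),a]$ with $s(a)=0$, so $s(t) \geq s(0)(1-t/a)^{m-1}$ on $[0,a]$. The Beta integral then yields
\[
h_{Z^+_q(K)}(y)^q \;=\; 2\int_0^a t^q s(t)\,dt \;\geq\; 2\,s(0)\,a^{q+1}\,\frac{\Gamma(q+1)\Gamma(m)}{\Gamma(m+q+1)},
\]
reducing the right inclusion to the geometric estimate $s(0)\cdot a \geq c\,\vol(K)$ for a universal constant $c > 0$.

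In the centrally-symmetric case ($w$ even, $s$ even and log-concave), $s$ attains its maximum at $0$, so $\vol(K) = \int_{-a}^{a}s \leq 2a\,s(0)$ and we are done. In the general case the barycenter assumption on $w$ enters via the observation that the body $K_{m+1}(w)$ has its centroid at the origin: polar coordinates give
\[
\int_{K_{m+1}(w)} x\,dx \;=\; \tfrac{1}{m+1}\int_{S^{m-1}}\omega\,\|\omega\|_{K_{m+1}(w)}^{-(m+1)}\,d\omega \;=\; \int_{\Real^m} x\,w(x)\,dx \;=\; 0.
\]
The plan is then to (i) derive a uniform asymmetry bound $h_K(-y) \leq C\,h_K(y)$ from this centroid property combined with the stability inclusions (\ref{eq:K_q-prop}), and (ii) apply Gr\"unbaum's Lemma \ref{lem:barycenter} to the one-dimensional marginal $\pi_y w$ of $w$ in direction $y$ (which inherits barycenter $0$) to obtain $s(0) \geq c\,\max_t s(t)$. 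Combined with the log-concave bound $\vol(K) = \int s \leq (a + h_K(-y))\max_t s$, this yields $\vol(K) \leq C a\, s(0)$, as required.

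The principal obstacle is step (i): converting the centroid-at-origin property of $K_{m+1}(w)$ into a \emph{universal} (independent of $q/m$) asymmetry control on $K_{m+q}(w)$. The stability estimates (\ref{eq:K_q-prop}) alone introduce constants depending on the ratio $q/m$, so one must instead descend to the one-dimensional marginal $\pi_y w$, which always has barycenter $0$, and apply Lemma \ref{lem:barycenter} together with the one-dimensional $K_q$-body inclusions. This careful bookkeeping — producing universal constants independent of both $m$ and $q$ — is exactly what the auxiliary lemmas in the appendix are designed to provide.
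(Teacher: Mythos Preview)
Your left inclusion is fine and matches the paper. The right inclusion, however, has a genuine gap.

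First, your stated reduction target ``$s(0)\,a \geq c\,\vol(K)$ with universal $c$'' is stronger than necessary: tracing your own inequality $\vol(K)\,a^q \leq 2 C_2^q\, s(0)\, a^{q+1}$ backwards, you only need $s(0)\,a \geq c^{\,q}\,\vol(K)$ for some universal $c>0$. This matters, because the intermediate estimates available here are all of $c^{\,q}$ type, not universal.

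Second, your step (i) --- a uniform asymmetry bound $h_{K_{m+q}(w)}(-y)\leq C\,h_{K_{m+q}(w)}(y)$ with $C$ independent of $m,q$ --- is simply false. Already for $m=1$ and $w(t)=e^{-1}e^{-t}\,1_{[-1,\infty)}$ (log-concave, barycenter at $0$), the interval $K_{1+q}(w)=[-b,a]$ has $a/b$ of order $q$, so no universal $C$ exists. Your step (ii) is also a non-sequitur as written: Gr\"unbaum's Lemma applied to $\pi_y w$ gives mass information about the marginal of $w$, not about $s = \pi_y \mathbf{1}_{K_{m+q}(w)}$, which is the section function of a \emph{different} body.

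The paper's route avoids both pitfalls. It never bounds $h_K(-y)/h_K(y)$; instead it controls the \emph{half-space volume ratio} $\vol(K_{m+q}(w)\cap H^+_\theta)/\vol(K_{m+q}(w))$ (Lemma \ref{lem:A3}). The key identity is that in polar coordinates $\vol(K_m(w)\cap H^+_\theta)=\int_{H^+_\theta} w$, so Gr\"unbaum on $\pi_\theta w$ gives this ratio $\geq 1/e$ for $K_m(w)$; then (\ref{eq:K_q-prop}) applied \emph{pointwise to the radial function raised to the $m$-th power} transfers this to $K_{m+q}(w)$ with a loss of only $c^{\,q}$ (not $c^{\,q/m}$ or worse). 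Once the half-space volume ratio is $\geq c^{\,q}$, Lemma \ref{lem:A2} (applied to the log-concave density $s/\vol(K)$, whose positive half has mass in $[c^{\,q},1-c^{\,q}]$) yields $s(0)/\|s\|_\infty \geq c^{\,q}$. Combining $a\,\|s\|_\infty \geq \int_0^a s = \vol(K\cap H^+_y)\geq c^{\,q}\vol(K)$ with $s(0)\geq c^{\,q}\|s\|_\infty$ gives exactly $s(0)\,a\geq c^{\,2q}\vol(K)$, which suffices. The moral: accept $c^{\,q}$ losses throughout, work with half-space \emph{volumes} rather than support-function asymmetry, and use Lemma \ref{lem:A2} to pass from the volume ratio to control of $s(0)$.
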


\begin{proof}[Proof of Theorem \ref{thm:dist}]
When $p \geq 1$, observe that (\ref{eq:Z-K-main}), (\ref{eq:ZK-Z}) and Stirling's formula, imply that:
\[
\dist(K_{m+p}(w) , B_2^m) \leq C \frac{p+m}{p} \dist(Z^+_p(w),B_2^m) ~,
\]
and so when $p \geq m$ the asserted claim follows. Otherwise, using (\ref{eq:K_q-prop}), Stirling's formula, (\ref{eq:Z-K-main}) and (\ref{eq:ZK-Z}), we see that if $q \geq \max(p,1)$ then:
\[
\dist(K_{m+p}(w) , B_2^m) \leq C_1 \frac{m+q}{m+p} \dist(K_{m+q}(w) , B_2^m) \leq C_2 \frac{m+q}{m+p} \frac{m+q}{q} \dist(Z^+_q(w),B_2^m) ~.
\]
Setting $q = m$, the case $p<m$ is also settled.
\end{proof}

The proof of Theorem \ref{thm:Lip} is now complete.

\section{Moment Estimates} \label{sec:moments}

Our goal in this section is to prove:
\begin{thm} \label{thm:moments-Y}
Let $X$ denote an isotropic random vector in $\Real^n$ with log-concave density, which is in addition $\psi_\alpha$ ($\alpha \in [1,2]$) with constant $b_\alpha$. Let $A \in M_{n}(\Real)$ satisfy $\norm{A}_{HS}^2 = n$, and set $Y := (A X + G_n)/\sqrt{2}$, where $G_n$ is an independent standard Gaussian random vector in $\Real^n$.
Then for any $\abs{p} \leq c \n^{\alpha/2}$:
\begin{equation} \label{eq:moments-Y}
1 - C \brac{\frac{\abs{p-2}}{\n^{\frac{\alpha}{2}}}}^{\frac{1}{\alpha+1}} \leq \frac{\brac{\E|Y|^p}^{\frac{1}{p}}}{\brac{\E|Y|^2}^{\frac{1}{2}}} \leq 1 + C \brac{\frac{\abs{p-2}}{\n^{\frac{\alpha}{2}}}}^{\frac{1}{\alpha+1}} ~.
\end{equation}
where $\n$ was defined in (\ref{eq:n0-def}).
\end{thm}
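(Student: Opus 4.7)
The plan is to follow the scheme outlined in the introduction, which refines Fleury's approach \cite{FleuryImprovedThinShell} via two modifications: using our improved log-Lipschitz estimate from Corollary \ref{cor:Lip} in place of $L \lesssim k$, and optimizing the auxiliary dimension $k$ separately for each $p$. The first step is Fleury's projection inequality (\ref{eq:Gaussians}): for $p \geq 2$ it gives
\[
\frac{(\E\abs{Y}^p)^{1/p}}{(\E\abs{Y}^2)^{1/2}} \leq R_{k,p} := \frac{(\E_U h_{k,p}(U))^{1/p}}{(\E_U h_{k,2}(U))^{1/2}},
\]
while the same Jensen argument run in reverse yields the opposite inequality for $-k+1 \leq p \leq 2$. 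Hence it suffices to show $\abs{\log R_{k,p}} \leq C (\abs{p-2}/\n^{\alpha/2})^{1/(\alpha+1)}$ for a suitable $k = k(p)$, with $k > \abs{p}$ required when $p < 0$ so that $h_{k,p}$ is well defined.

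Two ingredients then control $R_{k,p}$. Pointwise in $u$, Borell's inequality (\ref{eq:Borell-concave}) gives concavity of $p \mapsto \log(h_{k,p}(u)/\Gamma(k+p))$; extrapolating (for $p \geq 2$) or interpolating (for $p \leq 2$) between $p = 2$ and an auxiliary exponent bounds $h_{k,p}(u)$ pointwise by a product of powers of $h_{k,2}(u)$ and $h_{k,q}(u)$ for a suitable $q$. Integrating against the Haar measure, and then invoking the reverse H\"older inequality (\ref{eq:reverse-Holder}) derived from the log-Sobolev inequality on $SO(n)$ --- in the form $\E_U f \leq e^{CL^2/n}\exp(\E_U\log f)$ applied to $f = h_{k,q}$ --- collapses the resulting higher $u$-moments back to $\E_U h_{k,q}(U)$ at the cost of exponential factors $\exp(C L_{k,q}^2 \Delta/n)$. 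Corollary \ref{cor:Lip} provides
\[
L_{k,p} \leq C \norm{A}_{op} b_\alpha \max(k,p)^{1/\alpha + 1/2},
\]
and since $\norm{A}_{op}^2 b_\alpha^2 = n/\n$ by (\ref{eq:n0-def}), these factors become $\exp(C k^{2/\alpha + 1} \Delta/\n)$ in the regime $k \geq p$.

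The resulting estimate on $\abs{\log R_{k,p}}$ consists of a Borell tangent/interpolation error that decreases with $k$, plus a concentration error of the schematic form $\abs{p-2}\, k^{2/\alpha + 1}/\n$. Optimizing $k$ for each $p$ --- the novelty over Fleury, who fixed one $k$ in (\ref{eq:Gaussians}) --- balances the two contributions at $k \sim (\n/\abs{p-2})^{\alpha/(\alpha+1)}$, where both errors collapse to $C(\abs{p-2}/\n^{\alpha/2})^{1/(\alpha+1)}$, yielding the claim. The assumption $\abs{p} \leq c \n^{\alpha/2}$ keeps this optimal $k$ in the admissible window $(\abs{p}, n]$; the neighborhood of $p = 0$ is handled by continuity with the convention $(\E\abs{Y}^0)^{1/0} := \exp(\E\log\abs{Y})$.

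The main obstacle is the bookkeeping needed to extract the sharp exponent $1/(\alpha+1)$: the Borell extrapolation, the H\"older split, and the reverse H\"older inequality must be balanced so that the $k$-dependence of the concentration error is exactly $k^{2/\alpha+1}$, with no extraneous logarithms or $p$-factors. A second delicate point is the matching \emph{lower} bound on $R_{k,p}$ for $p \leq 2$ (in particular for $p < 0$): Borell's concavity directly yields only one-sided extrapolation, so the lower bound must be produced by interpolation against a distant second reference exponent combined with reverse H\"older applied to the geometric mean in the form $\exp(\E_U\log f) \geq e^{-CL^2/n}\E_U f$. The Gaussian smoothing in $Y = (AX + G_n)/\sqrt{2}$ is essential throughout: it guarantees pointwise positivity and finiteness of $h_{k,p}(u)$ uniformly in $u \in SO(n)$, which is what permits the admissible range to extend all the way to $\abs{p} = c\n^{\alpha/2}$ rather than just $\abs{p} = c\sqrt{\n}$.
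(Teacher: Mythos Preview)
Your scheme has a genuine gap: the direct (Fleury-style) bound on $R_{k,p}$ cannot produce the exponent $1/(\alpha+1)$ in $|p-2|$, and your own arithmetic reveals this. Carrying out the Borell extrapolation from the reference points $0,2$ to $p$ gives a Gamma-ratio whose logarithm is $\sim (p-2)/k$ (a Stirling computation); the reverse H\"older step, whether applied to $h_{k,2}^{p/2}h_{k,0}^{-(p-2)/2}$ (whose log-Lipschitz constant is $\sim pL$) or via a H\"older split and then (\ref{eq:reverse-Holder}) with moment-gap $\sim p$, contributes $\sim (p-2)L^2/n \sim (p-2)k^{2/\alpha+1}/\n$ to $\log R_{k,p}$. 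Both terms are \emph{linear} in $|p-2|$, so the optimal $k$ solving $1/k = k^{2/\alpha+1}/\n$ is $k=\n^{\alpha/(2(\alpha+1))}$, \emph{independent of $p$}, and the resulting bound is $\log R_{k,p}\lesssim |p-2|/\n^{\alpha/(2(\alpha+1))}$ --- not the target $|p-2|^{1/(\alpha+1)}/\n^{\alpha/(2(\alpha+1))}$. Your claimed balance $k\sim(\n/|p-2|)^{\alpha/(\alpha+1)}$ does not follow from the error terms you wrote down (plug it in: the concentration term becomes $|p-2|^{-1/(\alpha+1)}\n^{1/(\alpha+1)}$, the reciprocal of what you want), and even if it did, that $k$ drops below $|p|$ once $|p|\gtrsim\n^{\alpha/(2\alpha+1)}$, well short of the required $c\n^{\alpha/2}$.

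What the paper actually does is \emph{differentiate} in $p$: the entropy identity $\frac{d}{dp}\log(\E_\mu f^p)^{1/p}=\mathrm{Ent}_\mu(f^p)/(p^2\E_\mu f^p)$ introduces a factor $1/p^2$ into the log-Sobolev contribution, yielding
\[
\frac{d}{dp}\log(\E|Y|^p)^{1/p}\leq C\Bigl(\frac{k^{2/\alpha+1}}{p^2\n}+\frac{1}{k}\Bigr).
\]
Now the two terms are no longer both linear in $p$, the per-$p$ optimal $k\sim |p|^{\alpha/(\alpha+1)}\n^{\alpha/(2(\alpha+1))}$ genuinely depends on $p$, and \emph{integrating} the resulting derivative bound $C/(|p|^{\alpha/(\alpha+1)}\n^{\alpha/(2(\alpha+1))})$ over $[2,p]$ produces the correct $|p-2|^{1/(\alpha+1)}$. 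Finally, ``by continuity'' is not enough near $p=0$: the derivative bound above blows up there (the $1/p^2$ persists for any admissible $k$), and the paper closes the gap $[-p_0,p_0]$ with $p_0\sim\n^{-1/2}$ by a separate argument comparing $h_{k_0,\pm p_0}$ via Borell at a fixed small $k_0$.
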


Note that by the Pr\'ekopa--Leindler Theorem, $Y$ itself has log-concave density. We also remark that it is possible to improve the moment estimates in the range $1 \leq \abs{p-2} \leq c_1 \n^{\frac{\alpha}{2(\alpha+2)}}$ exactly as in Theorem \ref{thm:intro-moments}, but we do not insist on this here.

\subsection{Passing to $SO(n)$}

We start by repeating the argument of Fleury for passing from integration on $\Real^n$ to $SO(n)$. Let $0 \neq \abs{p} \leq \frac{n-1}{2}$, and let $k$ denote an integer
 between $2$ and $n$  to be determined later on, so that in addition $\abs{p} \leq \frac{k-1}{2}$. Since $|x|^p = a_{n,k,p} \E_F |P_F x|^p$, where $F$ is uniformly distributed on $G_{n,k}$ (according to its Haar probability measure), we have:
\[
\frac{\E |Y|^p}{\E |G_n|^p} = \frac{\E \E_F |P_F Y|^p}{\E \E_F |P_F G_n|^p} = \frac{\E \E_F |P_F Y|^p}{\E |G_k|^p} ~,
\]
where $G_i$ denotes a standard Gaussian random vector on $\Real^i$. A direct calculation
shows that:
\[
\E |G_i|^p = 2^{\frac{p}{2}} \frac{\Gamma((p+i)/2)}{\Gamma(i/2)} ~,
\]
and hence:
\begin{equation} \label{eq:so1}
\E |Y|^p = \frac{\Gamma((p+n)/2)\Gamma(k/2)}{\Gamma(n/2)\Gamma((p+k)/2)} \E \E_F |P_F Y|^p ~.
\end{equation}
Passing to polar coordinates on $F \in G_{n,k}$ and using the invariance of the Haar measures on $G_{n,k}$, $S(F)$ and $SO(n)$ under the action of $SO(n)$, we verify that:
\begin{equation} \label{eq:so2}
\E \E_F |P_F Y|^p = \E_{U} h_{k,p}(U) ~,
\end{equation}
where $U$ is uniformly distributed on $SO(n)$.

\subsection{Controlling the derivative}

We now deviate from Fleury's argument and proceed to estimate:
\begin{equation} \label{eq:main-deriv}
\frac{d}{dp} \log( (\E |Y|^p)^{\frac{1}{p}} ) = \frac{d}{dp} \log((\E_{U} h_{k,p}(U))^{\frac{1}{p}}) + \frac{d}{dp} \brac{\frac{1}{p}  \log \frac{\Gamma((p+n)/2)\Gamma(k/2)}{\Gamma(n/2)\Gamma((p+k)/2)}} ~.
\end{equation}
Given $u \in SO(n)$, we introduce the (non-probability) measure $\mu_{u}$ on $\Real_+$ having density $\vol(S^{k-1}) t^{k-1} \pi_{u(F_0)} g (t u(\theta_0))$, where $g$ is the density of $Y$ on $\Real^n$. We define the (probability) measure $\mu_{k,p} := \E_U \mu_U$ on $\Real_+$, and write:
\[
h_{k,p}(u) = \E_{\mu_u}(t^p) ~,~ \E_{U} h_{k,p}(U) = \E_U \E_{\mu_U}(t^p) = \E_{\mu_{k,p}}(t^p) ~.
\]
Here and in the sequel we use the following convention: given a measure space $(\Omega,\mu)$, which does not necessarily have total mass $1$, and a measurable $f : \Omega \rightarrow \Real_+$, we set:
\[
\E_\mu f = \E_\mu(f) = \int f d\mu ~~,~~ \Ent_\mu(f) = \E_\mu(f \log f) - \E_\mu(f) \log(\E_\mu(f)) ~.
\]
A useful fact, easily verified by direct calculation, is that:
\[
\frac{d}{dp} \log ((\E_\mu f^p)^{\frac{1}{p}}) = \frac{1}{p^2} \frac{\Ent_\mu(f^p)}{\E_\mu(f^p)} ~.
\]

We proceed with estimating (\ref{eq:main-deriv}). As explained:
\begin{equation} \label{eq:main-deriv2}
\frac{d}{dp} \log((\E_{U} h_{k,p}(U))^{\frac{1}{p}}) = \frac{1}{p^2} \frac{\Ent_{\mu_{k,p}}(t^p)}{\E_{\mu_{k,p}}(t^p)} = \frac{1}{p^2} \frac{\Ent_{\mu_{k,p}}(t^p)}{\E_{U} h_{k,p}(U)} ~.
\end{equation}
Our main idea here is to decompose the numerator as follows:
\begin{equation} \label{eq:decomp}
\Ent_{\mu_{k,p}}(t^p) = \E_{U} \Ent_{\mu_U}(t^p) + \Ent_U \E_{\mu_U}(t^p) = \E_{U} \Ent_{\mu_U}(t^p) + \Ent_U h_{k,p}(U) ~.
\end{equation}

The contribution of the second term in (\ref{eq:decomp}) is controlled using the log-Sobolev inequality (\ref{eq:log-Sob}):
\begin{equation} \label{eq:es1}
\frac{1}{p^2} \frac{\Ent_U h_{k,p}(U)}{\E_U h_{k,p}(U)} \leq \frac{c}{p^2 n} \frac{\E_U (|\nabla \log h_{k,p}|^2(U) h_{k,p}(U))}{\E_U h_{k,p}(U)} \leq \frac{c L_{k,p}^2}{p^2 n} ~,
\end{equation}
where recall $L_{k,p}$ denotes the log-Lipschitz constant of $u \mapsto h_{k,p}(u)$. To control the contribution of the first term in (\ref{eq:decomp}), we first write given $u \in SO(n)$:
\[
\frac{1}{p^2} \frac{\Ent_{\mu_u}(t^p)}{\E_{\mu_u}(t^p)} = \frac{d}{dp} \log((\E_{\mu_u} t^p)^{\frac{1}{p}}) = \frac{d}{dp} \frac{1}{p} \brac{\log \frac{h_{k,p}(u)}{\Gamma(k+p)} - \log \frac{h_{k,0}(u)}{\Gamma(k)} + \log \frac{\Gamma(k+p)}{\Gamma(k)} + \log h_{k,0}(u)} ~.
\]
By Borell's concavity result (\ref{eq:Borell-concave}), we realize that:
\[
\frac{d}{dp} \frac{1}{p} \brac{\log \frac{h_{k,p}(u)}{\Gamma(k+p)} - \log \frac{h_{k,0}(u)}{\Gamma(k)}} \leq 0 ~,
\]
and hence:
\[
\frac{1}{p^2} \frac{\Ent_{\mu_u}(t^p)}{\E_{\mu_u}(t^p)} \leq \frac{d}{dp} \brac{\frac{1}{p} \log \frac{\Gamma(k+p)}{\Gamma(k)}} - \frac{1}{p^2} \log h_{k,0}(u) ~.
\]
Plugging this estimate back into (\ref{eq:main-deriv2}) and (\ref{eq:decomp}), we obtain:
\begin{equation} \label{eq:es2}
\frac{1}{p^2} \frac{\E_{U} \Ent_{\mu_U}(t^p)}{\E_U \E_{\mu_U}(t^p)} \leq \frac{d}{dp} \brac{\frac{1}{p} \log \frac{\Gamma(k+p)}{\Gamma(k)}} + \frac{1}{p^2} \frac{\E_U \log(1/ h_{k,0}(U)) h_{k,p}(U)}{\E_U h_{k,p}(U)} ~.
\end{equation}
By using the Jensen and Cauchy--Schwarz inequalities, we bound the second term by:
\[
\frac{\E_U \log(1/ h_{k,0}(U)) h_{k,p}(U)}{\E_U h_{k,p}(U)} \leq \log \brac{\frac{\E_U \frac{h_{k,p}(U)}{h_{k,0}(U)}}{\E_U h_{k,p}(U)}} \leq \log\brac{\frac{(\E_U h_{k,p}(U)^2)^{1/2}}{\E_U h_{k,p}(U)} (\E_U h_{k,0}(U)^{-2})^{1/2}} ~.
\]
We now use the reverse H\"{o}lder inequality (\ref{eq:reverse-Holder}) for comparing the various moments above. Denoting $\norm{f}_{q} := (\E_U |f(U)|^q)^{1/q}$, we have:
\[
\norm{h_{k,p}}_2 \leq \exp\brac{\frac{C L_{k,p}^2}{n}} \norm{h_{k,p}}_1 ~,~
\]
\[
\norm{h_{k,0}^{-1}}_2 \leq \exp\brac{\frac{2 C L_{k,0}^2}{n}} \norm{h_{k,0}^{-1}}_0 =
\exp\brac{\frac{2 C L_{k,0}^2}{n}} \frac{1}{\norm{h_{k,0}}_0} \leq \exp\brac{\frac{3 C L_{k,0}^2}{n}} \frac{1}{\norm{h_{k,0}}_1} ~.
\]
Since $\norm{h_{k,0}}_1 = \E_U h_{k,0}(U) = \E_{\mu_{k,p}}(1) = 1$, we conclude that:
\begin{equation} \label{eq:es3}
\frac{1}{p^2} \frac{\E_U \log(1/ h_{k,0}(U)) h_{k,p}(U)}{\E_U h_{k,p}(U)} \leq \frac{C}{p^2 n} (L_{k,p}^2 + 3 L_{k,0}^2) ~.
\end{equation}

Now, plugging all the estimates (\ref{eq:es1}), (\ref{eq:es2}), (\ref{eq:es3}) into (\ref{eq:main-deriv2}) using the decomposition (\ref{eq:decomp}), and plugging the result into (\ref{eq:main-deriv}), we obtain:
\[
\frac{d}{dp} \log( (\E |Y|^p)^{\frac{1}{p}} ) \leq \frac{c}{p^2 n}( 2 L_{k,p}^2 + 3 L_{k,0}^2) + \frac{d}{dp} \brac{\frac{1}{p} \log \frac{\Gamma(k+p)}{\Gamma(k)}} + \frac{d}{dp} \brac{\frac{1}{p} \log \frac{\Gamma((p+n)/2)\Gamma(k/2)}{\Gamma(n/2)\Gamma((p+k)/2)}} ~.
\]

\subsection{Optimizing on the dimension}

As observed by Fleury in \cite{FleuryImprovedThinShell}, using that the function $\frac{d}{dp} \log \Gamma(p)$ is concave, one easily verifies that the last term above satisfies:
\begin{equation} \label{eq:Gamma-decr}
\frac{d}{dp} \brac{\frac{1}{p} \log \frac{\Gamma((p+n)/2)\Gamma(k/2)}{\Gamma(n/2)\Gamma((p+k)/2)}} \leq 0 ~.
\end{equation}
Since the contribution of this term is insignificant relative to the second one, we simply use (\ref{eq:Gamma-decr}) as an upper bound.
For the second term, for any $q \neq 0$ having the same sign as $p$ and such that $k+p+q > 0$, we estimate using Jensen's inequality:
\begin{eqnarray*}
\frac{d}{dp} \brac{\frac{1}{p} \log \frac{\Gamma(k+p)}{\Gamma(k)}} = \frac{1}{p q} \frac{\int_0^\infty \log(t^q) t^{p+k-1} \exp(-t) dt}{\Gamma(p+k)} - \frac{1}{p^2} \log \frac{\Gamma(k+p)}{\Gamma(k)} \\
\leq \frac{1}{p q} \log \frac{\Gamma(k+p+q)}{\Gamma(k+p)} - \frac{1}{p^2} \log \frac{\Gamma(k+p)}{\Gamma(k)} =
\frac{1}{p} \log \brac{\frac{\Gamma(k+p+q)^{1/q}}{\Gamma(k+p)^{1/q}} \frac{\Gamma(k)^{1/p}}{\Gamma(k+p)^{1/p}}} ~.
\end{eqnarray*}
Applying Stirling's formula, setting $q = (p+k-1) \frac{p}{k-1}$, which indeed satisfies the above restrictions since $p \geq -\frac{k-1}{2}$, and using the latter condition on $p$, one verifies that:
\begin{equation} \label{eq:Stirling-Bound}
\frac{d}{dp} \brac{\frac{1}{p} \log \frac{\Gamma(k+p)}{\Gamma(k)}} \leq \frac{C}{k} ~;
\end{equation}
see also Remark \ref{rem:weaker-concavity} below for an alternative derivation.
Plugging our estimates for $L_{k,q}$ obtained in Corollary \ref{cor:Lip}, and noting that $\norm{A}_{op} \geq 1$ since $\norm{A}_{HS}^2 = n$, we conclude that if $X$ is $\psi_\alpha$ ($\alpha \in [1,2]$) with constant $b_\alpha$, then:
\begin{equation} \label{eq:key}
\frac{d}{dp} \log( (\E |Y|^p)^{\frac{1}{p}} ) \leq C \brac{\frac{b_\alpha^2 \norm{A}_{op}^2 k^{1 + 2/\alpha}}{p^2 n} + \frac{1}{k}} = C \brac{\frac{k^{1 + 2/\alpha}}{p^2 \n} + \frac{1}{k}} ~,
\end{equation}
for all integers $k$ in $[\max(2,2\abs{p}+1),n]$.
Optimizing on $k$ in that range, we set:
\[
k = \lceil \abs{p}^{1/\beta} \n^{1/(2\beta)} \rceil  ~,~ \beta := 1 + \frac{1}{\alpha} ~,
\]
which is guaranteed to be in the desired range whenever $\abs{p} \in [4 \n^{-1/2} , \frac{1}{64} \n^{\alpha/2}]$, as may be easily verified using that $\norm{A}_{op} \geq 1$ and $b_\alpha \geq 2^{-1/\alpha}$. Consequently, for such $p$, we obtain:
\[
\frac{d}{dp} \log( (\E |Y|^p)^{\frac{1}{p}} ) \leq \frac{C_2}{\abs{p}^{1/\beta}  \n^{1/(2\beta)}} ~.
\]
Setting $p_0 := 4 \n^{-1/2}$, we may assume that $p_0 \leq 2$ since $\n$ was assumed in the Introduction to be large enough (otherwise the statement of Theorem \ref{thm:moments-Y} follows easily), and so integrating over $p$ and adjusting constants, we obtain:
\begin{equation} \label{eq:pos-moments}
\exp\brac{- C \brac{\frac{\abs{p-2}}{\n^{\frac{\alpha}{2}}}}^{\frac{1}{\alpha+1}}} \leq \frac{\brac{\E|Y|^p}^{\frac{1}{p}}}{\brac{\E|Y|^2}^{\frac{1}{2}}} \leq \exp\brac{C \brac{\frac{\abs{p-2}}{\n^{\frac{\alpha}{2}}}}^{\frac{1}{\alpha+1}}} \;\;\; \forall p \in [p_0 , \frac{1}{64} \n^{\alpha/2}] ~,
\end{equation}
and:
\begin{equation} \label{eq:neg-moments}
 \frac{\brac{\E|Y|^p}^{\frac{1}{p}}}{\brac{\E|Y|^{-p_0}}^{-\frac{1}{p_0}}} \geq \exp\brac{- C \brac{\frac{\abs{p-p_0}}{\n^{\frac{\alpha}{2}}}}^{\frac{1}{\alpha+1}}} \;\;\; \forall p \in [-\frac{1}{64} \n^{\alpha/2},-p_0] ~.
\end{equation}

\subsection{Moments near $0$}

It remains to bridge the gap between the $p_0$ and $-p_0$ moments. Note that since we assume that $p_0 \leq 2$ and that $n$ is larger than some constant, then $p_0 \leq \frac{k_0-1}{2}$ for e.g. $k_0 = 5$.
Unfortunately, in the range $p \in [-p_0,p_0]$, our key estimate (\ref{eq:key}) only yields (using $k=k_0$):
\begin{equation} \label{eq:wasteful}
\frac{d}{dp} \log( (\E |Y|^p)^{\frac{1}{p}} ) \leq \frac{C}{p^2 \n} ~,
\end{equation}
which in particular is not integrable at $0$. We consequently treat this gap differently, by reproducing Fleury's argument from \cite{FleuryImprovedThinShell}.

Note that by Borell's concavity result (\ref{eq:Borell-concave}), we have:
\[
h_{k_0,p_0}^{\frac{1}{2}}(u) h_{k_0,-p_0}^{\frac{1}{2}}(u) \leq \frac{\brac{\Gamma(k_0+p_0) \Gamma(k_0-p_0)}^{\frac{1}{2}}}{\Gamma(k_0)} h_{k_0,0}(u) \leq (1+C_2 p_0^2) h_{k_0,0}(u) ~.
\]
Taking expectation, denoting by $\Cov$ the covariance, and using the Cauchy--Schwarz inequality, we obtain:
\begin{eqnarray*}
(1+C_2 p_0^2) & \geq & \E_U h_{k_0,p_0}^{\frac{1}{2}}(U) \E_U h_{k_0,-p_0}^{\frac{1}{2}}(U) + \Cov_U (h_{k_0,p_0}^{\frac{1}{2}}(U),  h_{k_0,-p_0}^{\frac{1}{2}}(U)) \\
&\geq & \E_U h_{k_0,p_0}^{\frac{1}{2}}(U) \E_U h_{k_0,-p_0}^{\frac{1}{2}}(U) - \sqrt{\Var_U(h_{k_0,p_0}^{\frac{1}{2}}(U)) \Var_U(h_{k_0,-p_0}^{\frac{1}{2}}(U))} \\
& = & \E_U h_{k_0,p_0}^{\frac{1}{2}}(U) \E_U h_{k_0,-p_0}^{\frac{1}{2}}(U) \\
&  & - \brac{\E_U h_{k_0,p_0}(U) - (\E_U h_{k_0,p_0}^{\frac{1}{2}}(U))^2}^{\frac{1}{2}} \brac{\E_U h_{k_0,-p_0}(U) - (\E_U h_{k_0,-p_0}^{\frac{1}{2}}(U))^2}^{\frac{1}{2}} ~.
\end{eqnarray*}
Using the reverse H\"{o}lder inequality (\ref{eq:reverse-Holder}) for comparing the $L_{1/2}$ and $L_1$ norms of $h_{k_0,p_0}$ and $h_{k_0,-p_0}$, we obtain:
\[
(1+C_2 p_0^2) \geq \brac{\E_U h_{k_0,p_0}(U) \E_U h_{k_0,-p_0}(U)}^{1/2} \brac{\exp\brac{- \frac{C}{2} \frac{L_{k_0,p_0}^2 + L_{k_0,-p_0}^2}{n}} - C \frac{L_{k_0,p_0} L_{k_0,-p_0}}{n}} ~.
\]
By Corollary \ref{cor:Lip} we know that $L_{k_0,p_0} , L_{k_0,-p_0} \leq C_3 \norm{A}_{op} b_\alpha k_0^{1/\alpha + 1/2}$, and we conclude that:
\[
\frac{(\E_U h_{k_0,p_0}(U))^{\frac{1}{p_0}}}{(\E_U h_{k_0,-p_0}(U))^{-\frac{1}{p_0}}} \leq \brac{1 + \frac{C_4}{\n}}^{\frac{2}{p_0}} \leq 1+ \frac{C_5}{\sqrt{\n}} ~.
\]
Finally, using (\ref{eq:so1}), (\ref{eq:so2}) and (\ref{eq:Gamma-decr}), we see that:
\[
\frac{(\E |Y|^{p_0})^{\frac{1}{p_0}}}{(\E |Y|^{-p_0})^{-\frac{1}{p_0}}}  \leq \frac{(\E_U h_{k_0,p_0}(U))^{\frac{1}{p_0}}}{(\E_U h_{k_0,-p_0}(U))^{-\frac{1}{p_0}}}  \leq 1+ \frac{C_5}{\sqrt{\n}} ~.
\]
This fills the remaining gap, and together with (\ref{eq:pos-moments}) and (\ref{eq:neg-moments}), the assertion of Theorem \ref{thm:moments-Y} follows.

\begin{rem}
Examining the proof in the case $\alpha=1$ and $A=Id$, it is easy to verify that if the log-Lipschitz constant $L_{k,p}$ of $h_{k,p} : SO(n) \rightarrow \Real_+$ satisfies:
\[
2 \leq p \leq k \;\;\; \Rightarrow \;\;\; L_{k,p} \leq C p^\beta k^\gamma ~,~ \beta,\gamma \in \Real ~,
\]
then the sharp large-deviation estimate $\P(|X| \geq C \sqrt{n}) \leq \exp(-\sqrt{n})$ is recovered if and only if $\beta + \gamma = 3/2$. Of course, since $p \leq k$, it is better to have larger $\beta$, and this affects the resulting thin-shell estimate. Our estimates yield $\beta=0$ and $\gamma=3/2$. The wasteful bound (\ref{eq:wasteful}) when $p$ is close to $0$ perhaps suggests that we should expect to have $\beta=1$ and $\gamma=1/2$.
\end{rem}

\begin{rem} \label{rem:weaker-concavity}
It is possible to avoid the delicate calculation based on Stirling's formula leading to the bound (\ref{eq:Stirling-Bound}), by replacing Borell's concavity result (\ref{eq:Borell-concave}) in our derivation above by a slightly weaker concavity result due to Bobkov \cite{Bobkov-SpectralGapForSphericallySymmetric}. It states that for any log-concave density $w$ on $\Real_+$:
\[
 q \mapsto \log \frac{\int_0^\infty t^{q} w(t) dt}{q^q} \text{ is concave on $\Real_+$} ~.
\]
\end{rem}

\section{Deviation Estimates} \label{sec:deviation}

A completely standard consequence of Theorem \ref{thm:moments-Y} is the following:
\begin{thm} \label{thm:deviation-Y}
With the same assumptions and notation as in Theorem \ref{thm:moments-Y}:
\begin{equation} \label{eq:large-deviation-Y}
\P(|Y| \geq (1+t) \sqrt{n}) \leq \exp(-c \n^{\frac{\alpha}{2}} \min(t^{2+\alpha},t)) \;\;\; \forall t \geq 0 ~,
\end{equation}
and:
\begin{equation} \label{eq:small-ball-Y}
\P(|Y| \leq (1-t) \sqrt{n}) \leq C \exp(-c \n^{\frac{\alpha}{2}} \max(t^{2+\alpha},\log\frac{c_2}{1-t})) \;\;\; \forall t \in [0,1] ~.
\end{equation}
\end{thm}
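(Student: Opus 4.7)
The plan is to derive Theorem~\ref{thm:deviation-Y} from Theorem~\ref{thm:moments-Y} by a Chernoff-Markov argument applied to $|Y|^p$ (upper tail, $p>0$) and $|Y|^{-p}$ (lower tail, $p>0$), with the exponent $p$ optimized according to the regime of $t$. A preliminary observation fixes the normalization: since $\|A\|_{HS}^2 = n$ and $G_n$ is independent of $X$, we have $\E|Y|^2 = \tfrac{1}{2}(\|A\|_{HS}^2 + n) = n$, so the reference scale $(\E|Y|^2)^{1/2}=\sqrt{n}$ in Theorem~\ref{thm:moments-Y} matches the one in the deviation bound.

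\textbf{Upper tail.} For any admissible $p \in [2, c\n^{\alpha/2}]$, Markov combined with \eqref{eq:moments-Y} gives
\[
\P(|Y|\ge (1+t)\sqrt{n}) \le \left(\frac{1+C(p/\n^{\alpha/2})^{1/(\alpha+1)}}{1+t}\right)^p \le \exp\!\bigl(p\bigl[C(p/\n^{\alpha/2})^{1/(\alpha+1)} - \log(1+t)\bigr]\bigr).
\]
For $t\in[0,t_*]$ with $t_*$ a small absolute constant, use $\log(1+t)\ge t/2$ and balance by choosing $p = c_1 t^{\alpha+1}\n^{\alpha/2}$, which lies in the admissible range. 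The exponent becomes of order $-t\cdot p = -c_2 t^{\alpha+2}\n^{\alpha/2}$, matching the $t^{2+\alpha}$ piece of the claimed $\min$. For $t\ge t_*$ we need the linear-in-$t$ decay; I would invoke Paouris's sharp large-deviation estimate \eqref{eq:Paouris-large-dev-alpha} after verifying the hypotheses for $Y$: by Pr\'ekopa--Leindler, $Y$ has log-concave density; it is centered with $\E|Y|^2=n$; and since $Y=(AX+G_n)/\sqrt{2}$ with $AX$ being $\psi_\alpha$ with constant $\|A\|_{op}b_\alpha$ and $G_n$ being $\psi_2$, the vector $Y$ is $\psi_\alpha$ with an appropriate constant. (A crude alternative is $|Y|\le(|AX|+|G_n|)/\sqrt{2}$, then Paouris applied to $|AX|$ and standard Gaussian concentration to $|G_n|$.)

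\textbf{Lower tail.} Apply Markov to $|Y|^{-p}$ for $p>0$ admissible, using the lower bound in \eqref{eq:moments-Y}:
\[
\P(|Y|\le (1-t)\sqrt{n}) \le \left(\frac{1-t}{1-C(p/\n^{\alpha/2})^{1/(\alpha+1)}}\right)^p.
\]
For small $t$, again choose $p = c_1 t^{\alpha+1}\n^{\alpha/2}$ to produce the $\exp(-c\n^{\alpha/2} t^{2+\alpha})$ term. For $t$ close to $1$, I would take $p=c_3\n^{\alpha/2}$ with $c_3$ small enough that the denominator is bounded below by some absolute constant $c' \in (0,1)$; this gives $\P(|Y|\le (1-t)\sqrt{n}) \le ((1-t)/c')^{c_3\n^{\alpha/2}} = \exp(-c_3\n^{\alpha/2}\log(c'/(1-t)))$, which absorbs into the $\log(c_2/(1-t))$ term of \eqref{eq:small-ball-Y}. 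Taking the maximum of the two estimates over the two regimes yields the $\max$ inside the exponent.

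\textbf{Main obstacle.} The linear-in-$t$ upper tail for $t$ large cannot be extracted from \eqref{eq:moments-Y} alone: a Markov bound at $p=c\n^{\alpha/2}$ produces only $\exp(-c\n^{\alpha/2}\log(1+t))$. The cleanest fix is to delegate this range to Paouris's estimate \eqref{eq:Paouris-large-dev-alpha}, which is why care is needed to check that $Y$ falls under its hypotheses (or to decompose $|Y|$ into the $AX$ and $G_n$ pieces). Everywhere else the argument is a routine optimization and the pieces glue together by choosing $t_*$ large enough that the Paouris bound beats the Markov bound and small enough that $t_*^{2+\alpha}$ and $t_*$ are comparable.
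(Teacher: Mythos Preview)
Your proposal is correct and follows the paper's argument almost exactly for the small-$t$ upper tail, the small-$t$ lower tail, and the $t\to 1$ lower tail (the paper chooses $p$ via the relation $t = 2C(|p-2|/\n^{\alpha/2})^{1/(\alpha+1)}$ rather than writing the Chernoff exponent explicitly, but this is the same optimization).

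The one substantive difference is the large-$t$ upper tail. You flag this as the ``main obstacle'' and resolve it by invoking Paouris's estimate \eqref{eq:Paouris-large-dev-alpha} for $Y$ (or by splitting $|Y|$ into $|AX|$ and $|G_n|$). This works, but it is heavier than necessary and somewhat against the spirit of the paper, one of whose stated goals is to \emph{recover} Paouris's estimate as a corollary. The paper instead uses Borell's lemma: the moment bound at $p_1 = c\n^{\alpha/2}$ already gives $\P(|Y|\ge (1+t_0)\sqrt{n})\le \exp(-c'\n^{\alpha/2})$ at a fixed $t_0\in(0,1]$, and since $Y$ is log-concave, Borell's lemma (applied to the ball $\{|y|\le(1+t_0)\sqrt{n}\}$ of mass $\ge 1-\exp(-c'\n^{\alpha/2})$) immediately propagates this to $\P(|Y|\ge (1+t)\sqrt{n})\le\exp(-c''\n^{\alpha/2}t)$ for all $t\ge t_0$. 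So your ``cannot be extracted from \eqref{eq:moments-Y} alone'' is literally true but overstates the difficulty: the missing ingredient is Borell's lemma, not the full Paouris machinery.
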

For completeness, we provide a proof.

\begin{proof}
Set:
\[
\eps_{\n,\alpha} := \min\brac{1,\frac{2^{\frac{\alpha+2}{\alpha+1}} C}{\n^{\frac{\alpha}{2(\alpha+1)}}}} ~,
\]
and note that there exists a constant $t_0 \in (0,1]$, so that:
\begin{eqnarray}
\label{eq:p-t}
\forall t \in (\eps_{\n,\alpha} , t_0 ] & & \exists p_1 \in (4,c \n^{\alpha/2}] \;\;\; \text{such that} \;\;\;
t = 2 C \frac{(p_1-2)^{\frac{1}{\alpha+1}}}{\n^{\frac{\alpha}{2(\alpha+1)}}} ~, \\
\label{eq:p-t-neg}
& & \!\!\!\!\! \exists p_2 \in [-c \n^{\alpha/2},0) \;\;\; \text{such that} \;\;\;
t = 2 C \frac{\abs{p_2-2}^{\frac{1}{\alpha+1}}}{\n^{\frac{\alpha}{2(\alpha+1)}}} ~.
\end{eqnarray}
Here $c,C>0$ are the two constants appearing in Theorem \ref{thm:moments-Y}, which guarantee that:
\[
\brac{1 - \frac{t}{2}} \sqrt{n} \leq (\E|Y|^{p_2})^{\frac{1}{p_2}} \leq (\E|Y|^{p_1})^{\frac{1}{p_1}} \leq \brac{1 + \frac{t}{2}} \sqrt{n} ~.
\]

Since $\frac{1+t}{1+t/2} \geq 1+t/3$ for $t \in [0,1]$, we obtain by the Markov--Chebyshev inequality:
\[
\P(|Y| \geq (1+t) \sqrt{n}) \leq \P(|Y| \geq (1+t/3) (\E|Y|^{p_1})^{\frac{1}{p_1}}) \leq (1 + t/3)^{-p_1} \leq \exp(-p_1 t/4) ~.
\]
Expressing $p_1$ as a function of $t$ for $t$ in the range specified in (\ref{eq:p-t}), and plugging this above, we obtain:
\[
\P(|Y| \geq (1+t) \sqrt{n}) \leq \exp(-c_1 \n^{\alpha/2} t^{2+\alpha}) \;\;\;
\forall t \in [\eps_{\n,\alpha},t_0] ~.
\]
To extend this estimate to the entire interval $[0,t_0]$, note that:
\[
\P(|Y| \geq (1+t) \sqrt{n}) \leq (1+t)^{-2} \leq \exp(-t/2) \;\;\; \forall t \in [0,\eps_{\n,\alpha}] ~,
\]
and so adjusting the constants appearing above:
\[ \P(|Y| \geq (1+t) \sqrt{n}) \leq \exp(- c_2 \n^{\alpha/2} t^{2+\alpha}) \;\;\; \forall t \in [0,t_0] ~.
\] Finally, a standard application of Borell's lemma \cite{Borell-logconcave} (e.g. as in \cite{Paouris-IsotropicTail}), ensures that:
\[
\P(|Y| \geq (1+t) \sqrt{n}) \leq \exp(- c_3 \n^{\alpha/2} t) \;\;\; \forall t \geq t_0 ~,
\]
concluding the proof of the positive deviation estimate (\ref{eq:large-deviation-Y}).

Similarly:
\[
\P(|Y| \leq (1-t) \sqrt{n}) \leq \P(|Y| \leq (1-t/2) (\E|Y|^{p_2})^{\frac{1}{p_2}}) \leq (1 - t/2)^{-p_2} \leq \exp(p_2 t/2) ~.
\]
Expressing $p_2$ as a function of $t$ for $t$ in the range specified in (\ref{eq:p-t-neg}), and plugging this above, we obtain:
\[
\P(|Y| \leq (1-t) \sqrt{n}) \leq C_2 \exp(-c \n^{\alpha/2} t^{2+\alpha}) \;\;\;
\forall t \in [\eps_{\n,\alpha},t_0] ~.
\]
Adjusting the value of $C_2$ above, the estimate extends to the entire range $t \in [0,t_0]$. Lastly, setting $p_3 = -c_3 \n^{\frac{\alpha}{2}}$ so that:
\[
(\E|Y|^{p_3})^{\frac{1}{p_3}} \geq \frac{1}{2} \sqrt{n} ~,
\]
we obtain for all $\eps \in (0,1/2)$:
\[
\P(|Y| \leq \eps \sqrt{n}) \leq \P(|Y| \leq 2\eps (\E|Y|^{p_3})^{\frac{1}{p_3}}) \leq (2 \eps)^{-p_3} = \exp\brac{-c_3 \n^{\frac{\alpha}{2}} \log\brac{\frac{1}{2\eps}}} ~.
\]
Adjusting all constants, the negative deviation estimate (\ref{eq:small-ball-Y}) follows.
\end{proof}

To conclude the proof of Theorems \ref{thm:intro-deviation} and \ref{thm:intro-moments}, we estimate the deviation of $AX$ by that of $Y$ exactly like Klartag \cite{KlartagCLP}. Indeed, according to the argument described in the proof of \cite[Proposition 4.1]{KlartagCLP}, we have:
\[
\P(|A X| \geq (1+t) \sqrt{n}) \leq C \P\brac{\frac{\abs{A X+G_n}}{\sqrt{2}} \geq \sqrt{\frac{(1+t)^2 + 1}{2}} \sqrt{n}} ~,
\]
and:
\[
\P(|A X| \leq (1-t) \sqrt{n}) \leq C \P\brac{\frac{\abs{A X+G_n}}{\sqrt{2}} \leq \sqrt{\frac{(1-t)^2 + 1}{2}} \sqrt{n}} ~,
\]
for some universal constant $C>1$. The deviation estimate (\ref{eq:deviation}) of Theorem \ref{thm:intro-deviation} immediately follows from the corresponding estimates of Theorem \ref{thm:deviation-Y}. However, the more refined deviation estimates (\ref{eq:large-deviation}) and (\ref{eq:small-ball}) do not follow:
(\ref{eq:large-deviation}) only follows up to the unnecessary constant $C$ in front of the estimate:
\begin{equation} \label{eq:up-almost}
\P(|A X| \geq (1+t) \sqrt{n}) \leq C \exp(-c \n^{\frac{\alpha}{2}} \min(t^{2+\alpha},t)) \;\;\; \forall t \geq 0 ~,
\end{equation}
and (\ref{eq:small-ball}) follows without the decay to $0$ as $t \rightarrow 1$:
\begin{equation} \label{eq:down-almost}
\P(|A X| \leq (1-t) \sqrt{n}) \leq C \exp(-c \n^{\frac{\alpha}{2}} t^{2+\alpha}) \;\;\; \forall t \in [0,1] ~.
\end{equation}

To resolve these last issues, we proceed as follows. The unnecessary constant $C>1$ in (\ref{eq:up-almost}) is easily removed e.g. by repeating the argument of Fleury from \cite{FleuryImprovedThinShell}. Indeed, when $p \geq 1$, by the symmetry and independence of $G_n$, convexity of $t \mapsto t^p$ and the Cauchy--Schwarz inequality, we have:
\begin{eqnarray*}
 \E|Y|^{2p} &=& E\brac{\frac{|AX + G_n|^2}{2}}^p = \frac{1}{2} \E \brac{\brac{\frac{|AX + G_n|^2}{2}}^p + \brac{\frac{|AX - G_n|^2}{2}}^p} \\
 &\geq& \E\brac{\frac{|AX|^2 + |G_n|^2}{2}}^p \geq \E |AX|^p |G_n|^p = \E|AX|^p \E|G_n|^p \\
 &\geq& \E|AX|^p (\E|G_n|^2)^{p/2} = n^{p/2} \E|AX|^p ~.
\end{eqnarray*}
Since $\E |AX|^2 = \E |Y|^2 = \norm{A}_{HS}^2 = n$, we deduce:
\begin{equation} \label{eq:reduction}
\frac{(\E|AX|^p)^{\frac{1}{p}}}{(\E|A X|^2)^{\frac{1}{2}}} \leq \brac{\frac{(\E|Y|^{2p})^{\frac{1}{2p}}}{(\E|Y|^2)^{\frac{1}{2}}}}^{2} \;\;\; \forall p \geq 1 ~.
\end{equation}
Consequently, the $p$-moment estimates of Theorem \ref{thm:moments-Y} hold equally true (after adjusting constants) with $Y$ replaced by $AX$, when $p \geq 3$.
In particular, the $p$-moment estimates (\ref{eq:moments}) of Theorem \ref{thm:intro-moments} for $p \geq c_1 \n^{\frac{\alpha}{2(\alpha+2)}}$ are obtained.
Repeating the relevant parts in the proof of Theorem \ref{thm:deviation-Y}, the desired positive deviation estimate (\ref{eq:large-deviation}) follows. Finally,
applying \cite[Lemma 6]{FleuryOnVarianceConjecture} to the deviation estimates of Theorem \ref{thm:intro-deviation}, the positive $p$-moment estimates are improved in the range $1 \leq p-2 \leq c_1 \n^{\frac{\alpha}{2(\alpha+2)}}$, obtaining the right-hand side of (\ref{eq:small-moments}); see also below for a sketch of an alternative derivation. This takes care of the positive moment and deviation estimates.

\medskip

Reducing from $AX$ to $Y$ the small-ball estimate (\ref{eq:small-ball}), or equivalently, the negative moment estimates of (\ref{eq:moments}), seems more involved, and further arguments are needed. We choose to bypass these here by simply employing Paouris' small-ball estimate (\ref{eq:Paouris-small-ball}), which together with (\ref{eq:down-almost}) yields for some $c_3 \leq 1$ the desired:
\begin{equation} \label{eq:small-ball-again}
\P(|AX| \leq (1-t) \sqrt{n}) \leq C_2 \exp(-c_2 \n^{\frac{\alpha}{2}} \max(t^{2+\alpha},\log\frac{c_3}{1-t}) ) \;\;\; \forall t \in [0,1] ~.
\end{equation}
The negative moment estimates of (\ref{eq:small-moments}) and (\ref{eq:moments}) then follow by integrating (\ref{eq:small-ball-again}) by parts.
Since the computation is not entirely straightforward when $1 \leq \abs{p-2} \leq c_1 \n^{\frac{\alpha}{2(\alpha+2)}}$, we sketch the argument, which is based on Fleury's derivation in \cite[Lemma 6]{FleuryOnVarianceConjecture} of \emph{positive} moment estimates from deviation estimates.
However, Fleury's technique does not seem to generalize to \emph{negative} moments, and so we provide an alternative proof, which is equally applicable to both positive and negative moments.

Denote $Z = |AX| / \sqrt{n}$, and note that $1 = \E Z^2 = \int_0^\infty \P(Z^2 \geq t) dt$. We consequently have for $p > 0$:
\begin{eqnarray*}
&  & \E Z^{-2p} = p \int_0^\infty t^{-(p+1)} \P(Z^2 \leq t) dt \\
&= & p \int_0^1 \P(Z^2 \leq t) (t^{-(p+1)} - 1) dt + p \int_0^1 (1 - \P(Z^2 \geq t)) dt + p \int_1^\infty (1 - \P(Z^2 \geq t)) t^{-(p+1)} dt \\
& = & p \int_1^\infty t^{-(p+1)} dt + p \int_0^1 \P(Z^2 \leq t) (t^{-(p+1)} - 1) dt + p \int_1^\infty \P(Z^2 \geq t) ( 1 - t^{-(p+1)}) dt \\
& = & 1 + p \int_0^1 \P(Z^2 \leq 1-s) ((1-s)^{-(p+1)} - 1) ds + p \int_0^\infty \P(Z^2 \geq 1+s) (1 - (1+s)^{-(p+1)}) ds ~.
\end{eqnarray*}
Assuming for simplicity that $p \geq 2$, we use (\ref{eq:small-ball-again}) to bound the first integral above, evaluating separately the intervals $[1-c_3^2/2,1]$, $[0,1/p]$ and $[1/p,1-c_3^2/2]$, and (\ref{eq:large-deviation}) to bound the second integral, evaluating separately the intervals $[0,1/p]$ and $[1/p,\infty)$. Using the obvious estimates:
\[
\P(Z^2 \leq 1-s) \leq \P(Z \leq 1-s/2) ~,~ \P(Z^2 \geq 1+s) \leq \P(Z \geq 1+c \min(s,s^{1/2})) ~ ~ \forall s \geq 0 ~ ;
\]
\[
(1-s)^{-(p+1)} - 1 \leq \begin{cases}   C p s & s \in [0,1/p] \\ \exp(C p s) & s \in [1/p,1/2] \end{cases} ~ , ~
1 - (1+s)^{-(p+1)} \leq \begin{cases} (p+1) s & s \in [0,1/p] \\ 1 & s \in [1/p,\infty) \end{cases} ~,
\]
we obtain:
\begin{eqnarray}
\label{eq:big-bound}
& & \E Z^{-2p} \leq 1 + p C_2 \int_0^{c_3^2/2} (\eps/c_3^2)^{\frac{c_2}{2} \n^{\frac{\alpha}{2}}} \eps^{-(p+1)}  d\eps \\
\nonumber &+& p^2 C_3 \int_{0}^{1/p} s \exp(-c_4 \n^{\frac{\alpha}{2}} s^{2+\alpha}) ds + p C_4 \int_{1/p}^{1 - c_3^2/2} \exp(-c_4 \n^{\frac{\alpha}{2}} s^{2+\alpha} + c_5 p s) ds \\
\nonumber &+& p^2 C_5 \int_0^{1/p} s \exp(-c_6 \n^{\frac{\alpha}{2}} s^{2+\alpha}) ds + p C_6 \int_{1/p}^\infty \exp(-c_6 \n^{\frac{\alpha}{2}} \min(s^{2+\alpha},s^{\frac{1}{2}})) ds ~.
\end{eqnarray}
When $2 \leq p \leq c_1 \n^{\frac{\alpha}{2(\alpha+2)}}$, this implies using $(1+2p x)^{\frac{1}{2p}} \leq 1+x$:
\begin{eqnarray*}
& & \brac{\E Z^{-2p}}^{\frac{1}{2p}} \leq 1 + C_2 2^{-c_7 \n^{\frac{\alpha}{2}}} + p C_7 \int_{0}^{\infty} s \exp(-c_7 \n^{\frac{\alpha}{2}} s^{2+\alpha}) ds \\
&+& C_8 \int_{1/p}^{\infty} \exp(-c_4 \n^{\frac{\alpha}{2}} s^{2+\alpha} + c_5 p s) ds + C_9 \int_{1/p}^\infty \exp(-c_6 \n^{\frac{\alpha}{2}} \min(s^{2+\alpha},s^{\frac{1}{2}})) ds ~.
\end{eqnarray*}
In this range of values for $p$, $1/p \geq c (p / \n^{\frac{\alpha}{2}})^{\frac{1}{1+\alpha}}$, and hence the integrand in the term involving $C_8$ is monotone decreasing.
A standard computation then confirms that, in this range, both integrals involving $C_8$ and $C_9$ are dominated by the one involving $C_7$, yielding the negative moment estimates of (\ref{eq:small-moments}); a similar argument does the job in the positive moment range. When $c_1 \n^{\frac{\alpha}{2(\alpha+2)}} \leq p \leq c_2 \n^{\alpha/2}$, we similarly verify from (\ref{eq:big-bound}) that:
\[
\brac{\E Z^{-2p}}^{\frac{1}{2p}} \leq \brac{C_{10} \frac{p}{\n^{\frac{\alpha}{2(\alpha+2)}}} + p C_4 \int_{-\infty}^{\infty} \exp(-c_4 \n^{\frac{\alpha}{2}} |s|^{2+\alpha} + c_5 p s) ds}^{\frac{1}{2p}} ~.
\]
Bounding the second (dominant) term using the Laplace method, we obtain the negative moment estimates of (\ref{eq:moments}), thereby concluding the proof of Theorem \ref{thm:intro-moments}.

\section*{Appendix}
\renewcommand{\thesection}{A}
\setcounter{thm}{0}
\setcounter{equation}{0}  \setcounter{subsection}{0}

In the Appendix, we prove several properties of the bodies $Z_q^+(K)$ (for $q \geq 1$) which are needed for the results of Section \ref{sec:Lip}.

Our main goal is to establish Proposition \ref{prop:Z-K-main}. For the proof, we require several lemmas.
Given $\theta \in S^{m-1}$, we denote $H_\theta^+ := \set{x \in \Real^m ; \scalar{x,\theta} \geq 0}$.

\begin{lem} \label{lem:A1}
Let $K$ denote a convex body in $\Real^m$, and given $\theta \in S^{m-1}$, denote $f_\theta = \pi_\theta \mathbf{1}_K$. Then:
\[
\brac{\frac{f_\theta(0)}{\norm{f_\theta}_{\infty}}}^{1/q} \brac{\frac{\Gamma(m)\Gamma(q+1)}{\Gamma(m+q+1)}}^{1/q}  h_K(\theta) \leq \frac{h_{Z_q^+(K)}(\theta)}{(2\vol(K \cap H_\theta^+))^{1/q}} \leq h_K(\theta) ~.
\]
\end{lem}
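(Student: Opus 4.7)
The plan is to reduce both inequalities to one-dimensional integrals involving the marginal $f_\theta(t) = \vol_{m-1}(K \cap (t\theta + \theta^\perp))$. By Fubini with respect to the orthogonal decomposition $\Real^m = \Real\theta \oplus \theta^\perp$,
\[
h_{Z_q^+(K)}(\theta)^q \;=\; 2\int_K \scalar{x,\theta}_+^q\, dx \;=\; 2\int_0^{h_K(\theta)} t^q f_\theta(t)\, dt,
\]
while $2\vol(K \cap H_\theta^+) = 2\int_0^{h_K(\theta)} f_\theta(t)\, dt$, and the support of $f_\theta$ on the positive axis is precisely $[0, h_K(\theta)]$. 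The upper bound is then immediate: since $t \leq h_K(\theta)$ throughout the support, $h_{Z_q^+(K)}(\theta)^q \leq h_K(\theta)^q \cdot 2\vol(K \cap H_\theta^+)$, and taking $q$-th roots gives the right-hand inequality.

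For the lower bound, the key input is Brunn's principle, which asserts that $g := f_\theta^{1/(m-1)}$ is concave on its support. Setting $h := h_K(\theta)$, the restriction of $g$ to $[0,h]$ is concave with $g(h) = 0$, so concavity applied between the endpoints yields
\[
g(t) \;\geq\; g(0)(1 - t/h) \;+\; g(h)(t/h) \;=\; g(0)(1-t/h) \qquad \forall t \in [0,h].
\]
Raising to the $(m-1)$-st power and integrating against $t^q$ collapses to a Beta integral:
\[
\int_0^h t^q f_\theta(t)\, dt \;\geq\; f_\theta(0)\, h^{q+1} B(q+1, m) \;=\; f_\theta(0)\, h^{q+1}\, \frac{\Gamma(q+1)\Gamma(m)}{\Gamma(q+m+1)}.
\]

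To finish, I would trade one factor of $h$ for $\vol(K \cap H_\theta^+)/\norm{f_\theta}_\infty$ via the trivial bound $\vol(K \cap H_\theta^+) = \int_0^h f_\theta \leq \norm{f_\theta}_\infty \cdot h$; plugging in and taking $q$-th roots delivers exactly the asserted lower bound. There is no real obstacle here: the content of the lemma is just the correct packaging of Brunn's concavity together with the Beta function identity, and the factor $f_\theta(0)/\norm{f_\theta}_\infty$ is precisely what measures how far the slice distribution is from being symmetric about the hyperplane through the origin (in the even case $K = -K$ one has $f_\theta(0) = \norm{f_\theta}_\infty$, recovering the standard form of this estimate).
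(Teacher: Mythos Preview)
Your proof is correct and follows essentially the same approach as the paper, which also invokes Brunn's concavity of $f_\theta^{1/(m-1)}$ (citing \cite[Lemma 4.1]{Paouris-Small-Diameter}) for the left inequality and notes the right one is immediate. One cosmetic point: the claim $g(h)=0$ can fail if $K$ has a facet on the supporting hyperplane, but since $g(h)\geq 0$ the inequality $g(t)\geq g(0)(1-t/h)$ still follows from concavity, so your argument is unaffected.
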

\begin{proof}
The right inequality is straightforward from the definitions. The left inequality is derived by following the proof of \cite[Lemma 4.1]{Paouris-Small-Diameter}, which uses
the fact that the $1/(m-1)$ power of any one-dimensional marginal of $K$ is a concave function.
\end{proof}

To control the left-most term in Lemma \ref{lem:A1}, we have:
\begin{lem} \label{lem:A2}
Let $\mu = f(x) dx$ denote a log-concave probability measure on $\Real$. Then for any $\eps > 0$:
\[
\eps \leq \int_{0}^{\infty} f(x) dx \leq 1-\eps \;\;\; \Rightarrow f(0) \geq \eps \norm{f}_\infty ~.
\]
\end{lem}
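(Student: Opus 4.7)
The plan is to exploit the unimodality of log-concave densities by sandwiching $f$ between exponentials on either side of its mode. Let $M = \norm{f}_\infty$, and let $x_0 \in \Real$ be a point at which $f$ attains its maximum (as a log-concave integrable function on $\Real$, we may take the upper semi-continuous representative of $f$, which is continuous on the interior of its support and achieves its supremum there). The hypothesis $\eps \leq \int_0^\infty f \leq 1-\eps$ is symmetric under the reflection $x \mapsto -x$, so I may assume without loss of generality that $x_0 \geq 0$. The case $x_0 = 0$ is trivial, so I take $x_0 > 0$; I may also assume $m := f(0) < M$, since otherwise $f(0) = M \geq \eps M$ is immediate.

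Set $\beta := \log(M/m)/x_0 > 0$. Concavity of $\log f$ applied to three ordered points $x < 0 < x_0$ says that the slope of $\log f$ between $x$ and $0$ is at least $\beta$, the slope between $0$ and $x_0$; equivalently, $\log f(x) \leq \log m + \beta x$, so $f(x) \leq m e^{\beta x}$ for all $x \leq 0$. Symmetrically, concavity on $[0,x_0]$ yields $\log f(x) \geq \log m + \beta x$ there, so $f(x) \geq m e^{\beta x}$ on that interval. Integrating these two pointwise bounds:
\[
\int_{-\infty}^0 f(x)\,dx \;\leq\; \frac{m}{\beta}, \qquad \int_0^\infty f(x)\,dx \;\geq\; \int_0^{x_0} f(x)\,dx \;\geq\; \frac{e^{\beta x_0}-1}{\beta}\,m \;=\; \frac{M-m}{\beta}.
\]

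Finally, I combine these with the hypothesis. Since $\int_{-\infty}^0 f = 1 - \int_0^\infty f \geq \eps$, the first display gives $m/\beta \geq \eps$; the upper bound $\int_0^\infty f \leq 1-\eps$ combined with the second display gives $(M-m)/\beta \leq 1-\eps$. Dividing produces
\[
\frac{m}{M-m} \;\geq\; \frac{\eps}{1-\eps},
\]
which rearranges to $f(0) = m \geq \eps M = \eps \norm{f}_\infty$. There is no substantive obstacle: the only point requiring a little care is the existence of a genuine mode $x_0$, which is standard for log-concave integrable densities on $\Real$; everything else is two applications of the basic log-concavity inequality together with the two-sided mass balance on the half-lines.
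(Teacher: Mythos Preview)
Your argument is correct. The one implicit point---that $f(0)>0$ so that $\beta$ is finite---is forced by the hypothesis: if $f(0)=0$ then the (interval) support of $f$ lies entirely in $[0,\infty)$ or in $(-\infty,0]$, contradicting $\eps\leq\int_0^\infty f\leq 1-\eps$.

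Your route differs from the paper's. You locate the mode $x_0$, compare $f$ to the exponential $m e^{\beta x}$ determined by the values $f(0)$ and $f(x_0)$, and integrate the resulting pointwise bounds on the two half-lines. The paper instead uses that the cumulative and survival functions $F(x)=\int_{-\infty}^x f$ and $G(x)=\int_x^\infty f$ are themselves log-concave (a Pr\'ekopa--Leindler-type fact), so that $f/F$ is non-increasing and $f/G$ is non-decreasing; this yields directly $f(x)\leq f(0)\max(F(x)/F(0),G(x)/G(0))\leq f(0)/\eps$ for every $x$. The paper's argument is slightly slicker in that it avoids singling out the mode and delivers the conclusion in one line, at the cost of invoking the log-concavity of the CDF. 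Your argument is more elementary---it uses only the raw concavity of $\log f$ and two explicit integrations---and has the pleasant feature of making the extremal behaviour (an exponential density) visible.
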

\noindent
This is essentially folklore (see e.g. \cite[Lemma 1.1]{Fradelizi-Habilitation}), but we include a proof for completeness. We refer the interested reader e.g. to \cite{Fradelizi-Habilitation} for the study of functional inequalities in the case of non-symmetric log-concave measures.
\begin{proof}
Let $F(x) = \int_{-\infty}^x f(t) dt$ and $G(x) = 1 - F(x) = \int_x^\infty f(t) dt$.
By the Pr\'ekopa--Leindler Theorem, both $F$ and $G$ are log-concave. Equivalently, this means that both $f / F$ and $-f / G$ are non-increasing.
Consequently $f(x) \leq f(y) \max(F(x)/F(y) , G(x)/G(y))$ for all $x,y \in \Real$. Using $y=0$ and the assumption that $F(0),G(0) \geq \eps$, the conclusion immediately follows.
\end{proof}

This reduces our task to showing:
\begin{lem} \label{lem:A3}
If $w$ is a log-concave function on $\Real^m$ with barycenter at the origin, then:
\[
\forall \theta \in S^{m-1} \;\;\;  \brac{\frac{\vol(K_{m+q}(w) \cap H^+_\theta)}{\vol(K_{m+q}(w))}}^{1/q} \geq c > 0 ~.
\]
\end{lem}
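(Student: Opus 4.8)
The plan is to show that the body $K := K_{m+q}(w)$ is ``balanced'' around every hyperplane through the origin, in the sense that each half contains a fixed proportion of the total volume. The key obstruction here is precisely that the barycenter of $K$ need not be at the origin even though the barycenter of $w$ is; so one cannot invoke Gr\"unbaum's lemma (Lemma \ref{lem:barycenter}) for $K$ directly. Instead, I would exploit the relation $Z^+_q(K_{m+q}(w)) = Z^+_q(w)$ from (\ref{eq:ZK-Z}), which transfers the barycenter information from $w$ to $K$ at the level of the one-sided centroid bodies rather than the bodies themselves.

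First I would fix $\theta \in S^{m-1}$ and write $a := \vol(K \cap H^+_\theta)/\vol(K)$, so that $1-a = \vol(K \cap H^-_\theta)/\vol(K)$; the goal is a lower bound $a \geq c$ (and by symmetry of the argument applied to $-\theta$, also $1-a \geq c$, but it is really the lower bound on $a$ that is needed uniformly in $\theta$). The natural quantity to compare against is $h_{Z^+_q(K)}(\theta) = \big(2\int_{\Real^m}\scalar{x,\theta}_+^q \mathbf{1}_K(x)\,dx\big)^{1/q}$. On one hand, Lemma \ref{lem:A1} applied to $K$ gives a two-sided estimate for $h_{Z^+_q(K)}(\theta)$ in terms of $h_K(\theta)$, the support function in direction $\theta$, and the ``slab densities'' $f_\theta(0)/\norm{f_\theta}_\infty$ of the marginal of $\mathbf 1_K$; in particular the upper bound $h_{Z^+_q(K)}(\theta) \le (2\vol(K\cap H^+_\theta))^{1/q} h_K(\theta)$ already extracts a factor $\vol(K \cap H^+_\theta)^{1/q}$. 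On the other hand, by (\ref{eq:ZK-Z}) this same support function equals $h_{Z^+_q(w)}(\theta)$, and by (\ref{eq:Z^+_q-prop}) together with the normalization $c B_2^m \subset Z^+_2(w) \subset \sqrt2 B_2^m$ in the isotropic case — or more generally by comparing $Z^+_q(w)$ to $Z^+_{\max(q,2)}(w)$ and $Z^+_{\min(q,2)}(w)$ using (\ref{eq:Z^+_q-prop}) — one controls $h_{Z^+_q(w)}(\theta)$ from below by something of order the $q$-th moment radius of $w$. The point is that $h_{Z^+_q(w)}(\theta)$ cannot be too small relative to $h_{Z^+_q(K)}(\theta_{\max})$ in ``most'' directions, and the one-sided nature (via Gr\"unbaum for the one-dimensional marginals $\scalar{w,\theta}$) prevents it from degenerating in \emph{any} direction.

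The cleanest route, I expect, is the following comparison chain. Apply Lemma \ref{lem:A1} once more but now in the direction $-\theta$ to get $h_{Z^+_q(K)}(-\theta) \le (2\vol(K\cap H^-_\theta))^{1/q} h_K(-\theta) = (2\vol(K)(1-a))^{1/q} h_K(-\theta)$, and use the lower bound half of Lemma \ref{lem:A1} in direction $\theta$ to get $h_{Z^+_q(K)}(\theta) \ge c_1 (\vol(K) a)^{1/q} h_K(\theta)$ for a constant $c_1$ depending on the worst-case slab ratio, which by Lemma \ref{lem:A2} applied to the (log-concave, probability, after normalization) one-dimensional marginal of $\mathbf 1_K$ is bounded below once we know that marginal splits its mass non-trivially — and \emph{that} is where the barycenter of $w$ (not of $K$) enters, through (\ref{eq:ZK-Z}) and Gr\"unbaum's lemma for the density $\scalar{\cdot,\theta}_\# w$ on $\Real$. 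Combining, $a/(1-a) \ge c_2\, \big(h_{Z^+_q(K)}(\theta)/h_{Z^+_q(K)}(-\theta)\big)^q \cdot \big(h_K(-\theta)/h_K(\theta)\big)^q$, and then (\ref{eq:ZK-Z}) plus (\ref{eq:Z^+_q-prop}) bound the first factor below by a universal constant, while the ratio $h_K(-\theta)/h_K(\theta)$ is bounded below by a universal constant by yet another application of the same circle of ideas (the body $K_{m+q}(w)$ has bounded ``asymmetry'' in every direction, again a consequence of Gr\"unbaum for the marginals of $w$). Rearranging $a/(1-a) \ge c_3$ gives $a \ge c_3/(1+c_3) =: c > 0$, uniformly in $\theta$, which is exactly the claim.

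The main obstacle, as flagged, is keeping the barycenter bookkeeping honest: every step must route the origin-symmetry-type information through $w$ or through the one-dimensional marginals $\scalar{\cdot,\theta}_\# w$ (where Gr\"unbaum's Lemma \ref{lem:barycenter} is available), never through $K_{m+q}(w)$ itself. A secondary technical point is that $h_{k,p}$-style constants from Lemma \ref{lem:A1} involve the ratio $f_\theta(0)/\norm{f_\theta}_\infty$ for the \emph{characteristic function} of $K$, not for a log-concave density; but $\pi_\theta \mathbf 1_K$ has the property that its $1/(m-1)$-power is concave (as used in the proof of Lemma \ref{lem:A1}), which is more than enough to run Lemma \ref{lem:A2}-type reasoning, so this is routine. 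Once these pieces are assembled, the constant $c$ is genuinely universal (independent of $m$, $q$, $w$, and $\theta$), which is what the downstream use in Proposition \ref{prop:Z-K-main} requires.
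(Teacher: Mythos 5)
Your chain does not close, and in fact the statement you end up deriving is false. The two ``universal constant'' claims on which the argument rests fail: neither $h_{Z^+_q(K)}(\theta)/h_{Z^+_q(K)}(-\theta)$ nor $h_K(-\theta)/h_K(\theta)$ is bounded below by a universal constant. By (\ref{eq:Z^+_q-prop}) the best general bound on the first ratio degrades like $1/q$ (compare both sides to $Z^+_1(w)$, whose two one-sided moments coincide since the barycenter is at the origin), and this is sharp: for $m=1$ and $w(t)=e^{-t-1}\mathbf{1}_{[-1,\infty)}(t)$ (log-concave, barycenter at the origin) one has $h_{K_{1+q}(w)}(1)\asymp q$, $h_{K_{1+q}(w)}(-1)\asymp 1$, $h_{Z^+_q(w)}(1)\asymp q$, $h_{Z^+_q(w)}(-1)\asymp 1$, so in the direction $\theta=-1$ both of your ratios are of order $1/q$, and moreover $\vol(K_{1+q}(w)\cap H^+_\theta)/\vol(K_{1+q}(w))\asymp 1/q$. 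Thus your conclusion $a\geq c_3/(1+c_3)$ uniformly in $\theta$ and $q$ is false; only $a\geq c^q$ holds, which is exactly what the lemma asserts. There are further losses you treat as constants: with the pairing of the two halves of Lemma \ref{lem:A1} that actually yields a \emph{lower} bound on $a/(1-a)$ (the pairing you describe --- lower bound in direction $\theta$, upper bound in direction $-\theta$ --- gives an \emph{upper} bound), the constant $c_2$ carries the factor $\Gamma(m)\Gamma(q+1)/\Gamma(m+q+1)$, whose $1/q$-th root is of order $q/(m+q)$, again not universal. Tracing everything through, the inequality you obtain is essentially tautological: Lemma \ref{lem:A1} applied in the two directions $\pm\theta$ relates $a/(1-a)$ back to the very support-function ratios you would need to control independently, and you have no independent control of them.

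The deeper issue is that the argument never supplies the transfer from ``the barycenter of $w$ is at the origin'' to ``$K_{m+q}(w)$ has non-negligible volume on each side of $\theta^\perp$''; you gesture at routing this through (\ref{eq:ZK-Z}) and Gr\"unbaum, but that routing is precisely the content of Lemma \ref{lem:A3}, so as written the plan is circular --- in particular, invoking Lemma \ref{lem:A2} for $\pi_\theta\mathbf{1}_K$ already presupposes that the marginal of $K$ splits its mass non-trivially at $0$, which is (a form of) the statement being proved. The missing mechanism, and the key idea of the paper's proof, is the polar-coordinate identity at the exponent $q=m$: after normalizing $w(0)=1$, $\int w=1$, one has $\vol(K_m(w)\cap H^+_\theta)=\frac1m\int_{S^{m-1}\cap H^+_\theta}\norm{\xi}_{K_m(w)}^{-m}d\xi=\int_{H^+_\theta}w(x)\,dx=\P(W_1\geq 0)\geq 1/e$ by Gr\"unbaum (Lemma \ref{lem:barycenter}) applied to the log-concave marginal $\pi_\theta w$; one then passes from $K_m(w)$ to $K_{m+q}(w)$ via the sandwich (\ref{eq:K_q-prop}), whose multiplicative gap is only $C^q$ and hence harmless after taking the $1/q$-th root. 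Neither this identity nor any substitute for it appears in your proposal, and without it the slab ratio $f_\theta(0)/\norm{f_\theta}_\infty$ and the half-space volume split of $K_{m+q}(w)$ remain uncontrolled.
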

\begin{proof}
Note that we may normalize and rescale so that $w(0) = 1$ and $\int_{\Real^m} w(x) dx = 1$. Using polar-coordinates, we have for any convex (in fact, star-shaped) body $K$ containing the origin:
\begin{equation} \label{eq:polar1}
\vol(K \cap H_\theta^+) = \frac{1}{m} \int_{S^{m-1} \cap H_\theta^+} \norm{\xi}_{K}^{-m} d\xi ~.
\end{equation}
Using (\ref{eq:K_q-prop}), we see that:
\[
\forall \xi \in S^{m-1}  \;\;\; e^{-\frac{mq}{m+q}} \norm{\xi}_{K_{m}(w)}^{-m} \leq \norm{\xi}_{K_{m+q}(w)}^{-m} \leq \frac{\Gamma(m+q+1)^{\frac{m}{m+q}}}{\Gamma(m+1)} \norm{\xi}_{K_{m}(w)}^{-m} ~.
\]
Plugging this into (\ref{eq:polar1}) and using Stirling's formula, we verify that:
\begin{equation} \label{eq:K-ratio}
\forall \theta \in S^{m-1}  \;\;\; e^{-q} \leq \frac{\vol(K_{m+q}(w) \cap H_\theta^+)}{\vol(K_{m}(w) \cap H_\theta^+)} \leq C^q ~.
\end{equation}

Using (\ref{eq:polar1}), the definition of $K_m(w)$ and polar-coordinates again, we see that $\vol(K_m(w) \cap H_\theta^+) = \int_{H_\theta^+} w(x) dx = \P(W_1 \geq 0)$, where $W_1$ is the random variable on $\Real$ having density $\pi_\theta w$. Since this density is log-concave by the Pr\'ekopa--Leindler Theorem, and since the barycenter of $W_1$ is at the origin, Lemma \ref{lem:barycenter} implies that:
\begin{equation} \label{eq:lem-bary-again}
\frac{\vol(K_{m}(w) \cap H_\theta^+)}{\vol(K_{m}(w))} \geq \frac{1}{e} ~.
\end{equation}
Now decomposing $\vol = \vol|_{H_\theta^+} + \vol|_{H_{-\theta}^+}$, (\ref{eq:K-ratio}) and (\ref{eq:lem-bary-again}) imply the assertion.
\end{proof}

\begin{proof}[Proof of Proposition \ref{prop:Z-K-main}]
Applying Lemma \ref{lem:A1} with $K = K_{m+q}(w)$ and using Lemma \ref{lem:A3}, we obtain for all $\theta \in S^{m-1}$:
\[
c \brac{\frac{f_\theta(0)}{\norm{f_\theta}_{\infty}}}^{1/q} \brac{\frac{\Gamma(m)\Gamma(q+1)}{\Gamma(m+q+1)}}^{1/q} \leq \vol(K_{m+q}(w))^{-1/q} \frac{h_{Z_q^+(K_{m+q}(w))}(\theta)}{h_{K_{m+q}(w)}(\theta)} \leq C ~.
\]
Lemma \ref{lem:A2} together with Lemma \ref{lem:A3} imply that:
\[
\forall \theta \in S^{m-1} \;\;\; \brac{\frac{f_\theta(0)}{\norm{f_\theta}_{\infty}}}^{1/q} \geq c' > 0 ~,
\]
and hence:
\[
c'' \brac{\frac{\Gamma(m)\Gamma(q+1)}{\Gamma(m+q+1)}}^{1/q} K_{m+q}(w) \subset \vol(K_{m+q}(w))^{-1/q} Z_q^+(K_{m+q}(w)) \subset C K_{m+q}(w) ~.
\]
Rearranging terms, the assertion of Proposition \ref{prop:Z-K-main} follows.
\end{proof}

Finally, we prove:
\begin{lem} \label{lem:Z^+_2}
If $g : \Real^m \rightarrow \Real_+$ is a log-concave isotropic density then $Z^+_2(g) \supset c B_2^m$.
\end{lem}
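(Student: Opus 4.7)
The plan is to unwind the definition of $Z^+_2(g)$ and reduce the claim to a one-dimensional statement about log-concave marginals. By definition,
\[
h_{Z^+_2(g)}(\theta)^2 \;=\; 2\int_{\Real^m} \scalar{x,\theta}_+^{\,2}\, g(x)\,dx \;=\; 2\,\E\, (X_\theta)_+^{\,2},
\]
where $X$ has density $g$ and $X_\theta := \scalar{X,\theta}$. Since $g$ is isotropic, $\E X_\theta = 0$ and $\E X_\theta^{\,2} = 1$, and by Pr\'ekopa--Leindler the density of $X_\theta$ is log-concave on $\Real$. So it suffices to show that for every log-concave probability density on $\Real$ with mean zero and variance one, $\E (X_\theta)_+^{\,2} \geq c$ for some universal $c>0$.

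Next I would introduce $M := \E (X_\theta)_+$. The centering condition $\E X_\theta = 0$ gives $\E (X_\theta)_- = M$ as well, so $\E|X_\theta| = 2M$. Now I apply the one-dimensional moment comparison (\ref{eq:Z_q-prop}) (a consequence of Berwald's inequality / Borell's lemma, already invoked in the paper) with $q_1 = 1$, $q_2 = 2$: for any one-dimensional log-concave density,
\[
\bigl(\E X_\theta^{\,2}\bigr)^{1/2} \;\leq\; C\, \E|X_\theta|.
\]
Plugging in $\E X_\theta^{\,2} = 1$ and $\E|X_\theta| = 2M$ yields $M \geq 1/(2C)$.

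Finally, Cauchy--Schwarz applied to the nonnegative random variable $(X_\theta)_+$ gives
\[
M^2 \;=\; \bigl(\E(X_\theta)_+\bigr)^2 \;\leq\; \E(X_\theta)_+^{\,2},
\]
so $\E(X_\theta)_+^{\,2} \geq 1/(4C^2)$, which translates into $h_{Z^+_2(g)}(\theta) \geq 1/(C\sqrt{2})$ uniformly in $\theta \in S^{m-1}$, i.e.\ $Z^+_2(g) \supset c B_2^m$. I do not anticipate any real obstacle: the only nontrivial input is the moment comparison, which the paper has already established in full generality (without requiring the density to be even), and the rest is a direct Cauchy--Schwarz argument combined with the isotropy normalization.
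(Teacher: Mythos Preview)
Your argument is correct and in fact simpler than the paper's. You reduce to the one-dimensional statement that a centered log-concave real variable with unit variance satisfies $\E(X_\theta)_+^2 \geq c$, and you obtain this in two strokes: the centering identity $\E|X_\theta| = 2\E(X_\theta)_+$ combined with the Berwald/Borell moment comparison~(\ref{eq:Z_q-prop}) gives a lower bound on $\E(X_\theta)_+$, and then the trivial inequality $(\E Y)^2 \leq \E Y^2$ finishes. The paper proceeds differently: it invokes the $K_q$-body comparison~(\ref{eq:K_q-prop}) (with $m=1$, $q_1=1$, $q_2=3$) to relate $\int_0^\infty t^2 g_0(t)\,dt$ to $\bigl(\int_0^\infty g_0\bigr)^3 / g_0(0)^2$, then uses the reverse inclusion of~(\ref{eq:K_q-prop}) in both directions together with Gr\"unbaum's Lemma~\ref{lem:barycenter} to bound $\bigl(\int_0^\infty g_0\bigr)^3 / g_0(0)^2$ from below.

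Your route is more direct and avoids both the $K_q$ machinery and the explicit appeal to Gr\"unbaum's lemma; the only nontrivial ingredient is the $L_1$--$L_2$ moment comparison for log-concave densities, which the paper has already recorded. The paper's route, on the other hand, stays within the $K_q/Z_q^+$ framework developed in Section~\ref{sec:Lip} and produces an explicit numerical value for~$c$.
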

\begin{proof}
Given $\theta \in S^{n-1}$, denote $g_0 := \pi_{\theta} g$; as usual, it is an isotropic log-concave probability density on $\Real$.
Comparing moments using the left-hand side of (\ref{eq:K_q-prop}) with $m=1$, $q_1=1$ and $q_2 = 3$, we obtain:
\begin{equation} \label{eq:triv1}
3 \int_0^\infty t^2 g_0(t) dt \geq \frac{\brac{\int_0^\infty g_0(t) dt}^3}{e^2 g_0(0)^2} ~.
\end{equation}
Applying now the reverse comparison using the right-hand side of (\ref{eq:K_q-prop}) for both directions $\theta$ and $-\theta$, and summing the resulting estimates, we obtain:
\begin{equation} \label{eq:triv2}
3 = 3 \int_{-\infty}^\infty t^2 g_0(t) dt \leq \frac{\Gamma(4)}{g_0(0)^2} \brac{\brac{\int_0^\infty g_0(t) dt}^3 + \brac{\int_{-\infty}^0 g_0(t) dt}^3} ~.
\end{equation}
Since the barycenter of $g_0$ is at the origin, we know by Lemma \ref{lem:barycenter} that:
\[
\int_{-\infty}^0 g_0(t) dt \leq (e-1) \int_0^\infty g_0(t) dt ~,
\]
and so we conclude from (\ref{eq:triv2}) that:
\[
\frac{\brac{\int_0^\infty g_0(t) dt}^3}{g_0(0)^2} \geq \frac{3}{\Gamma(4)(1 + (e-1)^3)} ~.
\]
Together with (\ref{eq:triv1}), the assertion follows with e.g. $c = (3 e^2 (1 + (e-1)^3))^{-1/2}$.
\end{proof}

\bibliographystyle{plain}

\begin{thebibliography}{10}

\bibitem{ABP}
M.~Anttila, K.~Ball, and I.~Perissinaki.
\newblock The central limit problem for convex bodies.
\newblock {\em Trans. Amer. Math. Soc.}, 355(12):4723--4735, 2003.

\bibitem{BakryEmery}
D.~Bakry and M.~{\'E}mery.
\newblock Diffusions hypercontractives.
\newblock In {\em S\'eminaire de probabilit\'es, XIX, 1983/84}, volume 1123 of
  {\em Lecture Notes in Math.}, pages 177--206. Springer, Berlin, 1985.

\bibitem{Ball-kdim-sections}
K.~Ball.
\newblock Logarithmically concave functions and sections of convex sets in
  $\mathbb{R}^n$.
\newblock {\em Studia Math.}, 88(1):69--84, 1988.

\bibitem{BarlowMarshallProschan}
R.~E. Barlow, A.~W. Marshall, and F.~Proschan.
\newblock Properties of probability distributions with monotone hazard rate.
\newblock {\em Ann. Math. Statist.}, 34:375--389, 1963.

\bibitem{BerwaldMomentComparison}
L.~Berwald.
\newblock Verallgemeinerung eines {M}ittelwertsatzes von {J}. {F}avard f\"ur
  positive konkave {F}unktionen.
\newblock {\em Acta Math.}, 79:17--37, 1947.

\bibitem{BobkovVarianceBound}
S.~Bobkov.
\newblock On isoperimetric constants for log-concave probability distributions.
\newblock In {\em Geometric aspects of functional analysis, Israel Seminar
  2004-2005}, volume 1910 of {\em Lecture Notes in Math.}, pages 81--88.
  Springer, Berlin, 2007.

\bibitem{Bobkov-GaussianMarginals}
S.~G. Bobkov.
\newblock On concentration of distributions of random weighted sums.
\newblock {\em Ann. Probab.}, 31(1):195--215, 2003.

\bibitem{Bobkov-SpectralGapForSphericallySymmetric}
S.~G. Bobkov.
\newblock Spectral gap and concentration for some spherically symmetric
  probability measures.
\newblock In {\em Geometric aspects of functional analysis}, volume 1807 of
  {\em Lecture Notes in Math.}, pages 37--43. Springer, Berlin, 2003.

\bibitem{BobkovKoldobsky}
S.~G. Bobkov and A.~Koldobsky.
\newblock On the central limit property of convex bodies.
\newblock In {\em Geometric aspects of functional analysis}, volume 1807 of
  {\em Lecture Notes in Math.}, pages 44--52. Springer, Berlin, 2003.

\bibitem{Bobkov-Nazarov}
S.~G. Bobkov and F.~L. Nazarov.
\newblock On convex bodies and log-concave probability measures with
  unconditional basis.
\newblock In {\em Geometric Aspects of Functional Analysis}, volume 1807 of
  {\em Lecture Notes in Mathematics}, pages 53--69. Springer, 2001-2002.

\bibitem{BorellLyapunov}
Ch. Borell.
\newblock Complements of {L}yapunov's inequality.
\newblock {\em Math. Ann.}, 205:323--331, 1973.

\bibitem{Borell-logconcave}
Ch. Borell.
\newblock Convex measures on locally convex spaces.
\newblock {\em Ark. Mat.}, 12:239--252, 1974.

\bibitem{DafnisPaouris}
N.~Dafnis and G.~Paouris.
\newblock Small ball probability estimates, {$\psi_2$}-behavior and the
  hyperplane conjecture.
\newblock {\em J. Funct. Anal.}, 258(6):1933--1964, 2010.

\bibitem{FleuryOnVarianceConjecture}
B.~Fleury.
\newblock Between {P}aouris concentration inequality and variance conjecture.
\newblock {\em Ann. Inst. Henri Poincar\'e Probab. Stat.}, 46(2):299--312,
  2010.

\bibitem{FleuryImprovedThinShell}
B.~Fleury.
\newblock Concentration in a thin euclidean shell for log-concave measures.
\newblock {\em J. Func. Anal.}, 259:832--841, 2010.

\bibitem{FleuryGuedonPaourisCLP}
B.~Fleury, O.~Gu{\'e}don, and G.~Paouris.
\newblock A stability result for mean width of $l_p$-centroid bodies.
\newblock {\em Advances in Mathematics}, 214(2):865--877, 2007.

\bibitem{Fradelizi-Habilitation}
M.~Fradelizi.
\newblock Contributions \`a la g\'eom\'etrie des convexes. {M}\'ethodes
  fonctionnelles et probabilistes. {H}abilitation \`a {D}iriger des
  {R}echerches de l'{U}niversit\'e {P}aris-{E}st {M}arne {L}a {V}all\'ee, 2008.
\newblock http://perso-math.univ-mlv.fr/users/fradelizi.matthieu/pdf/HDR.pdf.

\bibitem{GardnerSurveyInBAMS}
R.~J. Gardner.
\newblock The {B}runn-{M}inkowski inequality.
\newblock {\em Bull. Amer. Math. Soc. (N.S.)}, 39(3):355--405, 2002.

\bibitem{GrunbaumSymmetry}
B.~Gr{\"u}nbaum.
\newblock Partitions of mass-distributions and of convex bodies by hyperplanes.
\newblock {\em Pacific J. Math.}, 10:1257--1261, 1960.

\bibitem{HaberlLpIntersectionBodies}
C.~Haberl.
\newblock {$L_p$} intersection bodies.
\newblock {\em Adv. Math.}, 217(6):2599--2624, 2008.

\bibitem{KLS}
R.~Kannan, L.~Lov{\'a}sz, and M.~Simonovits.
\newblock Isoperimetric problems for convex bodies and a localization lemma.
\newblock {\em Discrete Comput. Geom.}, 13(3-4):541--559, 1995.

\bibitem{KlartagPerturbationsWithBoundedLK}
B.~Klartag.
\newblock On convex perturbations with a bounded isotropic constant.
\newblock {\em Geom. and Funct. Anal.}, 16(6):1274--1290, 2006.

\bibitem{KlartagCLP}
B.~Klartag.
\newblock A central limit theorem for convex sets.
\newblock {\em Invent. Math.}, 168:91--131, 2007.

\bibitem{KlartagCLPpolynomial}
B.~Klartag.
\newblock Power-law estimates for the central limit theorem for convex sets.
\newblock {\em J. Funct. Anal.}, 245:284--310, 2007.

\bibitem{KlartagUnconditionalVariance}
B.~Klartag.
\newblock A {B}erry-{E}sseen type inequality for convex bodies with an
  unconditional basis.
\newblock {\em Probab. Theory Related Fields}, 45(1):1--33, 2009.

\bibitem{Ledoux-Book}
M.~Ledoux.
\newblock {\em The concentration of measure phenomenon}, volume~89 of {\em
  Mathematical Surveys and Monographs}.
\newblock American Mathematical Society, Providence, RI, 2001.

\bibitem{LutwakZhang-IntroduceLqCentroidBodies}
E.~Lutwak and G.~Zhang.
\newblock Blaschke-{S}antal\'o inequalities.
\newblock {\em J. Differential Geom.}, 47(1):1--16, 1997.

\bibitem{EMilman-Gaussian-Marginals}
E.~Milman.
\newblock On gaussian marginals of uniformly convex bodies.
\newblock {\em J. Theoret. Prob.}, 22(1):256--278, 2009.

\bibitem{EMilman-RoleOfConvexity}
E.~Milman.
\newblock On the role of convexity in isoperimetry, spectral-gap and
  concentration.
\newblock {\em Invent. Math.}, 177(1):1--43, 2009.

\bibitem{VMilman-DvoretzkyTheorem}
V.~D. Milman.
\newblock A new proof of {A}. {D}voretzky's theorem on cross-sections of convex
  bodies.
\newblock {\em Funkcional. Anal. i Prilo\v zen.}, 5(4):28--37, 1971.

\bibitem{Milman-Pajor-LK}
V.~D. Milman and A.~Pajor.
\newblock Isotropic position and interia ellipsoids and zonoids of the unit
  ball of a normed $n$-dimensional space.
\newblock In {\em Geometric Aspects of Functional Analysis}, volume 1376 of
  {\em Lecture Notes in Mathematics}, pages 64--104. Springer-Verlag,
  1987-1988.

\bibitem{Milman-Schechtman-Book}
V.~D. Milman and G.~Schechtman.
\newblock {\em Asymptotic theory of finite-dimensional normed spaces}, volume
  1200 of {\em Lecture Notes in Mathematics}.
\newblock Springer-Verlag, Berlin, 1986.
\newblock With an appendix by M. Gromov.

\bibitem{Paouris-Small-Diameter}
G.~Paouris.
\newblock $\psi_2$-estimates for linear functionals on zonoids.
\newblock In {\em Geometric Aspects of Functional Analysis}, volume 1807 of
  {\em Lecture Notes in Mathematics}, pages 211--222. Springer, 2001-2002.

\bibitem{Paouris-IsotropicTail}
G.~Paouris.
\newblock Concentration of mass on convex bodies.
\newblock {\em Geom. Funct. Anal.}, 16(5):1021--1049, 2006.

\bibitem{PaourisSmallBall}
G.~Paouris.
\newblock Small ball probability estimates for log-concave measures.
\newblock To appear in Trans. Amer. Math. Soc., 2010.

\end{thebibliography}

\def\cprime{$'$}

\end{document}